\documentclass[reqno,11pt,letterpaper]{amsart}

\usepackage[
letterpaper,
textwidth=6.0in,
textheight=8.8in,
centering
]{geometry}

\usepackage{amsmath, amssymb, amsthm, amsfonts}
\usepackage{mathrsfs}
\usepackage[dvipsnames]{xcolor}
\usepackage{enumerate}
\usepackage{graphicx}
\usepackage{etoolbox}
\usepackage{bm}
\usepackage{tikz}

\usepackage[T1]{fontenc}
\usepackage{lmodern} 

\newtheorem{theorem}{Theorem}[section]
\newtheorem{lemma}[theorem]{Lemma}

\newtheorem{proposition}[theorem]{Proposition}
\newtheorem{remark}[theorem]{Remark}
\newtheorem{problem}[theorem]{Problem}

\theoremstyle{definition}

\numberwithin{equation}{section}

\usepackage[numbers]{natbib}

\BeforeBeginEnvironment{thebibliography}{%
\small
\setlength{\baselineskip}{1\baselineskip}
}

\usepackage[colorlinks]{hyperref}

\newcommand{\lrb}[1]{\left( #1 \right)}
\newcommand{\lrs}[1]{\left[ #1 \right]}

\def\p{\partial}

\def\ga{\gamma}
\def\Ga{\Gamma}
\def\om{\omega}
\def\al{\alpha}
\def\si{\sigma}
\def\de{\delta}
\def\la{\lambda}

\newcommand{\ml}[1]{\mathcal{#1}}

\newcommand{\mk}[1]{\mathfrak{#1}}

\def\n{\bm{n}}
\def\x{\bm{x}}
\def\y{\bm{y}}

\def\u{\bm{u}}

\def\w{\bm{w}}

\def\bU{\bar{U}}
\def\bu{\bar{u}}
\def\bc{\bar{c}}
\def\bB{\bar{B}}
\def\bp{\bar{p}}
\def\bS{\bar{S}}
\def\bM{\bar{M}}
\def\brho{\bar{\rho}}
\def\bxi{\bar{\xi}}

\def\dN{\dot{\ml{N}}}
\def\dU{\dot{U}}
\def\du{\dot{u}}
\def\dB{\dot{B}}
\def\dS{\dot{S}}

\def\div{\mathrm{div\,}}
\def\curl{\mathrm{curl\,}}

\begin{document}

\title[Stability and location of steady transonic shocks]
{Stability of steady transonic shocks with constant states in 3D rectangular nozzles under small external forces }

\author[S.K. Weng]{Shangkun Weng}
\address[Shangkun Weng]{\newline School of Mathematics and Statistics,
Wuhan University, Wuhan, Hubei 430072, People's Republic of China}
\email{skweng@whu.edu.cn}

\author[W.G. Yang]{Wengang Yang}
\address[Wengang Yang]{\newline School of Mathematics and Statistics, Hubei Key Laboratory of Mathematics \& Intelligent Computation and Interdisciplinary Sciences, Wuhan University of Technology, Wuhan, 430070, People’s Republic of China.}
\email{yangwg@whut.edu.cn}

\begin{abstract}
This paper investigates the stability of steady transonic shocks with constant states in rectangular nozzles under small three dimensional perturbations of the external force and the exit pressure. Mathematically, the problem can be formulated as a nonlinear free boundary problem for a hyperbolic-elliptic coupled PDE system. One of the key difficulties lies in the arbitrary location of the background shock, which may lead to multiple shock solutions for
the same external force and exit pressure under perturbations,
revealing the instability of the background shock. We reformulate the steady Euler equations in terms of deformation and vorticity to effectively decompose the hyperbolic and elliptic modes. Linearizing the reformulated system and the Rankine-Hugoniot conditions at background transonic shocks, we find that the class of nonzero horizontal external forces helps to uniquely determine the shock location, where the conservation of mass plays a critical role. A delicate iteration is developed to establish the existence and structural stability of transonic shock solutions. Our analysis shows that the stability mechanisms of steady transonic shocks in flat and divergent nozzles are quite different in many aspects.
\end{abstract}

\keywords{steady compressible Euler equations, three-dimensional
transonic shock, external force, free boundary, deformation-curl
decomposition}
\subjclass[2020]{35Q31, 35L65, 35J57, 35R35, 76N10}
\date{}
\maketitle

\section{Introduction}
We consider steady transonic shocks for a polytropic gas in a
three-dimensional rectangular nozzle subject to a small conservative
external force. The flow is governed by the full compressible Euler
equations
\begin{equation}\label{Euler0}
\begin{cases}
\nabla\cdot (\rho \u)=0,\\
\nabla\cdot (\rho \u\otimes\u)+\nabla p=\rho \bm f,\\
\nabla\cdot \lrb{(E+p) \u }=\rho \bm f \cdot \u.
\end{cases}
\end{equation}
Here $\u=(u_1,u_2,u_3)$ is the velocity, $\rho$ is the density,
$e$ is the specific internal energy, and
$E=\rho(\frac12|\u|^2+e)$ is the total energy per unit volume.
The vector $\bm f$ denotes the external force per unit mass. For a polytropic gas, the equation of state and the specific internal energy are given by
\begin{equation}\label{eq:state}
p=S\rho^{\ga},
\qquad
e=\frac{p}{(\ga-1)\rho},
\end{equation}
where $\ga>1$ is the adiabatic exponent and $S$ is the entropy
variable. The local sound speed is $c=(\ga p/\rho)^{1/2}$ and the
Mach number is $|\u|/c$. In this paper, we study the existence and structural stability of transonic shocks with constant states under small conservative external forces and perturbations of the exit pressure. A central issue is to determine the shock position from the prescribed data, since the constant background states provide no information about its location.

\subsection{History and Literature}

The study of shock waves has a long history in gas
dynamics. For time dependent flows, substantial progress has been made
in understanding shock formation and stability.
Christodoulou \cite{Christodoulou2007} developed a geometric
approach for relativistic fluids, and Christodoulou-Miao
\cite{ChristodoulouMiao2014} treated the corresponding problem
for nonrelativistic flows.
Luk-Speck \cite{LukSpeck2018,LukSpeck2024} established stability
results allowing nonzero vorticity, while
Buckmaster-Shkoller-Vicol
\cite{BuckmasterShkollerVicol2022,BuckmasterShkollerVicol2023b}
constructed solutions that develop shocks from smooth initial
data in two and three dimensions.
Buckmaster-Iyer \cite{BuckmasterIyer2022} investigated unstable
shock formation for two dimensional isentropic flows.
Multidimensional shock interactions for a Chaplygin gas were
analyzed by Serre \cite{Serre2009}.

For steady flows, a classical problem concerns
transonic shocks in nozzles. In their classical monograph,
Courant-Friedrichs \cite{CourantFriedrichs1948} described how
a supersonic flow may be compressed to a subsonic state through
a shock so that the exit pressure agrees with the prescribed
receiver pressure. Later studies based on quasi one dimensional
models examined the dependence of transonic shocks on the nozzle
geometry and the boundary data; see
\cite{Liu1982,EmbidGoodmanMajda1984}.

For the multidimensional steady Euler equations, two typical
classes of background transonic shocks have been studied
extensively: shocks connecting two constant states in flat nozzles,
and shocks with nonconstant states in divergent nozzles or under
external forces.
For the first class, the background shock can be located at any
interior section of the nozzle, even when the upstream state
and the corresponding constant exit pressure are fixed. For potential
flows, Chen-Feldman \cite{ChenFeldman2003,ChenFeldman2004}
developed a free boundary approach to study perturbations of planar
transonic shocks. Xin-Yin \cite{XinYin2008a}
studied transonic shock problems with different exit conditions and
showed that the pressure boundary value problem can be ill-posed
under perturbations of the exit pressure. For the full Euler
equations, transonic shock problems in flat or almost flat nozzles
were studied with suitable restrictions on the exit data; see
\cite{Chen2008,ChenYuan2008}.
In particular, Chen \cite{Chen2008} and Chen-Yuan
\cite{ChenYuan2008} established structural stability for
three dimensional duct problems, with the shock front passing
through a prescribed point and the exit pressure prescribed up to
an additive constant. A central question for this class of shocks is therefore how to
determine the shock position from the prescribed boundary data.
Fang-Xin \cite{FangXin2021} studied this question for the
two dimensional full Euler equations in an almost flat finite nozzle
with prescribed receiver pressure. They determined the admissible
initial shock positions through the solvability condition of a
linearized free boundary problem, and then constructed nonlinear
transonic shock solutions near these positions. Their results also
show that more than one admissible shock position may occur for the
same receiver pressure, depending on the nozzle geometry. Fang-Gao
\cite{FangGao2021} extended the analysis of admissible shock
positions to three dimensional axisymmetric cylindrical nozzles. Fang-Gao-Xiang-Zhao \cite{FangGaoXiangZhao2024} also
established axisymmetric transonic shocks with large swirl in a
finite cylindrical nozzle, where the swirl plays an essential
role in determining the shock position.

The second class includes radially symmetric shocks in divergent
sectors and spherically symmetric shocks in conic nozzles. For fixed
nozzle geometry and incoming flow, a suitable prescribed exit
pressure determines the position of these background shocks.
Li-Xin-Yin \cite{LiXinYin2009a,LiXinYin2009b}
studied transonic shocks in two dimensional divergent nozzles with
prescribed exit pressure. They further established structural
stability under perturbations of both the nozzle walls and the exit
pressure in \cite{LiXinYin2013}. Related transonic shock problems for the full Euler equations
in two and three dimensions were studied by Xin-Yin
\cite{XinYin2008b}. Bae-Feldman \cite{BaeFeldman2011} studied
transonic shocks in multidimensional divergent nozzles for a
nonisentropic potential flow model.
For three dimensional axisymmetric Euler flows in conic nozzles,
Li-Xin-Yin \cite{LiXinYin2010b}
obtained results on the existence and stability of transonic shocks
and the dependence of the shock position on the exit pressure.
Weng-Xie-Xin \cite{WengXieXin2021} established structural
stability under axisymmetric perturbations of nozzle shape
by introducing modified Lagrangian coordinates.
Park \cite{Park2021} established the stability of three
dimensional axisymmetric transonic shocks in divergent nozzles
under perturbations of the incoming flow and the exit pressure,
using a stream function formulation.
For the full Euler system under general three dimensional
perturbations, Liu-Xu-Yuan \cite{LiuXuYuan2016} proved the
stability of spherical transonic shocks under an additional structural condition
on the background solution. Weng-Xin
\cite{WengXin2026a} established the existence and stability of
cylindrical transonic shocks under three dimensional perturbations
of the incoming flow and the exit pressure. In their setting, no
additional structural condition on the background shock is imposed,
and the shock front is not required to pass through a prescribed
point. Spherical transonic shocks in a hemispherical shell were
also studied in \cite{Weng2025} by introducing the
``spherical projection coordinates''.

Besides the nozzle geometry, a suitable external force can also help
determine the shock position. Weng-Yang \cite{WengYang2025a}
established the structural stability of two dimensional isentropic
transonic shocks under an external force, and Weng-Zhang-Zhou
\cite{WengZhangZhou2025} obtained the corresponding three dimensional
result in a rectangular cylinder. In these two works, a nonzero
axial force is already present in the background problem, and the
corresponding flow states are nonconstant. With the incoming flow
and the external force fixed, a suitable prescribed exit pressure uniquely
determines the background shock position. Both studies use the
deformation-curl decomposition introduced by Weng-Xin
\cite{WengXin2019} to effectively decouple the hyperbolic and
elliptic modes of the steady Euler system. This method is also
used in the cylindrical transonic shock problem discussed above
\cite{WengXin2026a}. Further applications include smooth transonic
flows \cite{WengXin2026b}, the Euler-Poisson system
\cite{Weng2019, WengYangZhang2024}, and transonic MHD shocks with aligned
fields \cite{WengYang2025b}. In the present paper, we also use this
decomposition to study transonic shocks for the three dimensional
full Euler equations under small conservative external forces.

\subsection{Problem formulation}

In this subsection, the transonic shock problem is formulated in
a rectangular nozzle with a prescribed entrance state, slip
conditions on the walls and a prescribed exit pressure.

Throughout the paper, $0<\al<1$ is fixed, and
$\|f\|_{k,\al;D}$ denotes the $C^{k,\al}(\overline D)$ norm.
For vector functions, this norm is the sum of the component norms.

Let $L>0$ and set
\begin{equation*}
\ml C=[-1,1]\times[-1,1],
\qquad
\ml N=[0,L]\times\ml C.
\end{equation*}
The transverse variables are denoted by $x'=(x_2,x_3)\in\ml C$.
The entrance and exit are the planes $x_1=0$ and $x_1=L$,
respectively. The four lateral walls remain fixed.

The external force has the form
$\bm f=-\si\nabla\Phi$, where $\si>0$ is a small parameter and
$\Phi\in C^{3,\al}(\overline{\ml N})$ is prescribed.
The potential $\Phi$ is assumed to be constant on the entrance
and is normalized by
\begin{equation}\label{phi_entrance}
\Phi(0,x')=0,
\qquad x'\in\ml C.
\end{equation}
The following compatibility conditions are imposed on the lateral
walls:
\begin{equation}\label{phi_wall}
\begin{cases}
\p_{x_2}\Phi(x_1,\pm1,x_3)=0,\\
\p_{x_3}\Phi(x_1,x_2,\pm1)=0,
\end{cases}
\end{equation}
and
\begin{equation}\label{phi_wall3}
\begin{cases}
\p_{x_2}^3\Phi(x_1,\pm1,x_3)=0,
&(x_1,x_3)\in[0,L]\times[-1,1],\\
\p_{x_3}^3\Phi(x_1,x_2,\pm1)=0,
&(x_1,x_2)\in[0,L]\times[-1,1].
\end{cases}
\end{equation}

Define the generalized Bernoulli function by
\begin{equation}\label{def_Ber}
B=\frac12|\u|^2
+\frac{\ga}{\ga-1}S\rho^{\ga-1}
+\si\Phi.
\end{equation}
Then the Euler system \eqref{Euler0} can be written as
\begin{equation}\label{Euler1}
\begin{cases}
\p_{x_1}(\rho u_1)+\p_{x_2}(\rho u_2)+\p_{x_3}(\rho u_3)=0,\\
\p_{x_1}(\rho u_1^2+p)+\p_{x_2}(\rho u_1u_2)
+\p_{x_3}(\rho u_1u_3)=-\si\rho\p_{x_1}\Phi,\\
\p_{x_1}(\rho u_1u_2)+\p_{x_2}(\rho u_2^2+p)
+\p_{x_3}(\rho u_2u_3)=-\si\rho\p_{x_2}\Phi,\\
\p_{x_1}(\rho u_1u_3)+\p_{x_2}(\rho u_2u_3)
+\p_{x_3}(\rho u_3^2+p)=-\si\rho\p_{x_3}\Phi,\\
\p_{x_1}(\rho u_1B)+\p_{x_2}(\rho u_2B)
+\p_{x_3}(\rho u_3B)=0.
\end{cases}
\end{equation}

Introduce the variables $\ml U=(\u,S,B)$. The deformation-curl
decomposition of Weng-Xin \cite{WengXin2019} gives the following
reformulation of \eqref{Euler0}, which is widely used in many circumstances, especially in the three dimensional transonic shock problem \cite{Weng2025,WengXin2026a}.

By \eqref{def_Ber}, the density and pressure can be expressed as
\begin{equation}\label{def_rho_p}
\begin{split}
\rho
&=\rho(B,S,|\u|^2,\Phi)
=\lrb{
\frac{\ga-1}{\ga S}
\lrb{B-\frac12|\u|^2-\si\Phi}
}^{\frac{1}{\ga-1}},\\
p
&=p(B,S,|\u|^2,\Phi)
=\lrb{
\frac{\ga-1}{\ga S^{1/\ga}}
\lrb{B-\frac12|\u|^2-\si\Phi}
}^{\frac{\ga}{\ga-1}}.
\end{split}
\end{equation}
The sound speed is given by
\begin{equation}\label{sound_speed}
c^2=c^2(B,S,|\u|^2,\Phi)
=(\ga-1)\lrb{B-\frac12|\u|^2-\si\Phi}.
\end{equation}

The entropy $S$ and the generalized Bernoulli function $B$
are constant along streamlines and satisfy the transport equations
\begin{equation}\label{trans_BS}
\rho(u_1\p_{x_1}+u_2\p_{x_2}+u_3\p_{x_3})(S,B)=0.
\end{equation}

Define the vorticity $\bm\om=(\om_1,\om_2,\om_3)=\curl\u$ by
\begin{equation*}
\om_1=\p_{x_2}u_3-\p_{x_3}u_2,
\qquad
\om_2=\p_{x_3}u_1-\p_{x_1}u_3,
\qquad
\om_3=\p_{x_1}u_2-\p_{x_2}u_1.
\end{equation*}
When $u_1>0$, the two transverse momentum equations yield
\begin{equation}\label{eq_om23}
\begin{split}
\om_2
&=\frac{1}{u_1}\lrb{
\om_1u_2+\p_{x_3}B
-\frac{\rho^{\ga-1}}{\ga-1}\p_{x_3}S
},\\
\om_3
&=\frac{1}{u_1}\lrb{
\om_1u_3-\p_{x_2}B
+\frac{\rho^{\ga-1}}{\ga-1}\p_{x_2}S
}.
\end{split}
\end{equation}
Substituting \eqref{eq_om23} into the identity
$\div\bm\om=0$ gives a transport equation for $\om_1$:
\begin{equation}\label{eq_om1}
\begin{split}
&\lrb{
\p_{x_1}+\frac{u_2}{u_1}\p_{x_2}
+\frac{u_3}{u_1}\p_{x_3}
}\om_1
+\om_1\sum_{i=2}^3\p_{x_i}\lrb{\frac{u_i}{u_1}}\\
={}&
\p_{x_2}B\,\p_{x_3}\lrb{\frac{1}{u_1}}
-\frac{\p_{x_2}S}{\ga-1}
\p_{x_3}\lrb{\frac{\rho^{\ga-1}}{u_1}}\\
&-\p_{x_3}B\,\p_{x_2}\lrb{\frac{1}{u_1}}
+\frac{\p_{x_3}S}{\ga-1}
\p_{x_2}\lrb{\frac{\rho^{\ga-1}}{u_1}}.
\end{split}
\end{equation}

Substituting the expression for $\rho$ in \eqref{def_rho_p}
into the mass equation and using \eqref{trans_BS} yields
\begin{equation}\label{def_eq}
\sum_{i,j=1}^3
\bigl(c^2(B,S,|\u|^2,\Phi)\de_{ij}-u_iu_j\bigr)
\p_{x_i}u_j
=\si\u\cdot\nabla\Phi.
\end{equation}
Together with the vorticity equations, this gives the
deformation-curl system for the velocity:
\begin{equation}\label{dc_sys}
\begin{cases}
\displaystyle
\sum_{i,j=1}^3(c^2\de_{ij}-u_iu_j)\p_{x_i}u_j
=\si\u\cdot\nabla\Phi,\\
\p_{x_2}u_3-\p_{x_3}u_2=\om_1,\\
\p_{x_3}u_1-\p_{x_1}u_3=\om_2,\\
\p_{x_1}u_2-\p_{x_2}u_1=\om_3.
\end{cases}
\end{equation}

Thus the steady Euler system is decomposed into the transport
equations \eqref{trans_BS} and \eqref{eq_om1} for
$S$, $B$ and $\om_1$, and the first order deformation-curl
system \eqref{dc_sys} for $\u$, with
$\om_2$ and $\om_3$ given by \eqref{eq_om23}.
The corresponding equivalence is stated below; see
\cite[Lemma~2.1]{WengXin2026a} for the full Euler system without
an external force and
\cite[Lemma~2.1]{WengZhangZhou2025} for isentropic flows with
an external force. The proof follows the same argument and
is omitted.

\begin{lemma}[Equivalence]\label{lem_equivalence}
Let $(\rho,\u,S)$ be a $C^2$ state in a domain $D$ with $\rho>0,\,u_1>0$. Define $p$ and $B$ by \eqref{eq:state} and \eqref{def_Ber},
respectively, and let $\bm\om=\curl\u$.
Then the following two statements are equivalent:
\begin{enumerate}
\item
$(\rho,\u,S)$ satisfies the steady Euler system
\eqref{Euler0} in $D$.

\item
$(\u,S,B)$ satisfies the transport equations
\eqref{trans_BS}, the algebraic relations \eqref{eq_om23}
and the deformation-curl system \eqref{dc_sys}
in $D$.
\end{enumerate}
\end{lemma}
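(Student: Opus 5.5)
The plan is to prove both directions by rewriting the conservative system \eqref{Euler1} in non-conservative form. For a $C^2$ state with $\rho>0$ we may use the mass equation to turn the momentum equations into $\rho(\u\cdot\nabla)\u+\nabla p=-\si\rho\nabla\Phi$. We then apply the vector identity $(\u\cdot\nabla)\u=\nabla\frac12|\u|^2-\u\times\bm\om$ and the polytropic relation
\begin{equation*}
\frac{1}{\rho}\nabla p=\nabla\Big(\frac{\ga}{\ga-1}S\rho^{\ga-1}\Big)-\frac{\rho^{\ga-1}}{\ga-1}\nabla S .
\end{equation*}
Together these give the Crocco-type form
\begin{equation*}
\nabla B-\frac{\rho^{\ga-1}}{\ga-1}\nabla S=\u\times\bm\om .
\end{equation*}
The energy equation, again using mass conservation, is equivalent to $\rho\,\u\cdot\nabla B=0$. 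Dotting the Crocco form with $\u$ then shows that, given the momentum equations, $\u\cdot\nabla B=0$ holds if and only if $\u\cdot\nabla S=0$. This produces \eqref{trans_BS}.

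For (1)$\Rightarrow$(2), we read off the second and third components of the Crocco form:
\begin{equation*}
\p_{x_2}B-\tfrac{\rho^{\ga-1}}{\ga-1}\p_{x_2}S=u_3\om_1-u_1\om_3,\qquad
\p_{x_3}B-\tfrac{\rho^{\ga-1}}{\ga-1}\p_{x_3}S=u_1\om_2-u_2\om_1 .
\end{equation*}
Dividing by $u_1>0$ gives exactly \eqref{eq_om23}. Next we expand the mass equation with $\rho=\rho(B,S,|\u|^2,\Phi)$ from \eqref{def_rho_p}. The derivatives of $\rho$ in $B$ and $S$ drop out along $\u$ by \eqref{trans_BS}. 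The relation $\p\rho/\p(\frac12|\u|^2)=\p\rho/\p(-\si\Phi)=-\rho/c^2$ then turns $\div(\rho\u)=0$ into \eqref{def_eq}. The curl equations in \eqref{dc_sys} hold by the definition $\bm\om=\curl\u$.

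For (2)$\Rightarrow$(1), we reverse this chain. The relation \eqref{def_eq} combined with \eqref{trans_BS} gives back $\div(\rho\u)=0$, since every step above was an identity once $\rho$ is defined through \eqref{def_rho_p}; this is consistent with the definition of $B$. The relations \eqref{eq_om23} give the second and third components of the Crocco form. The main obstacle is the first component, since it is not among the hypotheses. To recover it, we take the inner product of the Crocco vector $\mathbf{V}:=\nabla B-\frac{\rho^{\ga-1}}{\ga-1}\nabla S-\u\times\bm\om$ with $\u$. The term $\u\cdot(\u\times\bm\om)$ vanishes, and \eqref{trans_BS} gives $\u\cdot\nabla B=\u\cdot\nabla S=0$, so $\u\cdot\mathbf{V}=0$. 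Since $V_2=V_3=0$, this reduces to $u_1V_1=0$, and $u_1>0$ forces $V_1=0$. With the full Crocco identity and mass conservation in hand, we undo the non-conservative manipulations to recover the momentum equations of \eqref{Euler0}. Finally, $\rho\,\u\cdot\nabla B=0$ together with $\div(\rho\u)=0$ gives the energy equation, which completes the equivalence.

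Note that \eqref{eq_om1} is not needed for the equivalence. It is only a consequence of \eqref{eq_om23} and $\div\bm\om=0$, which holds automatically because $\bm\om=\curl\u$ with $\u\in C^2$. The $C^2$ regularity is used so that $\bm\om\in C^1$ and all the identities above make sense classically.
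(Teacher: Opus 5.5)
Your proof is correct and is essentially the standard argument the paper defers to; the paper omits its own proof and points to Weng--Xin and Weng--Zhang--Zhou. You write the momentum equations in Crocco form, recover the missing first component from $\u\cdot(\u\times\bm\om)=0$, \eqref{trans_BS} and $u_1>0$, and use Bernoulli's law to turn $\div(\rho\u)=0$ into \eqref{def_eq}. One sign slip needs fixing: since $\rho$ depends on $B-\frac12|\u|^2-\si\Phi$, the correct identity is $\p\rho/\p(\si\Phi)=-\rho/c^2$ (not $\p\rho/\p(-\si\Phi)=-\rho/c^2$), and only with this sign does the right-hand side $+\si\u\cdot\nabla\Phi$ of \eqref{def_eq} come out.
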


The aim is to establish the structural stability of one dimensional
transonic shocks connecting two constant states under small external
forces and perturbations of the exit pressure. The background shock
is a normal shock of the Euler equations without an external force.
It connects two constant states
\begin{equation}\label{bg_states}
\ml{\bU}_\pm=(\bu_\pm,0,0,\bS_\pm,\bB).
\end{equation}
Denote the corresponding densities and pressures by
$\brho_\pm>0$ and $\bp_\pm>0$, respectively, and set
\begin{equation*}
\bp_\pm=\bS_\pm\brho_\pm^\ga,
\qquad
\bc_\pm^2=\frac{\ga\bp_\pm}{\brho_\pm},
\qquad
\bM_\pm=\frac{\bu_\pm}{\bc_\pm}.
\end{equation*}
The common Bernoulli constant is
\begin{equation}\label{bg_B}
\bB
=\frac12\bu_\pm^2
+\frac{\ga}{\ga-1}\bS_\pm\brho_\pm^{\ga-1}.
\end{equation}
For the background states, let
$[\bar f]:=\bar f_+-\bar f_-$.
The normal shock relations are
\begin{align}
[\brho\bu]&=0,\label{bk_rh1}\\
[\brho\bu^2+\bp]&=0,\label{bk_rh2}\\
[\bB]&=0.\label{bk_rh3}
\end{align}
The common mass and momentum fluxes are denoted by
\begin{equation*}
\brho\bu:=\brho_-\bu_-=\brho_+\bu_+,
\qquad
\brho\bu^2+\bp
:=\brho_-\bu_-^2+\bp_-
=\brho_+\bu_+^2+\bp_+.
\end{equation*}
The background states are assumed to satisfy
\begin{equation}\label{bg_shock_cond}
\bM_->1>\bM_+>0,
\qquad
[\bp]>0,
\qquad
[\bS]>0.
\end{equation}

Then, the piecewise constant flow
\begin{equation}\label{bg_shock}
\ml{\bU}(x)
=
\begin{cases}
\ml{\bU}_-,&0<x_1<L_s,\\
\ml{\bU}_+,&L_s<x_1<L,
\end{cases}
\end{equation}
is a transonic shock solution for every $L_s\in(0,L)$. Thus, the shock may be located anywhere in $(0,L)$, and the background states provide no information about its position.

In the perturbed problem, both the flow states and the shock
front are unknown. The upstream and downstream states are
denoted by $\ml U_-$ and $\ml U_+$, respectively.
The entrance state remains $\ml{\bU}_-$, and the external
force is $\bm f=-\si\nabla\Phi$.

The shock front is represented as
\begin{equation}\label{def_shock}
\ml S
:=\{(x_1,x_2,x_3):
x_1=\xi(x_2,x_3),\ (x_2,x_3)\in\ml C\}.
\end{equation}

The jump of a quantity $f$ across $\ml S$ is denoted by
$[f]:=f_+(\xi(x'),x')-f_-(\xi(x'),x')$.
The Rankine-Hugoniot conditions follow from the conservation
form \eqref{Euler1} and read
\begin{align}
[\rho u_1]
-\p_{x_2}\xi[\rho u_2]
-\p_{x_3}\xi[\rho u_3]
&=0,\label{rh1}\\
[\rho u_1^2+p]
-\p_{x_2}\xi[\rho u_1u_2]
-\p_{x_3}\xi[\rho u_1u_3]
&=0,\label{rh2}\\
[\rho u_1u_2]
-\p_{x_2}\xi[\rho u_2^2+p]
-\p_{x_3}\xi[\rho u_2u_3]
&=0,\label{rh3}\\
[\rho u_1u_3]
-\p_{x_2}\xi[\rho u_2u_3]
-\p_{x_3}\xi[\rho u_3^2+p]
&=0,\label{rh4}\\
[B]&=0.\label{rh5}
\end{align}

The first four conditions can be rewritten to separate the
equations for the shock slopes from the remaining jump conditions.
For this purpose, define on $\ml S$
\begin{equation*}
\begin{split}
J_{0j}(\ml U_+,\ml U_-,\Phi)
&=\lrs{\rho(B,S,|\u|^2,\Phi)u_j},
\qquad j=1,2,3,\\
J_{ij}(\ml U_+,\ml U_-,\Phi)
&=\lrs{
\rho(B,S,|\u|^2,\Phi)u_iu_j
+\de_{ij}p(B,S,|\u|^2,\Phi)
},
\qquad i,j=1,2,3.
\end{split}
\end{equation*}
Then \eqref{rh1}-\eqref{rh4} take the form
\begin{equation}\label{def_G0123}
\ml J_i(\ml U_+,\ml U_-;
\Phi,\p_{x_2}\xi,\p_{x_3}\xi)
\quad:=J_{i1}-J_{i2}\p_{x_2}\xi-J_{i3}\p_{x_3}\xi=0,
\quad i=0,1,2,3.
\end{equation}
Provided that $J_{22}J_{33}-J_{23}J_{32}\neq0$, the equations
$\ml J_2=\ml J_3=0$ give
\begin{equation}\label{def_mlS_23}
\begin{cases}
\displaystyle
\p_{x_2}\xi
=\frac{J_{21}J_{33}-J_{23}J_{31}}
{J_{22}J_{33}-J_{23}J_{32}}
:=\ml S_2(\ml U_+,\ml U_-;\Phi),\\
\displaystyle
\p_{x_3}\xi
=\frac{J_{22}J_{31}-J_{21}J_{32}}
{J_{22}J_{33}-J_{23}J_{32}}
:=\ml S_3(\ml U_+,\ml U_-;\Phi).
\end{cases}
\end{equation}
Substituting these expressions into $\ml J_0=\ml J_1=0$
yields
\begin{equation}\label{def_G01_0}
G_i(\ml U_+,\ml U_-;\Phi)
:=J_{i1}-\ml S_2J_{i2}-\ml S_3J_{i3}=0,
\qquad i=0,1.
\end{equation}
Under the same nonvanishing condition,
\eqref{rh1}-\eqref{rh4} are equivalent to these two equations
together with \eqref{def_mlS_23}.

At the nozzle exit, the pressure is prescribed as
\begin{equation*}
p_+(L,x')=\bp_++\si p_{ex}(x'),
\end{equation*}
where $p_{ex}\in C^{2,\al}(\overline{\ml C})$ satisfies
the compatibility conditions
\begin{equation}\label{exit_comp}
\p_{x_2}p_{ex}(\pm1,x_3)=0,
\qquad
\p_{x_3}p_{ex}(x_2,\pm1)=0.
\end{equation}

With the boundary data and jump conditions specified above,
the free boundary problem is formulated as follows.

\begin{problem}\label{prob1}
Find a shock front $\ml S$ of the form \eqref{def_shock}
and flow states
\begin{equation*}
\ml U_\pm=(u_{1\pm},u_{2\pm},u_{3\pm},S_\pm,B_\pm)
\end{equation*}
satisfying the following conditions.
\begin{enumerate}
\item
In the upstream domain
\begin{equation}\label{def_N-}
\ml N_-
:=\{(x_1,x_2,x_3):
0<x_1<\xi(x'),\ (x_2,x_3)\in\ml C\},
\end{equation}
$\ml U_-$ satisfies the Euler equations \eqref{Euler0},
the entrance condition
\begin{equation}
\ml U_-(0,x_2,x_3)=\ml{\bU}_-,
\qquad (x_2,x_3)\in\ml C,
\end{equation}
and the slip conditions
\begin{equation}\label{bdy_U-_Ga0_Wall}
\begin{cases}
u_{2-}(x_1,\pm1,x_3)=0,
&0\leq x_1\leq\xi(\pm1,x_3),\quad x_3\in[-1,1],\\
u_{3-}(x_1,x_2,\pm1)=0,
&0\leq x_1\leq\xi(x_2,\pm1),\quad x_2\in[-1,1].
\end{cases}
\end{equation}

\item
In the downstream domain
\begin{equation}\label{def_N+}
\ml N_+
:=\{(x_1,x_2,x_3):
\xi(x')<x_1<L,\ (x_2,x_3)\in\ml C\},
\end{equation}
$\ml U_+$ satisfies the Euler equations \eqref{Euler0},
the slip conditions
\begin{equation}\label{bdy_U+_wall}
\begin{cases}
u_{2+}(x_1,\pm1,x_3)=0,
&\xi(\pm1,x_3)\leq x_1\leq L,\quad x_3\in[-1,1],\\
u_{3+}(x_1,x_2,\pm1)=0,
&\xi(x_2,\pm1)\leq x_1\leq L,\quad x_2\in[-1,1],
\end{cases}
\end{equation}
and the exit pressure condition
\begin{equation}\label{bdy_p+}
p_+(L,x_2,x_3)=\bp_++\si p_{ex}(x_2,x_3),
\qquad (x_2,x_3)\in\ml C.
\end{equation}

\item
Across $\ml S$, the two states $\ml U_-$ and $\ml U_+$
satisfy the Rankine-Hugoniot conditions
\eqref{rh1}-\eqref{rh5}.
\end{enumerate}
\end{problem}

The next subsection states the main result for
Problem~\ref{prob1}.

\subsection{Main results and comments}

To state the main result, define
\begin{equation}\label{pos_F}
F(t)
=\bar K\int_{\ml C}\Phi(t,x')\,dx'
-\frac{\brho_+}{1-\bM_+^2}
\int_{\ml C}\Phi(L,x')\,dx',
\qquad 0\leq t\leq L,
\end{equation}
where
\begin{equation*}
\bar K=\brho_+
+\frac{\ga-1}{\ga+1}\brho_->0.
\end{equation*}

\begin{theorem}\label{thm_nonlin}
Let $\ml{\bU}_\pm$ be the background states specified above.
Suppose that $\Phi\in C^{3,\al}(\overline{\ml N})$ satisfies
\eqref{phi_entrance}, \eqref{phi_wall} and
\eqref{phi_wall3}, and that $p_{ex}$
satisfies \eqref{exit_comp}.
Assume that
\begin{equation}\label{lin_pos_range}
\inf_{0<t<L}F(t)
<\int_{\ml C}p_{ex}(x')\,dx'
<\sup_{0<t<L}F(t)
\end{equation}
and
\begin{equation}\label{lin_pos_mono}
\int_{\ml C}\p_{x_1}\Phi(t,x')\,dx'\neq0,
\qquad 0<t<L.
\end{equation}
Then there exists a unique $\bxi^*\in(0,L)$ satisfying
\begin{equation}\label{lin_pos_eq}
F(\bxi^*)=\int_{\ml C}p_{ex}(x')\,dx'.
\end{equation}

Moreover, there exists $\si_*>0$ such that, for every
$0<\si\leq\si_*$, Problem~\ref{prob1} admits a unique transonic
shock solution $(\ml U_-,\ml U_+;\xi)$ with
\begin{equation}\label{final_reg}
\ml U_\pm\in C^{2,\al}(\overline{\ml N_\pm}),
\qquad
\xi\in C^{3,\al}(\overline{\ml C}),
\end{equation}
and
\begin{equation}\label{final_est}
\sum_\pm
\|\ml U_\pm-\ml{\bU}_\pm\|_{2,\al;\ml N_\pm}
+\|\xi-\bxi^*\|_{3,\al;\ml C}
\leq C\si.
\end{equation}

The flow is supersonic upstream and subsonic downstream:
\begin{equation*}
u_{1-}^2-c_-^2>0
\quad\text{in }\ml N_-,
\qquad
c_+^2-|\u_+|^2>0
\quad\text{in }\ml N_+.
\end{equation*}

The entropy condition
\begin{equation*}
S_+(\xi(x'),x')-S_-(\xi(x'),x')>0,
\qquad x'\in\overline{\ml C},
\end{equation*}
holds across the shock. The constants $\si_*$ and $C$ depend only
on the background states, the nozzle, $\al$ and the prescribed
data.
\end{theorem}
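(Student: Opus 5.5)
The proof has three parts: an elementary argument for the existence and uniqueness of $\bxi^*$, a linearized free boundary problem in which the shock position is fixed by a mass-balance solvability condition, and a nonlinear iteration.

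\textbf{Existence and uniqueness of $\bxi^*$.} Differentiating \eqref{pos_F} gives $F'(t)=\bar K\int_{\ml C}\p_{x_1}\Phi(t,x')\,dx'$. Since $\bar K>0$, assumption \eqref{lin_pos_mono} and continuity imply that $F'$ has a fixed sign on $(0,L)$. Hence $F$ is strictly monotone, and \eqref{lin_pos_range} together with the intermediate value theorem yields a unique $\bxi^*$ satisfying \eqref{lin_pos_eq}.

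\textbf{Linearized problem and the solvability condition.} Write $\ml U_\pm=\ml{\bU}_\pm+\si\dot{\ml U}_\pm$ and $\xi=\bxi+\si\dot\xi$, and linearize everything at $\si=0$.

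Upstream, the problem is a hyperbolic Cauchy problem with constant entrance data. At first order, $S_-=\bS_-$ and $B_-=\bB$, so $\om_1=0$, and the perturbation $\dot{\u}_-$ solves the linearized supersonic potential equation $(\bc_-^2-\bu_-^2)\p_{x_1}\dot u_1+\bc_-^2(\p_{x_2}\dot u_2+\p_{x_3}\dot u_3)=\bu_-\p_{x_1}\Phi$ with curl-free data. This is solvable on all of $[0,L]\times\ml C$ by reflection across the walls; this is where \eqref{phi_wall} and \eqref{phi_wall3} are used. We therefore extend $\ml U_-$ past any shock position. The nonlinear upstream problem is solved the same way, with $C^{2,\al}$ bounds of order $\si$.

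Downstream, linearizing \eqref{def_G01_0}, \eqref{rh5} and the entropy jump at $x_1=\bxi$ gives:
\begin{itemize}
\item boundary data for $\dot S_+$, $\dot B_+$ and $\dot u_{1+}$ in terms of $\dot{\ml U}_-$ and $\Phi(\bxi,x')$ (the $\Phi$ terms enter through $\rho$, $p$);
\item the shock slopes from \eqref{def_mlS_23}, namely $\nabla'\dot\xi=(\dot u_{2+},\dot u_{3+})\cdot\bu_+/[\bp]+\dots$.
\end{itemize}
The transport equations propagate $\dot S_+$, $\dot B_+$ and $\dot\om_{1}$ along $x_1$-lines. The deformation-curl system \eqref{dc_sys} then reduces to a second-order elliptic equation for a potential-like part with the following boundary conditions:
\begin{itemize}
\item a Neumann condition at $x_1=\bxi$ coming from $\dot u_{1+}$;
\item slip conditions on the walls;
\item at the exit, a condition from linearizing $p_+(L)=\bp_++\si p_{ex}$. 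This is a Dirichlet-type condition on $\dot u_1$, obtained through the linearized Bernoulli relation, which involves $\Phi(L,x')$ and $\dot S_+$.
\end{itemize}

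This elliptic problem is well posed only if a compatibility condition holds. Integrating the linearized mass equation $\div(\brho\dot{\u}+\dot\rho\bar{\u})=0$ over $\ml N_+$ makes the wall terms vanish, so the mass flux at the exit must equal the mass flux through the shock. The latter is determined by the conserved upstream flux, and the upstream linearized mass flux is computable in terms of $\Phi$. Expressing the exit flux through $p_{ex}$, $\Phi(L,\cdot)$ and the Bernoulli relation, the condition collapses to exactly $F(\bxi)=\int_{\ml C}p_{ex}$. I expect the coefficient $\bar K=\brho_++\frac{\ga-1}{\ga+1}\brho_-$ to come from combining the linearized Rankine--Hugoniot relations, using $\bu_+\bu_-=\frac{2(\ga-1)}{\ga+1}\cdot\frac{\ga}{\ga-1}\cdots$ (the Prandtl relation). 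This forces $\bxi=\bxi^*$.

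With this condition satisfied, the linear problem has a solution, unique up to a constant in $\dot\xi$ that is fixed by the mean of the next-order condition. Schauder estimates in $C^{2,\al}$ hold in the box, with wall reflections handled via the compatibility conditions.

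\textbf{Nonlinear iteration.} Fix the shock position as $\xi=\bxi^*+\si\eta$ and flatten $\ml N_+$ to $(\bxi^*,L)\times\ml C$. Define a map on a ball of radius $C\si$ in the space of $(\dot{\ml U}_+,\eta)$: freeze the nonlinear terms, solve the linear problem above, and recover $\eta$ from its gradient via \eqref{def_mlS_23}. The gradient field is curl-free by a compatibility identity inherited from the vorticity equation.

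The mean value of $\eta$, equivalently the perturbed shock location, is the extra unknown needed to satisfy the perturbed solvability condition. At this step the nondegeneracy $F'(\bxi^*)\ne0$ from \eqref{lin_pos_mono} lets us solve for it by the implicit function theorem. Contraction in a weaker $C^{1,\al}$ norm for small $\si$ then gives existence and uniqueness. The supersonic/subsonic properties and the entropy inequality follow by smallness from \eqref{bg_shock_cond}.

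\textbf{Main obstacle.} The hardest step is deriving the solvability condition precisely, together with handling its nonlinear counterpart, which couples the shock mean position to the elliptic problem. A secondary difficulty is the loss of one derivative for $\xi$; I would resolve it by recovering $\xi$ from the $C^{2,\al}$ slope formulas.
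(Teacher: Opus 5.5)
Your existence and uniqueness argument for $\bxi^*$ is the paper's. Your mass-flux derivation of the linear solvability condition is equivalent to the paper's integration of the linearized deformation equation, which is the linearized continuity equation rewritten with Bernoulli's law. It also closes without the Prandtl-relation guess. The linearized upstream flux through every section vanishes because the entrance data are fixed, so the linearized exit flux must vanish. Inserting $\dS_+$ from Lemma~\ref{RH_bd_condi} and \eqref{lin_up_int} then gives $\bar K=-\frac{1+(\ga-1)\bM_+^2}{1-\bM_+^2}\bigl(b_1-\bu_+b_0-\frac{\bM_-^2}{\bu_-^2}[\bp]\bigr)=\brho_++\frac{\ga-1}{\ga+1}\brho_-$. (A small slip: the slope relation is $\p_{x_i}\dot\xi=\frac{\brho\bu}{[\bp]}(\du_{i+}-\du_{i-})$, not a multiple of $\bu_+/[\bp]$.) The genuine gaps are in the nonlinear iteration.

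\emph{The curl equations.} The step ``freeze the nonlinear terms and solve the linear problem above'' fails for the curl equations. In the linear problem the curl data are exactly the curl of $-\frac{\brho_+^{\ga-1}}{(\ga-1)\bu_+}\dS_+\bm e_1$, which is why a potential exists. On the flattened domain with frozen input, the curl data $(\ml G_1,\ml G_2,\ml G_3)$ in \eqref{it_G123} contain coordinate corrections evaluated at $\widehat V$. In general $\p_{y_1}\ml G_1+\p_{y_2}\ml G_2+\p_{y_3}\ml G_3\neq0$, so $\curl\w=\bm G$ has no solution. Some enlargement is needed, such as the auxiliary $\Pi$ in \eqref{it_dc}, and its boundary conditions are not arbitrary. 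The Neumann condition $\p_{y_1}\Pi=0$ on $\Ga_s^*$ makes the first curl equation hold exactly on the shock. Together with the shock datum for $\om_1$, this is what gives $\p_{y_2}q_3=\p_{y_3}q_2$ for the updated slopes. That datum is not inherited from the vorticity equation. It has to be imposed as the curl-compatibility of the two tangential Rankine--Hugoniot relations, and this is also the missing shock value of $\dot\om_{1+}$ in your linear problem. Finally, you must show $\Pi\equiv0$ at the fixed point, which the paper does via the elliptic equation \eqref{fix_Pi_eq} and the maximum principle. Otherwise the fixed point does not solve the Euler system.

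\emph{The implicit function step.} This step is not automatic, because the nondegeneracy has size $\si$: $\p_{\ddot\xi^*}\ml I\approx\frac{1-\bM_+^2}{\brho\bu}\si F'(\bxi^*)$. Every other dependence of the solvability functional on the displacement must therefore be $o(\si)$, uniformly over the iteration set. On the flattened domain the displacement also enters through the Jacobian in $\ml D_1^{\widehat\xi}\widehat V_1$. After combining with the potential source term, this contributes to leading order the coefficient $\frac{1}{L-\bxi^*}\bigl((1-\bM_+^2)\int_{\ml C}(\widehat V_1(L,y')-\widehat V_1(\bxi^*,y'))\,dy'-\int_{\dN_+}\dot F_0\,d\y\bigr)$ of $\ddot\xi^*$. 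This vanishes for $\widehat V=\ml{\dU}_+$ by the linear solvability identity. For an input that is merely bounded by $C\si$, however, it is of order $\si$, the same as the leading term. This is why the paper iterates on the set $\mk K$ of \eqref{it_space}. There, inputs are bounded by $M\si$ in $C^{2,\al}$ but lie within $\si^{3/2}$ of the linear approximation in $C^{1,\al}$; see the cancellation between \eqref{pos_I12} and \eqref{pos_I3} and the bound \eqref{pos_key_error}. Proposition~\ref{prop_iter_map} then has to verify that this two-tier set is preserved, using $C_1(1+M)^2\si^2\le\si^{3/2}$. Your ``ball of radius $C\si$'' lacks this refinement. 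Without it, or some other mechanism making the remainder $o(\si)$, the displacement is not determined. The contraction bound $|\ddot\xi_1^*-\ddot\xi_2^*|\le C\delta$ also fails, since it rests on $|\p_{\ddot\xi^*}\ml I|\ge c_*\si$ dominating $|\ml I^{(1)}-\ml I^{(2)}|\le C\si\delta$.
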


The theorem shows that, under the stated assumptions, a transonic shock persists when small nonzero horizontal external forces are exerted and the exit pressure is perturbed. No point on the shock front is fixed in advance. The flow on each side remains close to the corresponding constant state, and the shock front remains close to a plane. The background shock can be placed anywhere in $(0,L)$. To obtain an initial shock position for the perturbed problem, we linearize the deformation-curl formulation of the steady Euler equations and the Rankine-Hugoniot conditions at the background shocks. Solving the transport equations for the entropy and the Bernoulli function then leads to a deformation-curl system for the velocity in supersonic and subsonic regimes with mixed boundary conditions on the shock and normal boundary conditions on others.The integral solvability condition from the above boundary value problems can uniquely determine the shock location $\bxi^*$. It is clear from our analysis that conservation of mass plays a critical role. The key assumption on the external force concerns its horizontal component: its integral over each interior cross section is nonzero, as required by \eqref{lin_pos_mono}. Together with the range condition \eqref{lin_pos_range} on the exit pressure, this ensures that \eqref{lin_pos_eq} determines a unique approximating shock position $\bxi^*$. Starting from this approximation, the nonlinear construction determines both the position and the shape of the perturbed shock.

There are three main issues in passing from the initial approximation to a nonlinear solution.
First, the shock position must remain uniquely determined
throughout the iteration. Its determining equation is the
integral solvability condition for the downstream
deformation-curl system, which expresses conservation of mass.
With the upstream solution $\ml U_-$ already constructed and
the iteration functions $\widehat V$ and
$\widehat{\bm q}=(\widehat q_2,\widehat q_3)$ fixed, this
condition reads
\begin{equation*}
\ml I(\widehat V,\widehat{\bm q};
\ml U_-,\Phi,p_{ex};\ddot\xi^*)=0,
\end{equation*}
where $\ddot\xi^*=\xi(1,1)-\bxi^*$; see \eqref{pos_eq}.
For $|\ddot\xi^*|\leq C\si$, its derivative satisfies
\begin{equation}\label{I_deriv}
\p_{\ddot\xi^*}\ml I
=\frac{1-\bM_+^2}{\brho\bu}\si\bar K
\int_{\ml C}\p_{x_1}\Phi(\bxi^*,y')\,dy'
+o(\si).
\end{equation}
Condition \eqref{lin_pos_mono} makes the leading coefficient
nonzero. Together with $\ml I(0)=O(\si^2)$, this allows the
implicit function theorem, applied to $\si^{-1}\ml I=0$,
to determine a unique displacement $\ddot\xi^*=O(\si)$.
The two tangential derivatives of the shock are updated
through the tangential Rankine-Hugoniot conditions. Their
compatibility condition provides the boundary data for the
first component of the vorticity. The updated shock front
is then recovered from these derivatives and the value
$\xi(1,1)=\bxi^*+\ddot\xi^*$.

Second, after transforming the subsonic region to a fixed
domain and formulating the linear problem for the iteration,
the source terms in the curl equations are not divergence
free in general. Following \cite{WengXin2026a}, an auxiliary
scalar function with homogeneous mixed boundary conditions
is introduced to enlarge the deformation-curl system.
Its gradient corrects the source terms so that the corrected
curl data are divergence free. The auxiliary function is
shown to vanish identically at the fixed point, and the
enlarged system then reduces to the original
deformation-curl system.

Third, the estimates must hold up to the edges where the
shock or the exit meets the flat walls, and where the walls
meet one another. The compatibility conditions on the
potential and the exit pressure allow even and odd extensions
across the walls. The transport equations and the shock
relations preserve these extensions. The iteration has
uniform $C^{2,\al}$ bounds for the flow and the shock slopes,
and is a contraction in $C^{1,\al}$. The resulting shock
front belongs to $C^{3,\al}$. Returning to the original
coordinates gives the solution in
Theorem~\ref{thm_nonlin}.

\begin{remark}
The present construction is inspired by the idea of Fang-Xin
\cite{FangXin2021}: an initial shock position is first determined
through a solvability condition, and a nonlinear solution is then
constructed near this position. The choice of decomposition is
important in carrying out this approach in three dimensions.

Fang-Xin use a first order elliptic system for the pressure and
flow angle, coupled with transport equations for the entropy
and Bernoulli function. This decomposition was first observed by Serre
\cite{Serre1995}. This elliptic formulation relies on the
two dimensional structure and does not extend directly to
general three dimensional flows. The deformation-curl
decomposition used here instead gives an elliptic system for
the three velocity components in the subsonic region, coupled
with transport equations for the entropy and the first component
of the vorticity. This decomposition allows the construction in three dimensions
and makes clear that the crucial solvability condition determining
the initial shock position essentially comes from conservation
of mass.
\end{remark}

\begin{remark}
The nozzle geometry in \cite{FangXin2021} and the external force
in the present problem play similar roles in determining the
shock position. In \cite{FangXin2021}, a nozzle with both expanding
and contracting portions may admit several shock solutions for
the same exit pressure. Here a change in the direction of the
external force can produce a similar phenomenon if
\eqref{lin_pos_mono} is removed.

For example, take
\begin{equation*}
\Phi(x_1)
=\frac{L}{2\pi}\left(\cos\frac{2\pi x_1}{L}-1\right),
\qquad
\bm f=-\si\nabla\Phi
=\si\left(\sin\frac{2\pi x_1}{L},0,0\right).
\end{equation*}
The regularity and compatibility conditions on the potential
are satisfied. However, the integral appearing in
\eqref{I_deriv} is
\begin{equation*}
\int_{\ml C}\p_{x_1}\Phi(t,x')\,dx'
=-4\sin\frac{2\pi t}{L},
\end{equation*}
which changes sign at $t=L/2$. Thus
\eqref{lin_pos_mono} no longer holds. Since $\Phi(L)=0$,
the position function in \eqref{pos_F} becomes
\begin{equation*}
F(t)=\frac{4\bar K L}{2\pi}
\left(\cos\frac{2\pi t}{L}-1\right).
\end{equation*}
It decreases on $(0,L/2)$ and increases on $(L/2,L)$.
Taking the constant exit pressure perturbation
$p_{ex}=-\bar K L/(2\pi)$, which satisfies the compatibility
and range conditions, the solvability condition
\eqref{lin_pos_eq} reads
\begin{equation*}
\frac{4\bar K L}{2\pi}
\left(\cos\frac{2\pi t}{L}-1\right)
=-\frac{4\bar K L}{2\pi}.
\end{equation*}
Consequently, there are two admissible initial shock positions,
\begin{equation*}
t_1=\frac L4,\qquad t_2=\frac{3L}{4}.
\end{equation*}
At both positions, the integral in \eqref{I_deriv} is nonzero,
so the derivative of the nonlinear solvability condition has
a nonzero leading coefficient. The nonlinear construction
can therefore be carried out near each initial position,
giving two distinct transonic shock solutions with the same
entrance state, external force and exit pressure. This
nonuniqueness shows how changes in the direction of the
external force can lead to instability of the background
shock, in a manner similar to the nozzle wall perturbations
considered in \cite{FangXin2021}.

\end{remark}

\begin{remark}
The stability mechanisms for steady transonic shocks in
divergent and flat nozzles are different, although the
deformation-curl decomposition is important in both problems.
In the divergent nozzle considered in \cite{WengXin2026a},
the expanding geometry introduces the shock displacement as
a principal term in the linearized Rankine-Hugoniot conditions.
These conditions give an algebraic equation for the shock front,
whose leading term is a scalar multiple of the perturbation
of the radial velocity. The entropy transports the information
about the shock position into the deformation-curl system for
the downstream velocity. After the curl equations are solved
and an auxiliary potential function is introduced, the remaining
problem reduces to a second order elliptic equation with a
nonlocal term. This equation has Neumann conditions on the
nozzle walls and at the exit, and an oblique derivative condition
on the shock. The latter is obtained from a Poisson equation
on the shock surface with Neumann boundary conditions.
The expanding geometry also gives the sign in the oblique
derivative condition needed for unique solvability.

For the constant background states in the present flat nozzle,
the linearized Rankine-Hugoniot conditions contain no such
leading term involving the shock displacement. Using
Bernoulli's law, the continuity equation can be rewritten as
the modified divergence equation \eqref{def_eq}, with a source
term due to the external force. Together with the curl equations,
its linearization leads to a second order elliptic equation
for an auxiliary potential function with Neumann conditions
on the entire boundary. Solvability requires a balance between
the source term and the boundary data. Integrating the
linearized modified divergence equation in the upstream and
downstream regions and using the linearized Rankine-Hugoniot
conditions gives \eqref{lin_pos_eq}. Thus the external force
enters the condition for the shock position through conservation
of mass.
\end{remark}

The rest of the paper is organized as follows.
Section~\ref{sec:linear} constructs the linear approximation of
the upstream and downstream flows and derives the solvability
condition from conservation of mass that determines the
approximating shock position. The shock slopes are then obtained
from the tangential Rankine-Hugoniot conditions.
In Section~\ref{sec:nonlinear}, the nonlinear supersonic flow is
constructed throughout the nozzle, and the downstream equations
and the boundary and shock conditions are reformulated on a fixed
domain. Section~\ref{sec:iter} constructs the
nonlinear iteration, determines the shock position through the
nonlinear solvability condition, and proves convergence of the
iteration. The proof of the main theorem is completed by verifying
that the fixed point satisfies the nonlinear equations and shock
conditions, establishing uniqueness in the stated perturbation
class, and returning to the original coordinates.

\section{The initial approximation and the shock position}
\label{sec:linear}
This section constructs a linear approximation of the flow and the shock.
The upstream problem supplies the data at the shock, while the
solvability condition for the downstream problem determines the
reference position $\bxi^*$. The tangential jump conditions then
determine the two shock derivatives.

\subsection{Linearized Rankine-Hugoniot conditions}
For a position $\bxi^*\in(0,L)$ to be determined below, the plane
$x_1=\bxi^*$ divides $\ml N$ into the two regions
\begin{equation}\label{lin_domains}
\begin{split}
\dN_-&=\{(x_1,x_2,x_3): 0<x_1<\bxi^*,\ (x_2,x_3)\in \ml{C}\},\\
\dN_+&=\{(x_1,x_2,x_3): \bxi^*<x_1<L,\ (x_2,x_3)\in \ml{C}\}.
\end{split}
\end{equation}

The linear perturbations on $\dN_\pm$ are denoted by
\begin{equation}\label{lin_vars}
\ml{\dU}_\pm
=(\du_{1\pm},\du_{2\pm},\du_{3\pm},\dS_\pm,\dB_\pm).
\end{equation}
The corresponding velocity and vorticity are
$\dot{\bm u}_\pm=(\du_{1\pm},\du_{2\pm},\du_{3\pm})$
and $\dot{\bm\om}_\pm:=\curl\dot{\bm u}_\pm$.
To linearize the shock conditions, write
\begin{equation*}
\ml U_\pm=\ml{\bU}_\pm+\ml{\dU}_\pm.
\end{equation*}

Expanding the shock conditions at the background states gives
\begin{equation}\label{lin_S_exp}
\ml S_i(\ml U_+,\ml U_-;\Phi)
=a_0\bigl(\du_{i+}-\du_{i-}\bigr)
+O\!\left(
\bigl(|\ml{\dU}_+|+|\ml{\dU}_-|+\si|\Phi|\bigr)^2
\right),
\quad i=2,3.
\end{equation}
Here
\begin{equation}\label{def_a0}
a_0=\frac{\brho\bu}{[\bp]}.
\end{equation}

The remaining jump functions have the expansions
\begin{equation}\label{def_G01}
G_i(\ml U_+,\ml U_-;\Phi)
=\beta_i^+\cdot\ml{\dU}_+
+\beta_i^-\cdot\ml{\dU}_-
+\si b_i\Phi
+O\!\left(
\bigl(|\ml{\dU}_+|+|\ml{\dU}_-|+\si|\Phi|\bigr)^2
\right),
\quad i=0,1,
\end{equation}
where
\begin{equation}\label{def_lin_coefs}
\begin{split}
\beta_0^\pm&=\pm \brho \bu(\frac{1-\bM_\pm^2}{\bu_\pm},0,0,-\frac{1}{(\ga-1)\bS_\pm},\frac{1}{\bc_\pm^2}),\\
b_0&=\brho \bu \lrb{\frac{1}{\bc_-^2}- \frac{1}{\bc_+^2}},\\
\beta_1^\pm&=\pm\bigl(
\brho\bu(1-\bM_\pm^2),0,0,
-\frac{\brho\bu^2+\bp}{(\ga-1)\bS_\pm},
\brho_\pm(1+\bM_\pm^2)
\bigr),\\
b_1&=\brho_-(1+\bM_-^2)-\brho_+(1+\bM_+^2).
\end{split}
\end{equation}

The linear supersonic problem is therefore considered first.

\subsection{Supersonic flow}
The upstream linear solution is first constructed on the whole
nozzle $\ml N$ and then restricted to $\dN_-$. It follows from \eqref{trans_BS}-\eqref{def_eq} that
\begin{equation}\label{lin_up_eq}
\begin{cases}
\bu_-\p_{x_1}(\dS_-,\dB_-)=0,\\
\p_{x_1}\dot\om_{1-}=0,\\
\p_{x_3}\du_{1-}-\p_{x_1}\du_{3-}
=\frac{1}{\bu_-}\lrb{
\p_{x_3}\dB_- -\frac{\brho_-^{\ga-1}}{\ga-1}\p_{x_3}\dS_-
},\\
\p_{x_1}\du_{2-}-\p_{x_2}\du_{1-}
=\frac{1}{\bu_-}\lrb{
-\p_{x_2}\dB_-+\frac{\brho_-^{\ga-1}}{\ga-1}\p_{x_2}\dS_-
},\\
(1-\bM_-^2)\p_{x_1}\du_{1-}+\p_{x_2}\du_{2-}+\p_{x_3}\du_{3-}=\si \frac{\bu_-}{\bc_-^2}\p_{x_1}\Phi,
\end{cases}
\end{equation}
in $\ml N$. The entrance and wall conditions are
\begin{equation}\label{lin_up_inlet}
\ml{\dot U}_-(0,x')=0,\qquad x'\in\ml{C},
\end{equation}
and
\begin{equation}\label{lin_up_wall}
\begin{cases}
\du_{2-}(x_1,\pm1,x_3)=0 ,\,\,(x_1,x_3)\in [0,L]\times [-1,1],\\
\du_{3-}(x_1,x_2,\pm1)=0 ,\,\,(x_1,x_2)\in [0,L]\times [-1,1].
\end{cases}
\end{equation}

Equations \eqref{lin_up_eq}, with
\eqref{lin_up_inlet}-\eqref{lin_up_wall}, form a
constant coefficient hyperbolic problem in the supersonic region.
The assumptions on $\Phi$ in Section~1 ensure the compatibility
needed in the following lemma.

\begin{lemma}\label{lem_lin_up}
There exists a unique solution $\ml{\dU}_-$ to
\eqref{lin_up_eq}-\eqref{lin_up_wall} satisfying
\begin{equation}\label{lin_up_est}
\|\ml{\dU}_-\|_{2,\al;\ml N}
\leq C\|\si\Phi\|_{3,\al;\ml N}
\leq C\si,
\end{equation}
where $C$ depends on the background state, the nozzle, and the
prescribed function $\Phi$, but not on $\si$.

Furthermore, the solution satisfies the following properties:
\begin{equation}\label{lin_up_invar}
\dS_-=\dB_-=\dot\om_{1-}\equiv 0\qquad \text{in }\,\ml N,
\end{equation}
and the following integral identity:
\begin{equation}\label{lin_up_int}
\int_{\ml C}\du_{1-}(t,x')\,dx'
=\si\frac{\bM_-^2}{\bu_-(1-\bM_-^2)}
\int_{\ml C}\Phi(t,x')\,dx',\qquad 0\leq t\leq L.
\end{equation}
\end{lemma}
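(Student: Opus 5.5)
Since $\bu_->0$, the transport equations $\bu_-\p_{x_1}(\dS_-,\dB_-)=0$ and $\p_{x_1}\dot\om_{1-}=0$ with zero entrance data give $\dS_-=\dB_-\equiv0$ directly. The vorticity $\dot\om_{1-}$ needs its entrance value: from $\dot U_-(0,x')=0$ the tangential derivatives of $\du_{2-},\du_{3-}$ vanish at $x_1=0$, so $\dot\om_{1-}(0,x')=0$ and hence $\dot\om_{1-}\equiv0$. This gives \eqref{lin_up_invar}. The curl equations in \eqref{lin_up_eq} then say that $\curl\dot{\bm u}_-=0$ in $\ml N$, and on the simply connected box we seek $\dot{\bm u}_-=\nabla\phi$ with $\phi(0,x')=0$.

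Substituting into the divergence equation gives the linear wave equation
\[
(1-\bM_-^2)\p_{x_1}^2\phi+\p_{x_2}^2\phi+\p_{x_3}^2\phi=\si\frac{\bu_-}{\bc_-^2}\p_{x_1}\Phi ,
\]
with $x_1$ as time, since $\bM_->1$. The data are $\phi=\p_{x_1}\phi=0$ at $x_1=0$; we need $\p_{x_1}\phi(0,\cdot)=\du_{1-}(0,\cdot)=0$, and the transverse derivatives vanish because $\phi(0,\cdot)=0$. The wall conditions become Neumann conditions $\p_{x_2}\phi=0$ at $x_2=\pm1$ and $\p_{x_3}\phi=0$ at $x_3=\pm1$. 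Two routes give existence and uniqueness:
\begin{itemize}
\item extend $\Phi$ evenly across the walls and then periodically in $x_2,x_3$ (period 4), solve the Cauchy problem on $\mathbb R\times\mathbb T^2$, and observe that the solution inherits the even symmetry, hence the Neumann conditions;
\item equivalently, expand in the cosine basis $\cos(k\pi(x_2+1)/2)\cos(m\pi(x_3+1)/2)$, which decouples the problem into ODEs in $x_1$.
\end{itemize}
Uniqueness follows from the energy estimate for the wave equation with Neumann walls.

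The main obstacle is the $C^{2,\al}$ estimate for $\dot{\bm u}_-=\nabla\phi$ up to the edges of the walls. This requires $\phi\in C^{3,\al}$, while hyperbolic problems do not gain regularity, so Schauder-type estimates are not free. I would use the reflection and periodic extension, which avoids corners entirely. The conditions \eqref{phi_wall} and \eqref{phi_wall3} guarantee that the even extension of $\p_{x_1}\Phi$ stays $C^{2,\al}$ across the walls: the odd-order normal derivatives (first and third) vanish there, so the reflected function matches through third order. One also has to handle $\Phi$ only being defined for $0\le x_1\le L$; only $x_1\ge0$ is needed anyway.

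For the estimates themselves:
\begin{itemize}
\item Standard energy estimates give $H^s$ bounds for every $s$ the data allow.
\item H\"older bounds need more care. One option is to note that the inhomogeneous Cauchy problem with zero data is solved by Duhamel's formula, and in three space-time dimensions (two space dimensions) the fundamental solution is locally integrable. Working componentwise then gives $\|\nabla\phi\|_{2,\al}\le C\|\si\p_{x_1}\Phi\|_{2,\al}$, possibly with a small loss absorbed by the extra derivative available in $\Phi\in C^{3,\al}$.
\item Alternatively, differentiate the equation and use Sobolev embedding from high-order energy estimates. This is the fallback if $\Phi$ is only $C^{3,\al}$: one may lose $\al$, and I would then apply the $C^{2,\al}$ estimate to $\du$ directly via the first-order system structure.
\end{itemize}

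Finally, for \eqref{lin_up_int}, integrate the divergence equation over the cross section $\ml C$ at fixed $x_1=t$. The transverse terms vanish by the wall conditions, leaving
\[
(1-\bM_-^2)\frac{d}{dt}\int_{\ml C}\du_{1-}=\si\frac{\bu_-}{\bc_-^2}\frac{d}{dt}\int_{\ml C}\Phi .
\]
Both integrals vanish at $t=0$, by the entrance condition and by \eqref{phi_entrance}. Integrating in $t$ and using $\bu_-/\bc_-^2=\bM_-^2/\bu_-$ gives \eqref{lin_up_int}.
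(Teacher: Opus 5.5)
Your argument for existence, uniqueness, \eqref{lin_up_invar} and \eqref{lin_up_int} is the paper's. It uses zero entrance data in the transport equations, a potential for the curl-free velocity, the $2+1$ dimensional wave equation with zero Cauchy data and Neumann walls, even and periodic reflection justified by \eqref{phi_wall}--\eqref{phi_wall3}, and integration of the divergence equation over cross sections. The gap is the estimate \eqref{lin_up_est}, which you correctly flag as the main obstacle but do not establish. Your Duhamel argument does not deliver the inequality you claim. Local integrability of the fundamental solution yields at best $\|\phi\|_{2,\al}\le C\|f\|_{2,\al}$ for $f=\si\frac{\bu_-}{\bc_-^2}\p_{x_1}\Phi$, that is, $\nabla\phi\in C^{1,\al}$, which is one derivative short. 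There is also no spare derivative in $\Phi$ to absorb a loss, since $f\in C^{2,\al}$ already uses all of $\Phi\in C^{3,\al}$. The energy route loses more than $\al$: it gives $\nabla\phi(x_1,\cdot)\in H^{2+\al'}$ in $x'$ for $\al'<\al$, and Sobolev embedding in two variables costs a full derivative, leaving roughly $C^{1,\al'}$.

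The paper's proof does not close this step either: after the even extension it simply asserts $\|\nabla\dot\phi_-\|_{2,\al;\ml N}\le C\|\si\Phi\|_{3,\al;\ml N}$. In fact no argument can give this for general admissible $\Phi\in C^{3,\al}$, because waves focus in two space dimensions. Since $\Phi(0,\cdot)=0$, one has $\phi=\p_{x_1}\psi$, where $\psi$ solves the same problem with source $\si\frac{\bu_-}{\bc_-^2}\Phi$. Take $\Phi=\chi(x_1)w$, where $\chi\ge0$ is nontrivial with $\chi\in C_c^\infty((0,T))$. Let $w$ solve the homogeneous equation $(1-\bM_-^2)\p_{x_1}^2w+\p_{x_2}^2w+\p_{x_3}^2w=0$, have frequency of order $2^j$, be concentrated on a small ring around $x'=0$, and focus at some $x_1=T_f\in(T,L)$. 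A short Duhamel computation (the resonant part of the integral) then gives the following. For $x_1>T$, $\nabla\phi=\theta\nabla w$ up to terms smaller by a factor $2^{-j}$, where $\theta$ is a nonzero multiple of $\si\int\chi$. Between $\operatorname{supp}\chi$ and $T_f$, $\sup_{x'}|\nabla w|$ grows by a factor of order $2^{j/2}$, so $\|\nabla\phi\|_{2,\al}\gtrsim 2^{j/2}\|\si\Phi\|_{3,\al}$. The solution map is continuous into $L^2$ by energy estimates. By the closed graph theorem there is therefore even an admissible $\Phi\in C^{3,\al}$ with $\ml{\dU}_-\notin C^{2,\al}$, so letting $C$ depend on $\Phi$ does not help. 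What your argument, and the paper's, actually proves is existence, uniqueness, \eqref{lin_up_invar} and \eqref{lin_up_int}. The $C^{2,\al}$ bound needs either a stronger hypothesis on $\Phi$ or a statement with a loss of derivatives. One suitable hypothesis is high-order Sobolev regularity compatible with the reflections, followed by energy estimates and embedding.
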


\begin{proof}
The transport equations in \eqref{lin_up_eq} and the entrance data
$\dS_-(0,x')=\dB_-(0,x')=\dot\om_{1-}(0,x')=0$ give
\begin{equation}
\dS_-=\dB_-=\dot\om_{1-}\equiv0 \qquad \text{in }\,\ml N.
\end{equation}

The velocity $\dot{\bm u}_-=(\du_{1-},\du_{2-},\du_{3-})$ then
solves
\begin{equation}
\begin{cases}
\p_{x_2}\du_{3-}-\p_{x_3}\du_{2-}=0,\\
\p_{x_3}\du_{1-}-\p_{x_1}\du_{3-}=0,\\
\p_{x_1}\du_{2-}-\p_{x_2}\du_{1-}=0,\\
(1-\bM_-^2)\p_{x_1}\du_{1-}+\p_{x_2}\du_{2-}+\p_{x_3}\du_{3-}=\si \frac{\bu_-}{\bc_-^2}\p_{x_1}\Phi, 
\end{cases}
\end{equation}
with boundary conditions
\begin{equation}
\begin{cases}
\du_{i-}(0,x')=0,\,i=1,2,3,\,x'\in \ml{C},\\
\du_{2-}(x_1,\pm1,x_3)=0 ,\,(x_1,x_3)\in [0,L]\times [-1,1],\\
\du_{3-}(x_1,x_2,\pm1)=0 ,\,(x_1,x_2)\in [0,L]\times [-1,1].
\end{cases}
\end{equation}

Since this velocity is curl free and $\ml N$ is simply connected,
there exists a potential $\dot\phi_-(x_1,x_2,x_3)$ such that
\begin{equation*}
\p_{x_1}\dot\phi_-=\du_{1-},\,\p_{x_2}\dot\phi_-=\du_{2-},\,
\p_{x_3}\dot\phi_-=\du_{3-}.
\end{equation*}

Consequently,  $\dot\phi_-$ satisfies
\begin{equation}\label{lin_phi_up}
\begin{cases}
(1-\bM_-^2)\p_{x_1}^2\dot\phi_-
+\p_{x_2}^2\dot\phi_-+\p_{x_3}^2\dot\phi_-
=\si\frac{\bu_-}{\bc_-^2}\p_{x_1}\Phi,\\
\p_{x_i} \dot\phi_-(0,x')=0,\,i=1,2,3,\,x'\in \ml{C},\\
\p_{x_2}\dot\phi_-(x_1,\pm 1,x_3)=0 ,\,(x_1,x_3)\in [0,L]\times [-1,1],\\
\p_{x_3}\dot\phi_-(x_1,x_2,\pm 1)=0 ,\,(x_1,x_2)\in [0,L]\times [-1,1].
\end{cases}
\end{equation}

Without loss of generality, we may take $\dot\phi_-(0,x')=0$ for $\forall x'\in \ml C$.
The wall compatibility conditions \eqref{phi_wall} and
\eqref{phi_wall3} allow even extension
of the data across the lateral walls. There exists a unique
solution $\dot\phi_-$ to \eqref{lin_phi_up} with
\begin{equation*}
\|\nabla\dot\phi_-\|_{2,\al;\ml N}
\leq C\|\si\Phi\|_{3,\al;\ml N}.
\end{equation*}
Uniqueness preserves the reflection symmetry, so the lateral wall
conditions hold. This proves the existence, uniqueness, and estimate
stated in the lemma.

Integrating the deformation equation in \eqref{lin_phi_up} over $[0,t]\times\ml C$
and using the entrance and wall conditions gives
\begin{equation*}
(1-\bM_-^2)\int_{\ml C}\du_{1-}(t,x')\,dx'
=\si\frac{\bu_-}{\bc_-^2}
\int_{\ml C}\bigl(\Phi(t,x')-\Phi(0,x')\bigr)\,dx'.
\end{equation*}
Since $\Phi(0,x')=0$ and $\bM_-^2/\bu_-=\bu_-/\bc_-^2$,
this proves \eqref{lin_up_int}.
\end{proof}

The upstream solution is now known at every section of the nozzle.

\subsection{\texorpdfstring{The determination of $\ml{\dU}_+$ and $\bxi^*$}{The subsonic flow and the shock position}}
Similarly, the perturbation $\ml{\dU}_+$ satisfies the linearized Euler system
\begin{equation}\label{lin_down_eq}
\begin{cases}
\bu_+\p_{x_1}(\dS_+,\dB_+)=0,\\
\p_{x_1}\dot\om_{1+}=0,\\
\p_{x_3}\du_{1+}-\p_{x_1}\du_{3+}
=\frac{1}{\bu_+}\lrb{
\p_{x_3}\dB_+-\frac{\brho_+^{\ga-1}}{\ga-1}\p_{x_3}\dS_+
},\\
\p_{x_1}\du_{2+}-\p_{x_2}\du_{1+}
=\frac{1}{\bu_+}\lrb{
-\p_{x_2}\dB_++\frac{\brho_+^{\ga-1}}{\ga-1}\p_{x_2}\dS_+
},\\
(1-\bM_+^2)\p_{x_1}\du_{1+}+\p_{x_2}\du_{2+}+\p_{x_3}\du_{3+}=\si \frac{\bu_+}{\bc_+^2}\p_{x_1}\Phi,
\end{cases}
\end{equation}
in $\dN_+$. The slip conditions on the nozzle walls are
\begin{equation}\label{lin_down_wall}
\begin{cases}
\du_{2+}(x_1,\pm1,x_3)=0 ,\,(x_1,x_3)\in [\bxi^*,L]\times [-1,1],\\
\du_{3+}(x_1,x_2,\pm1)=0 ,\,(x_1,x_2)\in [\bxi^*,L]\times [-1,1].
\end{cases}
\end{equation}

Once the supersonic state $\ml{\dU}_-$ is known, dropping the high order terms in \eqref{def_G01} yields
\begin{equation}\label{lin_RH}
\beta_i^+\cdot \ml{\dU}_+(\bxi^*,x') + \beta_i^-\cdot \ml{\dU}_-(\bxi^*,x')+\si b_i \Phi(\bxi^*,x')=0,\,i=0,1,
\end{equation}
and
\begin{equation}\label{jump_dot_B}
\dB_-(\bxi^*,x')=\dB_+(\bxi^*,x').
\end{equation}

Then, we have the following results.

\begin{lemma}\label{RH_bd_condi}
The boundary values on the shock $x_1=\bxi^*$ are
\begin{equation}\label{bd_dot_u1_S}
\begin{cases}
\du_{1+}(\bxi^*,x')=b_u^s \du_{1-}(\bxi^*,x')+ \si b_\Phi^u \Phi(\bxi^*,x'),\\
\dS_+(\bxi^*,x')=b_s^s \du_{1-}(\bxi^*,x')+\si b_\Phi^s \Phi(\bxi^*,x'),
\end{cases}
\end{equation}
where 
\begin{equation}\label{def_RH_coefs}
\begin{split}
b_u^s&=\frac{\bar{\rho}_-\bar{p}_-}{\bar{\rho}_+\bar{p}_+}\frac{\bM_-^2-1}{\bM_+^2-1},\\
b_s^s&=(\ga-1)\frac{\bS_+}{\bp_+\bu_-}(\bM_-^2-1)[\bp],\\
b_\Phi^s&=(\ga-1)\frac{\bS_+}{\bp_+}(b_1 -\bu_+ b_0),\\
b_\Phi^u&=\frac{1}{\bM_+^2-1}
\frac{(\brho_+\bu_+^2+\bp_+)b_0-\brho_+\bu_+b_1}
{\bp_+\brho_+}.
\end{split}
\end{equation}

\end{lemma}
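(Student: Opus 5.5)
The plan is to reduce \eqref{lin_RH} on the plane $x_1=\bxi^*$ to a $2\times2$ linear system in the two unknowns $\du_{1+}(\bxi^*,x')$ and $\dS_+(\bxi^*,x')$, and then solve it explicitly.

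First, by Lemma~\ref{lem_lin_up}, $\dS_-=\dB_-\equiv0$ in $\ml N$. In particular this holds on $x_1=\bxi^*$, and \eqref{jump_dot_B} then gives $\dB_+(\bxi^*,x')=0$. The vectors $\beta_i^\pm$ in \eqref{def_lin_coefs} have vanishing second and third components, so $\du_{2\pm},\du_{3\pm}$ drop out. With $m:=\brho\bu$ and $P:=\brho\bu^2+\bp$, \eqref{lin_RH} therefore becomes
\begin{equation*}
\begin{split}
&m\frac{1-\bM_+^2}{\bu_+}\du_{1+}-\frac{m}{(\ga-1)\bS_+}\dS_+
-m\frac{1-\bM_-^2}{\bu_-}\du_{1-}+\si b_0\Phi=0,\\
&m(1-\bM_+^2)\du_{1+}-\frac{P}{(\ga-1)\bS_+}\dS_+
-m(1-\bM_-^2)\du_{1-}+\si b_1\Phi=0,
\end{split}
\end{equation*}
with every function evaluated at $(\bxi^*,x')$.

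Next, I would verify that the determinant is nonzero. It equals
\begin{equation*}
\frac{m(1-\bM_+^2)}{(\ga-1)\bS_+}\Bigl(m-\frac{P}{\bu_+}\Bigr)
=-\frac{m(1-\bM_+^2)\bp_+}{(\ga-1)\bS_+\bu_+},
\end{equation*}
because $P-m\bu_+=\bp_+$. This is nonzero since $0<\bM_+<1$ by \eqref{bg_shock_cond}, so the system is uniquely solvable pointwise in $x'$.

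To get $\dS_+$, I would subtract $\bu_+$ times the first equation from the second, which eliminates $\du_{1+}$. The $\dS_+$ coefficient becomes $-\bp_+/((\ga-1)\bS_+)$. The $\du_{1-}$ coefficient becomes $-m(1-\bM_-^2)(\bu_--\bu_+)/\bu_-$. The momentum relation \eqref{bk_rh2} with \eqref{bk_rh1} gives $m(\bu_--\bu_+)=[\bp]$, so this coefficient equals $(\bM_-^2-1)[\bp]/\bu_-$. The $\Phi$ term becomes $\si(b_1-\bu_+b_0)\Phi$. Solving for $\dS_+$ gives exactly $b_s^s$ and $b_\Phi^s$.

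To get $\du_{1+}$, I would eliminate $\dS_+$ by taking $P$ times the first equation minus $m$ times the second. The $\du_{1+}$ coefficient is $m(1-\bM_+^2)\bp_+/\bu_+$. The $\Phi$ coefficient is $\si(Pb_0-mb_1)$; dividing by $m(1-\bM_+^2)\bp_+/\bu_+=\brho_+\bp_+(1-\bM_+^2)$ reproduces $b_\Phi^u$.

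The step I expect to be the main obstacle is showing that the remaining $\du_{1-}$ coefficient equals $b_u^s$. After elimination it is
\begin{equation*}
\frac{(1-\bM_-^2)\bu_+\,\bigl(P-m\bu_-\bigr)}{\bu_-(1-\bM_+^2)\bp_+}
=\frac{(1-\bM_-^2)\bu_+\bp_-}{\bu_-(1-\bM_+^2)\bp_+},
\end{equation*}
using $P-m\bu_-=\bp_-$. Since $\bu_+/\bu_-=\brho_-/\brho_+$ by \eqref{bk_rh1}, this is $\frac{\brho_-\bp_-}{\brho_+\bp_+}\frac{\bM_-^2-1}{\bM_+^2-1}$, which is $b_u^s$. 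If any sign convention disagrees with \eqref{def_RH_coefs}, I would recheck the signs in $\beta_i^\pm$ directly from the expansion of $G_0,G_1$ in \eqref{def_G01}.
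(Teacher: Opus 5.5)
Your proposal is correct and follows essentially the same route as the paper: both use $\dS_-=\dB_-=0$ and $\dB_+=0$ on the shock to reduce \eqref{lin_RH} to the $2\times 2$ system \eqref{lin_RH_pair}, and both obtain $\dS_+$ by subtracting $\bu_+$ times the first equation from the second. The only difference is that the paper finds $\du_{1+}$ by substituting the $\dS_+$ formula back into the first equation, whereas you eliminate $\dS_+$ directly ($P$ times the first minus $m$ times the second), which produces the combination $Pb_0-mb_1$ in $b_\Phi^u$ immediately; your determinant check is a small detail the paper leaves implicit.
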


\begin{proof}
In this proof, write the background fluxes as
\begin{equation*}
m_0:=\brho_\pm\bu_\pm,
\qquad
m_1:=\brho_\pm\bu_\pm^2+\bp_\pm.
\end{equation*}
The first equation in \eqref{lin_down_eq}, together with
\eqref{jump_dot_B} and \eqref{lin_up_invar}, gives
$\dB_+\equiv0$. Hence, on $x_1=\bxi^*$,
\eqref{lin_RH} becomes
\begin{equation}\label{lin_RH_pair}
\begin{cases}
\displaystyle
\brho_+(1-\bM_+^2)\du_{1+}
-\frac{m_0}{(\ga-1)\bS_+}\dS_+
+\si b_0\Phi
=\brho_-(1-\bM_-^2)\du_{1-},\\[2mm]
\displaystyle
m_0(1-\bM_+^2)\du_{1+}
-\frac{m_1}{(\ga-1)\bS_+}\dS_+
+\si b_1\Phi
=m_0(1-\bM_-^2)\du_{1-}.
\end{cases}
\end{equation}
Subtracting $\bu_+$ times the first equation from the second one and
using
\begin{equation*}
m_1-\bu_+m_0=\bp_+,
\qquad
[\bp]=m_0(\bu_- -\bu_+),
\end{equation*}
we obtain
\begin{equation*}
-\frac{\bp_+}{(\ga-1)\bS_+}\dS_+
+\si(b_1-\bu_+b_0)\Phi
=\frac{1-\bM_-^2}{\bu_-}[\bp]\du_{1-}.
\end{equation*}
Therefore,
\begin{equation*}
\dS_+=b_s^s\du_{1-}+\si b_\Phi^s\Phi.
\end{equation*}
Substituting this identity into the first equation in
\eqref{lin_RH_pair} and using the background
Rankine-Hugoniot relations gives
\begin{equation*}
\du_{1+}=b_u^s\du_{1-}+\si b_\Phi^u\Phi.
\end{equation*}
This proves \eqref{bd_dot_u1_S}.
\end{proof}

For later use, denote the two boundary data in
\eqref{bd_dot_u1_S} by
\begin{equation}\label{def_lin_shock_data}
\begin{aligned}
\dot g_1(\ml{\dU}_-,\Phi;\bxi^*)(x')
&:=b_u^s\du_{1-}(\bxi^*,x')+\si b_\Phi^u\Phi(\bxi^*,x'),\\
\dot g_4(\ml{\dU}_-,\Phi;\bxi^*)(x')
&:=b_s^s\du_{1-}(\bxi^*,x')+\si b_\Phi^s\Phi(\bxi^*,x').
\end{aligned}
\end{equation}
With these arguments fixed, we write $\dot g_1(x')$ and $\dot g_4(x')$.
Then $\du_{1+}(\bxi^*,x')=\dot g_1(x')$, and the entropy satisfies
\begin{equation}\label{lin_S_problem}
\p_{x_1}\dS_+=0\quad\text{in }\dN_+,
\qquad \dS_+(\bxi^*,x')=\dot g_4(x'),
\end{equation}
and hence
\begin{equation}\label{lin_S_sol}
\dS_+(x_1,x')=\dot g_4(x')\qquad\text{in }\dN_+.
\end{equation}

At the exit of the nozzle, the pressure is prescribed by
\begin{equation}
p_+(L,x')=\bp_+ +\si p_{ex}(x').
\end{equation}

Expanding the pressure function in \eqref{def_rho_p} at either
background state gives
\begin{equation}\label{Exp_pressure}
\begin{split}
p(\ml U;\Phi)
={}&\bp-\brho\bu\du_1
-\frac{\bp}{(\ga-1)\bS}\dS
+\brho\dB-\si\brho\Phi
+O\left(
\bigl(|\ml{\dU}|+\si|\Phi|\bigr)^2
\right).
\end{split}
\end{equation}

Since $\dB_+=0$, the linearized pressure condition is
\begin{equation}\label{lin_exit_p}
\si p_{ex}(x')=-\brho \bu \du_{1+}(L,x')-\frac{\bp_+ }{(\ga-1)\bS_+} \dS_{+}(L,x')-\si \brho_+ \Phi(L,x').
\end{equation}

Solving \eqref{lin_exit_p} for $\du_{1+}(L,x')$ gives
the exit datum
\begin{equation}\label{def_lin_exit_data}
\begin{aligned}
&\dot g_e(\ml{\dU}_-,\Phi,p_{ex};\bxi^*)(x')\\
&\quad:=-\frac{\bp_+}{(\ga-1)\bS_+\brho\bu}\dot g_4(x')
-\frac{\si}{\brho\bu}p_{ex}(x')-\frac{\si}{\bu_+}\Phi(L,x')\\
&\quad=b_u^{ex}\du_{1-}(\bxi^*,x')
-\frac{\si}{\brho\bu}p_{ex}(x')-\frac{\si}{\bu_+}\Phi(L,x')
-\si\frac{\bp_+b_\Phi^s}{(\ga-1)\bS_+\brho\bu}\Phi(\bxi^*,x'),
\end{aligned}
\end{equation}
where
\begin{equation}\label{def_bue}
b_u^{ex}=\frac{1-\bM_-^2}{\brho_-\bu_-^2}[\bp].
\end{equation}
Thus $\du_{1+}(L,x')=\dot g_e(x')$, with the data and
position arguments omitted as above.

The two tangential Rankine-Hugoniot conditions prescribe the shock
derivatives by
\begin{equation}\label{def_lin_shock_shape}
\p_{x_i}\dot\xi
=\frac{\brho\bu}{[\bp]}
\bigl(\du_{i+}-\du_{i-}\bigr)(\bxi^*,x'),
\qquad i=2,3.
\end{equation}
The equality
$\p_{x_3}\p_{x_2}\dot\xi=\p_{x_2}\p_{x_3}\dot\xi$
requires the compatibility condition
\begin{equation}\label{lin_shape_comp}
(\p_{x_2}\du_{3+}-\p_{x_3}\du_{2+})(\bxi^*,x')
=(\p_{x_2}\du_{3-}-\p_{x_3}\du_{2-})(\bxi^*,x').
\end{equation}
Since $\ml C$ is simply connected, this condition is also
sufficient. By the definition of the first vorticity component, it
is equivalent to
\begin{equation}\label{lin_shock_vort}
\dot\om_{1+}(\bxi^*,x')
=\dot\om_{1-}(\bxi^*,x').
\end{equation}

Together with $\p_{x_1}\dot\om_{1+}=0$ in
\eqref{lin_down_eq}, this gives
\begin{equation*}
\dot\om_{1+}\equiv0
\qquad\text{in }\dN_+.
\end{equation*}

We are now in a position to determine the shock location $x_1=\bxi^*$, and to solve the downstream state $(\du_{1+},\du_{2+},\du_{3+})$. Denote the source term by
\begin{equation}\label{def_lin_force}
\dot F_0(\x):=\si\frac{\bu_+}{\bc_+^2}\p_{x_1}\Phi(\x).
\end{equation}
The velocity satisfies
\begin{equation}\label{lin_vel_eq}
\begin{cases}
(1-\bM_+^2)\p_{x_1}\du_{1+}
+\p_{x_2}\du_{2+}+\p_{x_3}\du_{3+}=\dot F_0,\\
\p_{x_2}\du_{3+}-\p_{x_3}\du_{2+}=0,\\
\p_{x_3}\du_{1+}-\p_{x_1}\du_{3+}
=-\dfrac{\brho_+^{\ga-1}}{(\ga-1)\bu_+}\p_{x_3}\dS_+,\\
\p_{x_1}\du_{2+}-\p_{x_2}\du_{1+}
=\dfrac{\brho_+^{\ga-1}}{(\ga-1)\bu_+}\p_{x_2}\dS_+,
\end{cases}
\qquad\text{in }\dN_+,
\end{equation}
with boundary conditions
\begin{equation}\label{lin_vel_bc}
\begin{cases}
\du_{1+}(\bxi^*,x')=\dot g_1(x'),\qquad
\du_{1+}(L,x')=\dot g_e(x'),\\
\du_{2+}(x_1,\pm1,x_3)=0,\qquad
\du_{3+}(x_1,x_2,\pm1)=0.
\end{cases}
\end{equation}

The following compatibility condition determines whether this
velocity problem can be solved. It will also determine $\bxi^*$.

\begin{lemma}\label{lem_lin_solva}
For $\bxi^*\in(0,L)$, problem \eqref{lin_vel_eq}-\eqref{lin_vel_bc}
admits a unique solution if and only if
\begin{equation}\label{comp_condi}
\iint_{\ml{C}} p_{ex}(x')\,dx'
=\bar K \iint_{\ml{C}} \Phi(\bxi^*,x')\, dx'
-\frac{\brho_+}{1-\bM_+^2} \iint_{\ml{C}}\Phi(L,x')\, dx',
\end{equation}
where
\begin{equation}\label{def_barK_0}
\bar K=
\left\{
\frac{\bM_-^2}{\bu_-^2}
+\frac{1}{\bu_+\bu_-}
\frac{\bM_+^2}{1-\bM_+^2}
\right\}[\bp]
+\bu_+b_0-b_1-\brho\bu\,b_\Phi^u.
\end{equation}
\end{lemma}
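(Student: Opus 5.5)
The plan is to reduce \eqref{lin_vel_eq}-\eqref{lin_vel_bc} to a Neumann problem for a scalar potential, for which solvability is the single integral compatibility condition, and then to compute that condition explicitly with Lemma~\ref{lem_lin_up} and Lemma~\ref{RH_bd_condi}.

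\emph{Step 1: splitting off the vortical part.} By \eqref{lin_S_sol}, $\dS_+=\dot g_4(x')$ does not depend on $x_1$. The curl data in \eqref{lin_vel_eq} are therefore exactly the curl of
\[
\bigl(-\tfrac{\brho_+^{\ga-1}}{(\ga-1)\bu_+}\dot g_4(x'),0,0\bigr),
\]
and this field is divergence free since $\p_{x_1}\dot g_4=0$. Because $\dot g_4$ involves $\du_{1-}(\bxi^*,\cdot)$ and $\Phi(\bxi^*,\cdot)$, its normal derivatives vanish on the walls by \eqref{phi_wall} and the reflection symmetry from Lemma~\ref{lem_lin_up}. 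I then write
\[
\dot{\bm u}_+=\bigl(-\tfrac{\brho_+^{\ga-1}}{(\ga-1)\bu_+}\dot g_4+\p_{x_1}\dot\phi_+,\ \p_{x_2}\dot\phi_+,\ \p_{x_3}\dot\phi_+\bigr).
\]
The remaining velocity is curl free on the simply connected box, so this representation holds for every solution.

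\emph{Step 2: the Neumann problem.} The potential $\dot\phi_+$ solves
\[
(1-\bM_+^2)\p_{x_1}^2\dot\phi_++\p_{x_2}^2\dot\phi_++\p_{x_3}^2\dot\phi_+=\dot F_0 \quad\text{in }\dN_+.
\]
It has homogeneous Neumann conditions on the walls. On $x_1=\bxi^*$ and $x_1=L$ the Neumann data are $\dot g_1+\tfrac{\brho_+^{\ga-1}}{(\ga-1)\bu_+}\dot g_4$ and $\dot g_e+\tfrac{\brho_+^{\ga-1}}{(\ga-1)\bu_+}\dot g_4$, respectively.

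This is a uniformly elliptic operator with constant coefficients. The compatibility conditions \eqref{phi_wall}, \eqref{phi_wall3} and \eqref{exit_comp} allow even reflection across the walls, which gives a periodic-in-$x'$ problem and hence $C^{2,\alpha}$ regularity up to the edges. By the Fredholm alternative for the Neumann problem, a solution exists if and only if the divergence-theorem balance holds, and then it is unique up to an additive constant. That constant does not affect $\dot{\bm u}_+$, so the velocity is unique.

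Integrating the first equation of \eqref{lin_vel_eq} over $\dN_+$ and using the slip conditions, the balance reads
\[
(1-\bM_+^2)\int_{\ml C}\bigl(\dot g_e-\dot g_1\bigr)\,dx'=\int_{\dN_+}\dot F_0\,d\x=\si\frac{\bu_+}{\bc_+^2}\int_{\ml C}\bigl(\Phi(L,x')-\Phi(\bxi^*,x')\bigr)\,dx'.
\]
The $\dot g_4$ contributions cancel in the difference of the two boundary data, which is why only $\dot g_e-\dot g_1$ appears.

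\emph{Step 3: identifying the condition.} I substitute \eqref{def_lin_shock_data} and \eqref{def_lin_exit_data} into this balance. The integral $\int_{\ml C}\du_{1-}(\bxi^*,x')\,dx'$ is replaced by \eqref{lin_up_int}. After this, every term is a multiple of $\si\int_{\ml C}p_{ex}$, $\si\int_{\ml C}\Phi(\bxi^*,\cdot)$ or $\si\int_{\ml C}\Phi(L,\cdot)$. Dividing by $\si$ and solving for $\int_{\ml C} p_{ex}$, the $\Phi(L)$ terms combine with coefficient
\[
-\brho\bu\Bigl(\frac{1}{\bu_+}+\frac{\bu_+}{\bc_+^2(1-\bM_+^2)}\Bigr)=-\frac{\brho_+}{1-\bM_+^2}.
\]

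The $\Phi(\bxi^*)$ coefficient collects four contributions:
\begin{enumerate}
\item $(b_u^{ex}-b_u^s)$ times the factor $\frac{\bM_-^2}{\bu_-(1-\bM_-^2)}$ from \eqref{lin_up_int};
\item the $b_\Phi^s$ term from $\dot g_e$;
\item $-\brho\bu\,b_\Phi^u$ from $\dot g_1$;
\item the force term $\brho\bu\,\bu_+/(\bc_+^2(1-\bM_+^2))$.
\end{enumerate}
I will simplify this sum using $b_\Phi^s=(\ga-1)\tfrac{\bS_+}{\bp_+}(b_1-\bu_+b_0)$ and the background relations \eqref{bk_rh1}-\eqref{bk_rh3}, such as $[\bp]=\brho\bu(\bu_--\bu_+)$ and $\bc^2=\ga\bp/\brho$. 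The result should reduce to $\bar K$ in \eqref{def_barK_0}.

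This bookkeeping is the main obstacle. The Fredholm part is standard, but matching the coefficient exactly to \eqref{def_barK_0} requires care with signs and with how $b_u^s-b_u^{ex}$ simplifies. I would first verify the $\Phi(L)$ coefficient, as a check on normalization, and then group the $\Phi(\bxi^*)$ terms by their origin as listed above.
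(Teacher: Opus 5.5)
Your proposal follows the paper's proof essentially step by step: the same potential for $(\du_{1+}+\frac{\brho_+^{\ga-1}}{(\ga-1)\bu_+}\dS_+,\du_{2+},\du_{3+})$, the same Neumann compatibility condition with the $\dot g_4$ terms cancelling, and the same substitution of \eqref{lin_up_int}; your $\Phi(L)$ coefficient is correct. The bookkeeping you defer does close, and the paper likewise leaves it as a direct computation: the $b_u^{ex}$ term gives $\frac{\bM_-^2}{\bu_-^2}[\bp]$ and the $b_\Phi^s$ term gives $\bu_+b_0-b_1$, while $\brho_-\bp_-\bM_-^2=\brho_+\bp_+\bM_+^2$ turns the $b_u^s$ term into $-\frac{\brho\bu}{\bu_-}\frac{\bM_+^2}{1-\bM_+^2}$, which combines with the force term into $\frac{[\bp]}{\bu_+\bu_-}\frac{\bM_+^2}{1-\bM_+^2}$.
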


\begin{proof}
The curl equations in \eqref{lin_vel_eq}, together with
$\p_{x_1}\dS_+=0$, show that the field
\begin{equation*}
\lrb{
\du_{1+}+\brho_+^{\ga-1}\frac{\dS_+}{(\ga-1)\bu_+},\du_{2+},\du_{3+}
}
\end{equation*}
is curl free. Since $\dN_+$ is simply
connected, there exists a potential $\dot\phi_+(\x)$ such that
\begin{equation}\label{lin_phi_vel}
\p_{x_1}\dot\phi_+
=\du_{1+}+\frac{\brho_+^{\ga-1}}{(\ga-1)\bu_+}\dS_+,
\qquad
\p_{x_i}\dot\phi_+=\du_{i+},\quad i=2,3.
\end{equation}

The boundary value problem for $\dot\phi_+$ reads
\begin{equation}\label{lin_phi_down}
\begin{cases}
(1-\bM_+^2)\p_{x_1}^2\dot\phi_+ + \p_{x_2}^2\dot\phi_+ + \p_{x_3}^2\dot\phi_+
=\si \frac{\bu_+}{\bc_+^2} \p_{x_1}\Phi,\\
\p_{x_1}\dot\phi_+(\bxi^*,x')=\dot g_1(x')
+\frac{\brho_+^{\ga-1}}{(\ga-1)\bu_+}\dot g_4(x'),\\
\p_{x_1}\dot\phi_+(L,x')=-\si \frac{1}{\brho \bu } p_{ex}(x')
-\si\frac{1}{\bu_+}\Phi(L,x'),\\
\p_{x_2}\dot\phi_+(x_1,\pm1,x_3)=0,\\
\p_{x_3}\dot\phi_+(x_1,x_2,\pm 1)=0,
\end{cases}
\end{equation}

Integrating the first equation in \eqref{lin_phi_down} and using
the boundary conditions gives
\begin{equation*}
\begin{split}
&\iiint_{[\bxi^*,L]\times\ml C}
\bigl((1-\bM_+^2)\p_{x_1}^2\dot\phi_+
+\p_{x_2}^2\dot\phi_+
+\p_{x_3}^2\dot\phi_+\bigr)\,d\x\\
=&(1-\bM_+^2)\iint_{\ml C}
\bigl(\p_{x_1}\dot\phi_+(L,x')
-\p_{x_1}\dot\phi_+(\bxi^*,x')\bigr)\,dx'\\
=&(1-\bM_+^2)\iint_{\ml C}
\bigl(\du_{1+}(L,x')-\du_{1+}(\bxi^*,x')\bigr)\,dx'\\
=&\si\frac{\bu_+}{\bc_+^2}\iint_{\ml C}
\bigl(\Phi(L,x')-\Phi(\bxi^*,x')\bigr)\,dx'.
\end{split}
\end{equation*}

In terms of the data just defined, this is
\begin{equation}\label{lin_vel_comp}
(1-\bM_+^2)\int_{\ml C}(\dot g_e-\dot g_1)\,dx'
=\int_{\dN_+}\dot F_0\,d\x.
\end{equation}
Applying the boundary conditions for $\du_{1+}$ at $x_1=\bxi^*$ and $x_1=L$, respectively, yields
\begin{equation}
\begin{split}
&\du_{1+}(L,x')-\du_{1+}(\bxi^*,x')\\
=&b_u^{ex}\du_{1-}(\bxi^*,x')-b_u^s \du_{1-}(\bxi^*,x')-\si \frac{1}{\brho \bu } p_{ex}(x')\\
&-\si\frac{1}{\bu_+}\Phi(L,x')
-\si \frac{\bp_+ }{(\ga-1)\bS_+}\frac{b_\Phi^s}{\brho \bu} \Phi(\bxi^*,x')-
\si b_\Phi^u \Phi(\bxi^*,x')
\end{split}
\end{equation}
Integrating this difference over $\ml C$ and using
\eqref{lin_vel_comp} gives
\begin{equation}
\begin{split}
&\iint_{\ml C}
\bigl(\du_{1+}(L,x')-\du_{1+}(\bxi^*,x')\bigr)\,dx'\\
=&(b_u^{ex}-b_u^s) \iint_{\ml{C}}\du_{1-}(\bxi^*,x')\,dx'
-\si\frac{1}{\brho \bu}\iint_{\ml{C}} p_{ex}(x')\,dx'\\
&-\si\iint_{\ml C}
\left(
\frac{1}{\bu_+}\Phi(L,x')
+\left(
\frac{b_1-\bu_+b_0}{\brho\bu}+b_\Phi^u
\right)\Phi(\bxi^*,x')
\right)\,dx'\\
=&\si\frac{1}{\bu_+}\frac{\bM_+^2}{1-\bM_+^2}
\iint_{\ml C}
\bigl(\Phi(L,x')-\Phi(\bxi^*,x')\bigr)\,dx'.
\end{split}
\end{equation}

Substituting the upstream identity \eqref{lin_up_int} and
collecting the coefficients of the potential gives
\begin{equation}
\begin{split}
\iint_{\ml{C}} p_{ex}(x')\,dx'
=\bar K \iint_{\ml{C}} \Phi(\bxi^*,x')\, dx'
-\frac{\brho_+}{1-\bM_+^2} \iint_{\ml{C}}\Phi(L,x')  \,dx',
\end{split}
\end{equation}
where
\begin{equation}\label{def_barK}
\begin{split}
\bar K
=&\brho\bu\Biggl\{
\frac{b_u^{ex}-b_u^s}{\bu_-}
\frac{\bM_-^2}{1-\bM_-^2}
+\frac{1}{\bu_+}
\frac{\bM_+^2}{1-\bM_+^2}
-\frac{b_1-\bu_+b_0}{\brho\bu}
-b_\Phi^u
\Biggr\}.
\end{split}
\end{equation}
A direct computation gives the expression for $\bar K$ in
\eqref{def_barK_0}.

Thus \eqref{comp_condi} is necessary. Conversely, assume that
\eqref{comp_condi} holds. The calculation above shows that it is
exactly the compatibility condition for the Neumann problem
\eqref{lin_phi_down}. Hence this problem has a solution, unique up
to an additive constant. Define
\begin{equation*}
\du_{1+}
:=\p_{x_1}\dot\phi_+
-\frac{\brho_+^{\ga-1}}{(\ga-1)\bu_+}\dS_+,
\qquad
\du_{i+}:=\p_{x_i}\dot\phi_+,
\quad i=2,3.
\end{equation*}
Then $\dot{\bm u}_+$ satisfies \eqref{lin_vel_eq} and its
boundary conditions. The upstream wall conditions and \eqref{phi_wall} give
\begin{equation*}
\p_{x_i}\dot g_1=\p_{x_i}\dot g_4=0
\quad\text{on }x_i=\pm1,\qquad i=2,3.
\end{equation*}
Together with \eqref{exit_comp}, these conditions allow the data
in \eqref{lin_phi_down} to be extended evenly across the lateral
walls.

The Neumann estimate on the extended
slab then gives
\begin{equation*}
\|\dot{\bm u}_+\|_{2,\al;\dN_+}
\leq C\si.
\end{equation*}
This also gives the required regularity at the edges. We have proved
the sufficiency and completed the proof.
\end{proof}

The expression for $\bar K$, namely \eqref{def_barK_0} or \eqref{def_barK}, is far too complicated to determine its sign, and the sign of $\bar K$ is vital for our results, as shown in \eqref{pos_F}. Surprisingly, $\bar K$ has a quite simple form, and it is always positive. This will be carried out in the following lemma.

\begin{lemma}\label{lem_K_positive}
The coefficient $\bar K$ appearing in \eqref{comp_condi} satisfies
\begin{equation}\label{exp:K_positive}
\bar K
=
\brho_+
+\frac{\ga-1}{\ga+1}\brho_-
>0.
\end{equation}
\end{lemma}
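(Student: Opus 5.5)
We will prove the identity by direct algebraic simplification of \eqref{def_barK_0}, using only the background Rankine--Hugoniot relations \eqref{bk_rh1}--\eqref{bk_rh3} for a polytropic gas. Positivity is then immediate: $\brho_\pm>0$ and $\ga>1$.

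\emph{Step 1: normalize the background.} Write $m_0=\brho\bu$ and $m_1=\brho\bu^2+\bp$. The relations $\bM_\pm^2=\bu_\pm^2/\bc_\pm^2=\brho_\pm\bu_\pm^2/(\ga\bp_\pm)$ give
\[
\frac{\bM_-^2}{\bu_-^2}=\frac{\brho_-}{\ga\bp_-},\qquad
1-\bM_+^2=\frac{\ga\bp_+-\brho_+\bu_+^2}{\ga\bp_+}.
\]
Replace $b_0$ using $1/\bc_\pm^2=\brho_\pm/(\ga\bp_\pm)$, so that $b_0=m_0\bigl(\brho_-/(\ga\bp_-)-\brho_+/(\ga\bp_+)\bigr)$. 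Expand $b_1$ as $\brho_-+\brho_-^2\bu_-^2/(\ga\bp_-)-\brho_+-\brho_+^2\bu_+^2/(\ga\bp_+)$.

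\emph{Step 2: substitute $b_\Phi^u$ and reduce.} From \eqref{def_RH_coefs},
\[
-m_0b_\Phi^u=\frac{m_0\bigl(m_1b_0-m_0b_1\bigr)}{(1-\bM_+^2)\bp_+\brho_+}.
\]
Collect every term of $\bar K$ over the common denominator $(1-\bM_+^2)$. The terms carrying the $\brho_+$ contributions should combine to $\brho_+(1-\bM_+^2)$ in the numerator, using $m_1-\bu_+m_0=\bp_+$. The remaining $\brho_-$ contributions should reduce to an expression in $\brho_-$, $\bp_\pm$, $\bu_\pm$ of the form $\brho_-\,\Theta$, where $\Theta$ is a function of the shock strength alone.

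\emph{Step 3: identify $\Theta$.} The claim to check is that $\Theta=(\ga-1)/(\ga+1)$. To do so, parametrize the normal shock by the Prandtl relation $\bu_-\bu_+=c_*^2$, equivalently by
\[
\frac{\bu_+}{\bu_-}=\frac{(\ga-1)\bM_-^2+2}{(\ga+1)\bM_-^2},\qquad
\frac{\bp_+}{\bp_-}=\frac{2\ga\bM_-^2-(\ga-1)}{\ga+1},
\]
together with the Bernoulli equality \eqref{bk_rh3}. With these substitutions, $\bar K$ becomes a rational function of $\bM_-^2$ times $\brho_-$ plus $\brho_+$. The verification then reduces to showing that this rational function is the constant $(\ga-1)/(\ga+1)$.

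\emph{Main obstacle.} The main difficulty is the bookkeeping in Step 3. Many terms, such as $[\bp]\bM_-^2/\bu_-^2$ and $[\bp]\bM_+^2/(\bu_+\bu_-(1-\bM_+^2))$, must cancel against parts of $b_0$, $b_1$ and $b_\Phi^u$. The $\bM_-$-dependence disappearing is a nontrivial identity.

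To keep the computation manageable, we will work with dimensionless ratios: divide by $\brho_-$ and set $r=\brho_-/\brho_+=\bu_+/\bu_-$ and $\pi=\bp_+/\bp_-$, expressing everything in $\bM_-^2$. We first check the identity in the limit $\bM_-\to1$ (weak shock), where it should reduce to $\brho_+ + \frac{\ga-1}{\ga+1}\brho_-$. We will also test a numerical case, e.g.\ $\ga=1.4$ and $\bM_-^2=2$, before doing the full symbolic simplification.
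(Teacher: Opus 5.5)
The idea is the same as the paper's: reduce \eqref{def_barK_0} to one shock-strength parameter using the normal-shock relations, then simplify. Your $\bM_-^2$ and the paper's $\la=\bu_+/\bu_-=\brho_-/\brho_+$ are interchangeable, since $2/\bM_-^2=(\ga+1)\la-(\ga-1)$. The gap is that the proposal stops where the proof has to begin. The lemma \emph{is} the algebraic identity, and Step~3 only restates it (``the claim to check is $\Theta=(\ga-1)/(\ga+1)$''). The weak-shock limit and the single value $\bM_-^2=2$ are consistency checks and cannot establish an identity in $\bM_-^2$. You would need either the symbolic reduction, or a degree bound on the cleared-denominator polynomial together with that many sample points. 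Step~2 is likewise a prediction of the answer's shape, not a computation. Since $\brho_+=\brho_-/\la$ depends on the shock strength, ``the $\brho_+$ contributions'' are undefined until you fix a grouping, and you never show that some grouping yields $\brho_+(1-\bM_+^2)$ and $\brho_-\Theta$.

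The computation closes more easily if you postpone the parametrization. Keep the upstream and downstream halves of $b_0=\brho\bu/\bc_-^2-\brho\bu/\bc_+^2$ and $b_1=\brho_-(1+\bM_-^2)-\brho_+(1+\bM_+^2)$ separate. Use only $[\bp]=\brho\bu(\bu_--\bu_+)$ and $m_1=\brho_\pm\bu_\pm^2+\bp_\pm$, with $m_0=\brho\bu$; this gives
\[
\bu_+b_0-b_1=[\brho]-\frac{[\bp]}{\bc_-^2},\qquad
\frac{[\bp]}{\bu_+\bu_-}=[\brho],\qquad
m_1b_0-m_0b_1=\frac{\ga-1}{\ga}\,m_0[\brho],\qquad
\frac{m_0^2}{\bp_+\brho_+}=\ga\bM_+^2 .
\]
Since $\bM_-^2/\bu_-^2=1/\bc_-^2$, the two $[\bp]/\bc_-^2$ terms cancel. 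Also $-m_0b_\Phi^u=(\ga-1)\bM_+^2[\brho]/(1-\bM_+^2)$, and therefore
\[
\bar K=[\brho]\,\frac{1+(\ga-1)\bM_+^2}{1-\bM_+^2}.
\]
This is already positive: $[\bp]>0$ forces $\bu_+<\bu_-$, hence $[\brho]>0$, and $\bM_+<1$. The Bernoulli relation is needed only for the closed form. Substituting $\bM_+^2=2\la/\bigl((\ga+1)-(\ga-1)\la\bigr)$ turns the fraction into $\frac{(\ga+1)+(\ga-1)\la}{(\ga+1)(1-\la)}$, and multiplying by $[\brho]=\brho_+(1-\la)$ gives $\brho_++\frac{\ga-1}{\ga+1}\brho_-$. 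The paper instead substitutes the full $\la$-parametrization ($\bc_-^2=X\bu_-^2/2$, $\bc_+^2=Y\la\bu_-^2/2$) from the start and evaluates three groups separately. That route also works, but it uses Bernoulli throughout, whereas the grouping above shows that positivity follows from mass and momentum alone.
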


\begin{proof}
For this calculation, denote the background fluxes by
\begin{equation*}
m_0:=\brho_\pm\bu_\pm,
\qquad
m_1:=\brho_\pm\bu_\pm^2+\bp_\pm.
\end{equation*}
The background shock relations give
\begin{equation}\label{K_ratio}
\la
:=
\frac{\bu_+}{\bu_-}
=
\frac{\brho_-}{\brho_+}\in(0,1).
\end{equation}
The momentum Rankine-Hugoniot condition and the Bernoulli condition
for the background shock are
\begin{equation}\label{K_bg_rel}
\brho_-\bu_-^2+\bp_-
=
\brho_+\bu_+^2+\bp_+,
\qquad
\frac{\bc_-^2}{\ga-1}+\frac{\bu_-^2}{2}
=
\frac{\bc_+^2}{\ga-1}+\frac{\bu_+^2}{2}.
\end{equation}
Using $\bp_\pm=\brho_\pm\bc_\pm^2/\ga$ and
$\bu_+=\la\bu_-$, these two relations become
\begin{equation*}
\frac{\bc_+^2}{\la}-\bc_-^2
=
\ga\bu_-^2(1-\la),
\qquad
\bc_+^2-\bc_-^2
=
\frac{\ga-1}{2}\bu_-^2(1-\la^2).
\end{equation*}
Solving for $\bc_-^2$ and $\bc_+^2$, we obtain
\begin{equation}\label{K_sound}
\bc_-^2=\frac{X}{2}\bu_-^2,\quad
\bc_+^2=\frac{Y}{2}\la \bu_-^2,
\end{equation}
where
\begin{equation}\label{K_XY}
X:=(\ga+1)\la-(\ga-1),
\qquad
Y:=(\ga+1)-(\ga-1)\la.
\end{equation}

Moreover,
\begin{equation}\label{K_bg_id}
\begin{split}
&\brho_+=\frac{\brho_-}{\la},
\qquad
[\bp]=\brho_-\bu_-^2(1-\la),\\
&\bc_-^2=\frac{\bu_-^2}{2}X,
\qquad
\bc_+^2=\frac{\bu_-^2\la}{2}Y,\\
&\bM_-^2=\frac{2}{X},
\qquad
\bM_+^2=\frac{2\la}{Y},\\
&\bp_-=\frac{\brho_-\bu_-^2}{2\ga}X,
\qquad
\bp_+=\frac{\brho_-\bu_-^2}{2\ga}Y.
\end{split}
\end{equation}

To simplify the three terms in \eqref{def_barK_0}, write
\begin{equation*}
\bar K=\bar K_1+\bar K_2+\bar K_3,
\end{equation*}
where
\begin{equation}\label{K_parts}
\begin{split}
\bar K_1
&:=
\lrb{
\frac{\bM_-^2}{\bu_-^2}
+\frac{1}{\bu_+\bu_-}
\frac{\bM_+^2}{1-\bM_+^2}
}[\bp],\\
\bar K_2
&:=
\bu_+b_0-b_1,\\
\bar K_3
&:=
-\frac{\brho\bu}{\bM_+^2-1}
\frac{m_1b_0-m_0b_1}{\bp_+\brho_+}.
\end{split}
\end{equation}

For $\bar K_1$, the identities
\begin{equation*}
1-\bM_+^2
=
\frac{(\ga+1)(1-\la)}{Y},
\qquad
X+(\ga+1)(1-\la)=2
\end{equation*}
give
\begin{equation}\label{K1_calc}
\begin{split}
\bar K_1
&=\left(
\frac{2}{X\bu_-^2}
+\frac{2}{\bu_-^2(Y-2\la)}
\right)\brho_-\bu_-^2(1-\la)\\
&=2\brho_-\left(
\frac{1-\la}{X}+\frac{1-\la}{Y-2\la}
\right)\\
&=2\brho_-\left(
\frac{1-\la}{X}+\frac{1}{\ga+1}
\right),
\end{split}
\end{equation}
where we have used $Y-2\la=(\ga+1)(1-\la)$.

Next, the definitions of $b_0$ and $b_1$ give
\begin{equation*}
\begin{split}
b_0
&=\frac{2\brho_-}{\bu_-}
\lrb{   \frac{1}{X}-\frac{1}{\la Y}},\\
b_1
&=
\brho_-\lrb{1+\frac{2}{X}-\frac{1}{\la} -\frac{2}{Y}}.
\end{split}
\end{equation*}
Therefore,
\begin{equation}\label{K2_calc}
\begin{split}
\bar K_2&=2\la \brho_-  \lrb{   \frac{1}{X}-\frac{1}{\la Y}}
- \brho_-\lrb{1+\frac{2}{X}-\frac{1}{\la} -\frac{2}{Y}}\\
&=\brho_- \lrb{
\frac{2\la}{X}-1-\frac{2}{X}+\frac{1}{\la} 
}\\
&=
\brho_-(1-\la)
\lrb{
\frac{1}{\la}-\frac{2}{X}
}\\
&=
-\frac{(\ga-1)\brho_-(1-\la)^2}{\la X}.
\end{split}
\end{equation}
Here the last equality uses
\begin{equation*}
X-2\la=-(\ga-1)(1-\la).
\end{equation*}

It remains to calculate $\bar K_3$. From the definitions of $m_0$
and $m_1$,
\begin{equation*}
\begin{split}
m_1&=\brho_\pm \bu_\pm^2+\bp_\pm=m_0 \bu_-+\bp_-=m_0 \bu_-+\brho_- \frac{\bc_-^2}{\ga}\\
&=m_0\bu_-\lrb{
1+\frac{X}{2\ga}}
=m_0 \bu_- \frac{(\la+1)(\ga+1)}{2 \ga}
\end{split}
\end{equation*}

Consequently,
\begin{equation*}
\begin{split}
m_1b_0-m_0b_1=&\frac{m_0^2}{\bu_-} \frac{(\la+1)(\ga+1)}{ \ga}
\lrb{   \frac{1}{X}-\frac{1}{\la Y}}\\
&-m_0 \brho_-\lrb{1+\frac{2}{X}-\frac{1}{\la} -\frac{2}{Y}}\\
=&\frac{m_0^2}{\bu_-} \lrb{
\frac{(\la-1)\ga+\la+1}{\ga X}-\frac{1+\ga+\la(1-\ga)}{\la \ga Y}-1+\frac{1}{\la}
}\\
=&\frac{m_0^2}{\bu_-}
\frac{\la-1-\la \ga+\ga}{\ga \la }=\frac{m_0^2}{\bu_-}
\frac{(\la-1)(1-\ga)}{\ga \la }
\end{split}
\end{equation*}
Moreover,
\begin{equation*}
\bp_+\brho_+
=
\frac{m_0^2}{\ga\bM_+^2}=\frac{1}{\la \ga} \frac{m_0^2}{2} Y.
\end{equation*}
It follows that
\begin{equation}\label{K3_calc}
\begin{split}
\bar K_3=& 2\frac{ m_0 Y}{(\ga+1)(1-\la)}\frac{m_0^2}{\bu_-}
\frac{(\la-1)(1-\ga)}{m_0^2 Y  }  \\
&=2\brho_-  \frac{\ga-1}{\ga+1}.
\end{split}
\end{equation}

Hence
\begin{equation*}
\begin{split}
\bar K&=\bar K_1+\bar K_2+\bar K_3\\
&= 2\brho_-\lrb{\frac{1-\la}{X}+\frac{1}{\ga+1}}
-\frac{(\ga-1)\brho_-(1-\la)^2}
{\la X}+2\brho_-  \frac{\ga-1}{\ga+1}.\\
\end{split}
\end{equation*}

Combining the terms with denominator $\la X$ and using the
definition of $X$ gives
\begin{equation}\label{K_sum}
\begin{split}
\frac{\bar K}{\brho_-}&=2\frac{1-\la}{X}+\frac{2}{\ga+1}-\frac{(\ga-1)(1-\la)^2}{\la X}+2\frac{\ga-1}{\ga+1}\\
&=\frac{(1-\la)(\la-\ga+\ga\la+1)}{\la X}+\frac{2\ga}{\ga+1}\\
&=\frac{1-\la}{\la}+\frac{2\ga}{\ga+1}\\
&=\frac{\brho_+}{\brho_-}+\frac{\ga-1}{\ga+1}.
\end{split}
\end{equation}

This is exactly \eqref{exp:K_positive}, and the proof is complete.
\end{proof}

The solvability condition \eqref{comp_condi}, with $\bar K$
given by \eqref{exp:K_positive}, is exactly the position equation
\eqref{lin_pos_eq}. Under \eqref{lin_pos_range} and
\eqref{lin_pos_mono}, it determines a unique $\bxi^*\in(0,L)$.

For this position $\bxi^*$,
Lemma~\ref{lem_lin_solva} yields a unique downstream solution
$\ml{\dU}_+$. The two tangential Rankine-Hugoniot conditions then
determine $\nabla_{x'}\dot\xi$ through
\eqref{def_lin_shock_shape}. Thus, the position
$\bxi^*$ and the shock slopes
$\nabla_{x'}\dot\xi$ are obtained successively and remain separate
components of the linear approximation.

The preceding results give the initial approximating solution
used in the nonlinear construction.

\begin{proposition}[The initial approximating solution and shock position]
\label{prop:lin}
Assume that \eqref{lin_pos_range} and
\eqref{lin_pos_mono} hold. Then there exists a unique
linear approximating solution
\begin{equation}\label{lin_solution}
\bigl(
\ml{\dU}_-,\ml{\dU}_+;
\bxi^*,\nabla_{x'}\dot\xi
\bigr)
\end{equation}
with the following properties:
\begin{enumerate}
\item
The initial approximating position $\bxi^*\in(0,L)$ is the unique
solution of \eqref{comp_condi}.

\item
The upstream perturbation $\ml{\dU}_-$ is the unique solution of
\eqref{lin_up_eq}-\eqref{lin_up_wall}. Moreover,
\begin{equation}\label{lin_up_const}
\dS_-=\dB_-=\dot\om_{1-}=0
\qquad\text{in }\ml N.
\end{equation}

\item
For the position $\bxi^*$ determined above, the downstream
perturbation $\ml{\dU}_+$ is the unique solution of
\eqref{lin_down_eq} subject to the linearized Rankine-Hugoniot
conditions, the slip boundary conditions and the prescribed exit
pressure. Moreover,
\begin{equation}\label{lin_down_const}
\dB_+=\dot\om_{1+}=0
\qquad\text{in }\dN_+.
\end{equation}

\item
The tangential derivatives of the linearized shock are determined by
\begin{equation}
\p_{x_i}\dot\xi
=\frac{\brho\bu}{[\bp]}
\bigl(\du_{i+}(\bxi^*,x')-\du_{i-}(\bxi^*,x')\bigr),
\qquad i=2,3.
\end{equation}
\end{enumerate}
Furthermore,
\begin{equation}\label{est_lin_approx}
\|\ml{\dU}_-\|_{2,\al;\ml N}
+
\|\ml{\dU}_+\|_{2,\al;\dN_+}
+
\|\nabla_{x'}\dot\xi\|_{2,\al;\ml C}
\leq C\si,
\end{equation}
where $C$ depends only on the background states, the nozzle and the
prescribed data.
\end{proposition}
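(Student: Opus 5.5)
The proposition collects Lemmas~\ref{lem_lin_up}, \ref{RH_bd_condi}, \ref{lem_lin_solva} and \ref{lem_K_positive}. The plan is to assemble them in order: first the position, then the upstream state, then the downstream state, then the shock slopes.

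\emph{Step 1: the position $\bxi^*$.} By Lemma~\ref{lem_K_positive}, the solvability condition \eqref{comp_condi} reads $F(\bxi^*)=\int_{\ml C}p_{ex}\,dx'$ with $F$ as in \eqref{pos_F}. Since $\Phi\in C^{3,\al}$, $F$ is $C^1$ on $[0,L]$ and
\[
F'(t)=\bar K\int_{\ml C}\p_{x_1}\Phi(t,x')\,dx'.
\]
By \eqref{lin_pos_mono} this never vanishes on $(0,L)$, so by continuity it has a fixed sign and $F$ is strictly monotone there. Together with \eqref{lin_pos_range} and the intermediate value theorem, this gives exactly one $\bxi^*\in(0,L)$, proving item (1).

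\emph{Step 2: the upstream state.} Lemma~\ref{lem_lin_up} gives existence, uniqueness, the invariants \eqref{lin_up_const} and the $C^{2,\al}$ bound on $\ml N$. Restricting to $\dN_-$ gives item (2).

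\emph{Step 3: the downstream state.} I argue that any downstream solution is forced to have the stated structure, and that it exists.
\begin{itemize}
\item Transport of $\dB_+$ with the jump condition \eqref{jump_dot_B} and $\dB_-=0$ gives $\dB_+\equiv0$.
\item Lemma~\ref{RH_bd_condi} fixes $\du_{1+}$ and $\dS_+$ on $x_1=\bxi^*$. Transport then gives $\dS_+=\dot g_4$ on all of $\dN_+$.
\item The exit condition \eqref{lin_exit_p} yields $\du_{1+}(L,\cdot)=\dot g_e$.
\item The tangential conditions \eqref{def_lin_shock_shape} can be integrated to a function $\dot\xi$ only if the compatibility condition \eqref{lin_shock_vort} holds. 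Since $\dot\om_{1-}=0$, this forces $\dot\om_{1+}(\bxi^*,\cdot)=0$, and transport gives $\dot\om_{1+}\equiv0$. Hence \eqref{lin_down_const} holds.
\end{itemize}
The velocity problem is then \eqref{lin_vel_eq}--\eqref{lin_vel_bc}. Because $\bxi^*$ satisfies \eqref{comp_condi}, Lemma~\ref{lem_lin_solva} gives existence, uniqueness and $\|\dot{\bm u}_+\|_{2,\al;\dN_+}\le C\si$. Uniqueness holds because the potential is determined up to a constant, which does not change the velocity. The entropy bound follows from $\|\dot g_4\|_{2,\al}\le C\si$, using the trace of $\du_{1-}$ and the regularity of $\Phi$. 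This proves item (3).

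\emph{Step 4: the shock slopes.} Define $\nabla_{x'}\dot\xi$ by \eqref{def_lin_shock_shape}. It is a genuine gradient because \eqref{lin_shock_vort} holds and $\ml C$ is simply connected. The bound $\|\nabla_{x'}\dot\xi\|_{2,\al;\ml C}\le C\si$ follows from the $C^{2,\al}$ traces of $\du_{i\pm}$. This gives item (4) and \eqref{est_lin_approx}.

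The only point needing care is Step 3. One must check that the bounds coming from Lemma~\ref{lem_lin_solva} depend on $\bxi^*$ only through the distances $\bxi^*$ and $L-\bxi^*$. This is harmless because $\bxi^*$ is now a fixed number determined by the data. The edge regularity was already handled in that lemma by even reflection across the walls.
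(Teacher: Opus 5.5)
Your proposal is correct and follows essentially the same route as the paper, which also obtains the proposition by assembling Lemmas~\ref{lem_lin_up}, \ref{RH_bd_condi}, \ref{lem_lin_solva} and \ref{lem_K_positive} in the same order: the position from the monotonicity of $F$, then the upstream state, then the downstream state with $\dot\om_{1+}\equiv0$ forced by the slope compatibility, and finally the shock slopes. The only thing left implicit is that the strict monotonicity of $F$ needs $\bar K\neq0$, which Lemma~\ref{lem_K_positive} supplies, in addition to \eqref{lin_pos_mono}.
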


The linear approximation determines a reference position and
the leading flow and shock slopes. These quantities specify
the neighborhood for the nonlinear construction.

\section{Reformulation of the nonlinear problem on the fixed domain}
\label{sec:nonlinear}
We turn to the nonlinear shock problem. The upstream solution is first constructed in the whole nozzle. The downstream equations and the shock conditions are then written
on the fixed domain associated with the position $\bxi^*$ found in
Section~2.

\subsection{The nonlinear supersonic flow}
\label{sub:up}

The upstream flow can be constructed independently of the unknown
shock. The initial boundary value problem in the $x_1$ direction
is solved first, and the solution is then compared with the linear
approximation from Section~2.

To write the upstream equations in symmetric form, use the
velocity, pressure and entropy as unknowns:
\begin{equation}
\label{up_W}
\bm W_-
:=
(u_{1-},u_{2-},u_{3-},p_-,S_-)^{\mathsf T},
\qquad
\rho_-=\lrb{\frac{p_-}{S_-}}^{\frac1\ga},
\qquad
c_-^2=\frac{\ga p_-}{\rho_-}.
\end{equation}
In these variables, the steady Euler equations are equivalent to
\begin{equation}
\label{up_euler}
\begin{cases}
\rho_-(\u_-\cdot\nabla)\u_-+\nabla p_-
=-\si\rho_-\nabla\Phi,\\
\displaystyle
\frac{1}{\rho_-c_-^2}\u_-\cdot\nabla p_-
+\div\u_-=0,\\
\u_-\cdot\nabla S_-=0.
\end{cases}
\end{equation}
Collecting the coefficients of each derivative gives the matrix form
\begin{equation}
\label{up_sys}
\sum_{i=1}^{3}A_i(\bm W_-)\p_{x_i}\bm W_-
=\bm F(\bm W_-;\Phi),
\end{equation}
with coefficient matrices and force vector
\begin{equation}
\label{up_matrices}
A_i(\bm W_-)
=
\begin{pmatrix}
\rho_-u_{i-}I_3&\bm e_i&\bm 0\\
\bm e_i^{\mathsf T}&\dfrac{u_{i-}}{\ga p_-}&0\\
\bm 0^{\mathsf T}&0&u_{i-}
\end{pmatrix},
\qquad
\bm F(\bm W_-;\Phi)
=
\begin{pmatrix}
-\si\rho_-\nabla\Phi\\0\\0
\end{pmatrix}.
\end{equation}
Here $\bm e_i$ is the $i$th standard basis vector of $\mathbb R^3$,
and $I_3$ is the $3\times3$ identity matrix. It follows from
\eqref{up_matrices} that all the matrices $A_i$
are symmetric and that $A_1$ is positive definite whenever
$p_->0$, $\rho_->0$ and $u_{1-}>c_->0$. Thus
\eqref{up_sys} is symmetric hyperbolic
in the $x_1$-direction near the background supersonic state
\begin{equation}
\label{up_W_bg}
\bar{\bm W}_-
:=
(\bu_-,0,0,\bp_-,\bS_-)^{\mathsf T}.
\end{equation}

The entrance and wall conditions are
\begin{equation}
\label{up_inlet}
\bm W_-(0,x')=\bar{\bm W}_-,
\qquad x'\in\ml C,
\end{equation}
and
\begin{equation}
\label{up_wall}
\begin{cases}
u_{2-}(x_1,\pm1,x_3)=0,
&(x_1,x_3)\in[0,L]\times[-1,1],\\
u_{3-}(x_1,x_2,\pm1)=0,
&(x_1,x_2)\in[0,L]\times[-1,1].
\end{cases}
\end{equation}
\begin{proposition}[The nonlinear supersonic flow]
\label{prop_up}
Let $\Phi$ satisfy the assumptions above. There exists a constant
$\si_0>0$, depending only on $\ml{\bU}_-$, $L$ and
$\|\Phi\|_{3,\al;\ml N}$, such that, for $0<\si\leq\si_0$, system
\eqref{up_sys} with conditions
\eqref{up_inlet} and
\eqref{up_wall} admits a unique solution
\begin{equation}\label{up_reg}
\bm W_-\in C^{2,\al}(\overline{\ml N})
\end{equation}
in a sufficiently small neighborhood of $\bar{\bm W}_-$.
The corresponding upstream state is
\begin{equation}
\label{up_state}
\ml U_-=(\u_-,S_-,B_-),
\qquad
B_-=
\frac12|\u_-|^2
+\frac{\ga}{\ga-1}\frac{p_-}{\rho_-}
+\si\Phi
\end{equation}
and its perturbation by
\begin{equation}\label{up_pert}
V_-:=\ml U_- -\ml{\bU}_-.
\end{equation}
The entropy and Bernoulli function remain constant, and the
velocity is irrotational:
\begin{equation}
\label{up_invar}
S_-\equiv\bS_-,
\qquad
B_-\equiv\bB,
\qquad
\curl\u_-\equiv0
\qquad\text{in }\ml N.
\end{equation}
Moreover,
\begin{equation}
\label{up_supersonic}
u_{1-}\geq\frac12\bu_-,
\qquad
u_{1-}^2-c_-^2
\geq\frac12\lrb{\bu_-^2-\bc_-^2}>0
\qquad\text{in }\ml N,
\end{equation}
and
\begin{equation}
\label{up_est}
\|\bm W_- -\bar{\bm W}_-\|_{2,\al;\ml N}
+\|V_-\|_{2,\al;\ml N}
\leq C\|\si\Phi\|_{3,\al;\ml N}
\leq C\si.
\end{equation}
For the linear upstream solution in Lemma~\ref{lem_lin_up}, we have
\begin{equation}
\label{up_quad_est}
\|V_- -\ml{\dU}_-\|_{1,\al;\ml N}
\leq C\si^2.
\end{equation}
In addition, the following boundary identities hold:
\begin{equation}
\label{up_wall_comp}
\begin{cases}
u_{2-}=\p_{x_2}^2u_{2-}=0,
\quad
\p_{x_2}(u_{1-},u_{3-},p_-,\rho_-)=0,
&\text{on }x_2=\pm1,\\
u_{3-}=\p_{x_3}^2u_{3-}=0,
\quad
\p_{x_3}(u_{1-},u_{2-},p_-,\rho_-)=0,
&\text{on }x_3=\pm1.
\end{cases}
\end{equation}
Here and below, $C$ is independent of $\si$.
\end{proposition}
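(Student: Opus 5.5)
We will treat \eqref{up_sys} as a symmetric hyperbolic system with $x_1$ as the time-like variable. The starting point is that $A_1(\bar{\bm W}_-)$ is positive definite because $\bu_->\bc_-$. The plan has three stages: first remove the lateral walls by reflection, then run an energy-method iteration, and finally read off the invariants and the comparison with Lemma~\ref{lem_lin_up}.

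\textbf{Removing the walls.} On $x_2=\pm1$ we extend $u_{2-}$ oddly, and $u_{1-},u_{3-},p_-,S_-$ evenly; on $x_3=\pm1$ we do the same with the roles of $u_{2-}$ and $u_{3-}$ exchanged. The potential $\Phi$ is extended evenly. Conditions \eqref{phi_wall} and \eqref{phi_wall3} make the extended $\Phi$ belong to $C^{3,\al}$. The system \eqref{up_sys} is invariant under these reflections: with the sign changes, $A_2$ and $A_3$ transform consistently, and the force $-\si\rho_-\nabla\Phi$ has odd normal component. We then extend $4$-periodically in $x_2$ and $x_3$. This gives a Cauchy problem in $x_1\in[0,L]$ on the torus $\mathbb T^2$, with constant initial data $\bar{\bm W}_-$.

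\textbf{Solving the Cauchy problem.} Standard local existence for symmetric hyperbolic systems in $H^s$ with $s\ge 5$ (Kato/Friedrichs energy estimates) applies. It gives existence on the whole interval $[0,L]$ provided $\si\le\si_0$. The reason is that the source is $O(\si)$ and the initial data equal the constant state exactly, so a continuity argument keeps $\|\bm W_--\bar{\bm W}_-\|_{H^s}\le C\si$ on $[0,L]$. Differentiating the equation in $x_1$ and using Sobolev embedding then yields \eqref{up_est} in $C^{2,\al}$. Uniqueness follows from the $L^2$ energy estimate for the difference of two solutions. Uniqueness also shows that the solution inherits the reflection symmetries. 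Restricting back to $\ml N$ therefore gives \eqref{up_wall} together with the parity identities \eqref{up_wall_comp}; for instance, $\p_{x_2}^2u_{2-}=0$ on the wall because an odd function has vanishing even-order derivatives there. The bounds \eqref{up_supersonic} follow from smallness.

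\textbf{Invariants and comparison with the linear solution.} Since $\u_-\cdot\nabla S_-=0$ with constant inflow data and $u_{1-}>0$, we get $S_-\equiv\bS_-$. Combining the momentum and energy equations gives $\u_-\cdot\nabla B_-=0$, hence $B_-\equiv\bB$. For the vorticity, Crocco's relation applied to the momentum equation gives $\u_-\times\bm\om=\nabla B_- -T\nabla S_-$, which vanishes here. Taking the curl yields the linear transport system $(\u_-\cdot\nabla)\bm\om=(\bm\om\cdot\nabla)\u_- -\bm\om\,\div\u_-$. At the entrance, $\bm\om=0$: tangential derivatives of $\u_-$ vanish there, and $\om_2=\om_3=0$ by \eqref{eq_om23}. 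A Gronwall argument along characteristics then gives $\bm\om\equiv0$.

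For \eqref{up_quad_est}, we subtract \eqref{lin_up_eq} from the nonlinear system written in the variables $\ml U$. The difference $V_--\ml{\dU}_-$ solves the linearized hyperbolic system with sources that are quadratic, of size $O(\si^2)$ in $C^{1,\al}$, and with zero data. Applying the same energy estimate one derivative lower gives the $C\si^2$ bound.

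The main obstacle is making the reflection-plus-regularity argument rigorous up to the edges: we must check that the extended source lies in $C^{2,\al}$, which requires the third-order conditions \eqref{phi_wall3}, and that the energy method yields H\"older rather than only Sobolev regularity. For the latter we will work in high $H^s$ and embed, relying on $\Phi\in C^{3,\al}$. If necessary, we will instead differentiate along characteristics and bound the $C^{2,\al}$ norm directly.
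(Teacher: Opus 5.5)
Your overall architecture matches the paper's. That means reflection and periodization, a Cauchy problem in $x_1$ with continuation, characteristic arguments for $S_-$ and $B_-$, vanishing vorticity from $\u_-\times\bm\om_-=0$, and a comparison through the difference of nonlinear and linear problems. One small variation: your vector vorticity transport with Gronwall replaces the paper's scalar transport equation for $\om_{1-}$. The paper obtains that equation by inserting $\om_{2-}=\frac{u_{2-}}{u_{1-}}\om_{1-}$ and $\om_{3-}=\frac{u_{3-}}{u_{1-}}\om_{1-}$ into $\div\bm\om_-=0$; both arguments work.

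The gap is in the regularity step that is supposed to produce \eqref{up_est}. Your route through $H^s$ with $s\ge5$ needs the source $-\si\rho_-\nabla\Phi$ in $H^5$, roughly $\Phi\in H^6$, but only $\Phi\in C^{3,\al}$ is assumed. After periodization, what is actually available is $\nabla\Phi\in C^{2,\al}\subset H^{2+\beta}$ for $\beta<\al$. The energy method then controls $\bm W_-$ only in $C([0,L];H^{2+\beta})$, which embeds into $C^{1,\beta}$ in the transverse variables. That is a full derivative short of \eqref{up_est}. Mollifying $\Phi$ does not help unless you already have a uniform H\"older bound.

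Your fallback, differentiating along characteristics, controls the transport modes ($S$, $B$, vorticity). It does not control the acoustic modes, which propagate along characteristic cones in the two transverse variables, so no ODE argument handles them. What is missing is an estimate for the symmetric hyperbolic problem directly in the H\"older class, giving $C^{2,\al}$ control by $\|\si\Phi\|_{3,\al}$. The paper takes exactly this from the classical theory applied to the periodic problem, as in \cite[Lemma~1.1]{WengXin2026a}, not from Sobolev embedding. As written, your proposal does not yield \eqref{up_est} under the stated hypotheses.

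The same problem affects \eqref{up_quad_est}. An energy estimate ``one derivative lower'' with a $C^{1,\al}$ quadratic source gives a Sobolev bound, not a $C^{1,\al}$ bound.

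There is also a structural point. The system \eqref{lin_up_eq} is an overdetermined div--curl system, not of the form \eqref{up_sys}. Subtracting it from the nonlinear system ``in the variables $\ml U$'' therefore does not directly give a symmetric hyperbolic problem for $V_--\ml{\dU}_-$. The paper handles this as follows:
\begin{itemize}
\item It defines $\dot{\bm W}_-$ using the linearized pressure \eqref{lin_up_p}.
\item It checks that $\sum_iA_i(\bar{\bm W}_-)\p_{x_i}\dot{\bm W}_-=\bm F(\bar{\bm W}_-;\Phi)$.
\item Then $\bm Z_-=\bm W_--\bar{\bm W}_--\dot{\bm W}_-$ solves the constant-coefficient symmetric system with zero data and a quadratic source.
\item \eqref{up_quad_est} follows because $V_-$ and $\ml{\dU}_-$ both have vanishing $S$ and $B$ components.
\end{itemize}
Alternatively, since both velocity fields are curl free, you could pass to a potential for the difference. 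Either way, the H\"older-class estimate noted above is still required.
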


\begin{proof}
Extend $\Phi$ evenly across $x_2=\pm1$ and $x_3=\pm1$, and then
extend it $4$-periodically in both tangential variables. Conditions
\eqref{phi_wall} and
\eqref{phi_wall3} ensure that the resulting
extension belongs to $C^{3,\al}$. Correspondingly, across an
$x_2$-wall, we extend $p_-,u_{1-},u_{3-},S_-$ evenly and $u_{2-}$
oddly; across an $x_3$-wall, we extend
$p_-,u_{1-},u_{2-},S_-$ evenly and $u_{3-}$ oddly. The system and
the constant entrance data are invariant under these two reflections.

Apply the classical solution theory for quasilinear symmetric
hyperbolic systems to the periodic problem, as in
\cite[Lemma~1.1]{WengXin2026a}; see also
\cite{BenzoniGavageSerre2007}.
A continuation argument on the fixed interval
$[0,L]$ gives a unique solution satisfying
\begin{equation}
\label{up_W_est}
\|\bm W_- -\bar{\bm W}_-\|_{2,\al;\ml N}
\leq C\|\si\Phi\|_{3,\al;\ml N}
\leq C\si,
\end{equation}
provided that $\si_0$ is sufficiently small. This estimate gives
\eqref{up_supersonic}. The reflection symmetry yields
\eqref{up_wall} and \eqref{up_wall_comp}.

The entropy equation and its entrance condition are
\begin{equation}
\label{up_S_eq}
\begin{cases}
\u_-\cdot\nabla S_-=0,
&\text{in }\ml N,\\
S_-(0,x')=\bS_-,
&x'\in\ml C.
\end{cases}
\end{equation}
Since $u_{1-}>0$ and the velocity is tangent to the nozzle wall, every
streamline can be traced back to the entrance. Hence
$S_-\equiv\bS_-$. Taking the scalar product of the momentum
equation with $\u_-$ and using the entropy equation gives the
Bernoulli transport problem
\begin{equation}
\label{up_B_eq}
\begin{cases}
\u_-\cdot\nabla B_-=0,
&\text{in }\ml N,\\
B_-(0,x')=\bB,
&x'\in\ml C,
\end{cases}
\end{equation}
where the entrance value follows from
\eqref{phi_entrance}. The same characteristic argument gives
$B_-\equiv\bB$.

Let $\bm\om_-:=\curl\u_-$. Using the definitions of $B_-$ and $S_-$,
we rewrite the momentum equation as
\begin{equation}
\label{up_momentum}
\u_-\times\bm\om_-
=
\nabla B_-
-\frac{\rho_-^{\ga-1}}{\ga-1}\nabla S_-.
\end{equation}
It follows that $\u_-\times\bm\om_-=0$. Since $u_{1-}>0$, we have
\begin{equation}
\label{up_om23}
\om_{2-}=\frac{u_{2-}}{u_{1-}}\om_{1-},
\qquad
\om_{3-}=\frac{u_{3-}}{u_{1-}}\om_{1-}.
\end{equation}
Substituting \eqref{up_om23} into
$\div\bm\om_-=0$ yields
\begin{equation}
\label{up_om1_eq}
\begin{cases}
\displaystyle
\lrb{
\p_{x_1}
+\frac{u_{2-}}{u_{1-}}\p_{x_2}
+\frac{u_{3-}}{u_{1-}}\p_{x_3}
}\om_{1-}
+\lrb{
\p_{x_2}\lrb{\frac{u_{2-}}{u_{1-}} }
+\p_{x_3}\lrb{\frac{u_{3-}}{u_{1-}} }
}\om_{1-}=0,
&\text{in }\ml N,\\
\om_{1-}(0,x')=0,
&x'\in\ml C.
\end{cases}
\end{equation}
The entrance condition follows from \eqref{up_inlet}.
Equation \eqref{up_om1_eq} gives $\om_{1-}\equiv0$, and
\eqref{up_om23} then yields $\bm\om_-\equiv0$.
This proves \eqref{up_invar}.

For the initial approximating solution constructed in Lemma~\ref{lem_lin_up}, define
\begin{equation}
\label{lin_up_W}
\dot{\bm W}_-
:=
(\du_{1-},\du_{2-},\du_{3-},\dot p_-,\dS_-)^{\mathsf T},
\end{equation}
where \eqref{Exp_pressure} gives the linearized pressure
\begin{equation}\label{lin_up_p}
\dot p_-
=-\brho_-\bu_-\du_{1-}
-\frac{\bp_-}{(\ga-1)\bS_-}\dS_-
+\brho_-\dB_-
-\si\brho_-\Phi.
\end{equation}
Using \eqref{lin_up_eq} and \eqref{lin_up_invar}, a direct
computation shows that $\dot{\bm W}_-$ satisfies
\begin{equation}
\label{lin_up_sys}
\sum_{i=1}^{3}
A_i(\bar{\bm W}_-)\p_{x_i}\dot{\bm W}_-
=
\bm F(\bar{\bm W}_-;\Phi),
\qquad
\dot{\bm W}_-(0,x')=0,
\end{equation}
with the homogeneous wall conditions \eqref{lin_up_wall}.
In particular,
\begin{equation}
\label{lin_up_W_invar}
\dS_-=\dB_-=0,
\qquad
\dot{\u}_-=(\du_{1-},\du_{2-},\du_{3-}).
\end{equation}

The error after subtracting the linear approximation is
\begin{equation}
\label{up_Z}
\bm Z_- :=
\bm W_- -\bar{\bm W}_- -\dot{\bm W}_-.
\end{equation}
Subtracting \eqref{lin_up_sys} from \eqref{up_sys} gives
\begin{equation}\label{up_Z_sys}
\begin{split}
\sum_{i=1}^{3}A_i(\bar{\bm W}_-)\p_{x_i}\bm Z_-=&
-\sum_{i=1}^{3}
\bigl(A_i(\bm W_-)-A_i(\bar{\bm W}_-)\bigr)
\p_{x_i}(\bm W_- -\bar{\bm W}_-)\\
&+\bm F(\bm W_-;\Phi)-\bm F(\bar{\bm W}_-;\Phi).
\end{split}
\end{equation}
For $i=2,3$, let $Z_{i-}$ denote the $i$th component of $\bm Z_-$.
The boundary conditions are
\begin{equation}
\label{up_Z_eq}
\begin{cases}
\bm Z_-(0,x')=0,
&x'\in\ml C,\\
Z_{2-}(x_1,\pm1,x_3)=0,
&(x_1,x_3)\in[0,L]\times[-1,1],\\
Z_{3-}(x_1,x_2,\pm1)=0,
&(x_1,x_2)\in[0,L]\times[-1,1].
\end{cases}
\end{equation}
The linear estimate and \eqref{up_W_est} give
\begin{equation}
\label{up_Z_est}
\|\bm Z_-\|_{1,\al;\ml N}
\leq C\lrb{
\|\bm W_- -\bar{\bm W}_-\|_{2,\al;\ml N}^2
+\si\|\Phi\|_{2,\al;\ml N}
\|\bm W_- -\bar{\bm W}_-\|_{1,\al;\ml N}
}
\leq C\si^2.
\end{equation}
Finally, \eqref{up_invar} and
\eqref{lin_up_W_invar} imply
\begin{equation*}
V_-
=
(u_{1-}-\bu_-,u_{2-},u_{3-},0,0),
\qquad
\ml{\dU}_-
=
(\du_{1-},\du_{2-},\du_{3-},0,0).
\end{equation*}
Thus \eqref{up_Z_est} yields
\eqref{up_quad_est}, which completes the proof.
\end{proof}

\subsection{The change of variables and reformulation in the fixed domain}
\label{sub:fixed}

Let $\ml U_-$ be the solution obtained in
Proposition~\ref{prop_up}. The downstream equations and the
shock conditions are now written on the fixed domain $\dN_+$.

\subsubsection{The change of coordinates}

The change of coordinates sends the shock to $y_1=\bxi^*$ and
keeps the exit and the lateral walls fixed.

Let the shock be $x_1=\xi(x')$, where $x'=(x_2,x_3)\in\ml C$, and
suppose that $\xi$ is sufficiently close to $\bxi^*$. The downstream
domain $\ml N_+$ in \eqref{def_N+} is transformed by
\begin{equation}\label{shock_flatten}
y_1=\bxi^*+\frac{L-\bxi^*}{L-\xi(x')}(x_1-\xi(x')),
\qquad y'=x'.
\end{equation}
The inverse transformation is
\begin{equation}\label{shock_inverse}
x_1=X_1(\y;\xi)
:=y_1+\frac{L-y_1}{L-\bxi^*}\bigl(\xi(y')-\bxi^*\bigr),
\qquad x'=y'.
\end{equation}
Write $X_\xi(\y)=(X_1(\y;\xi),y')$. The transformed domain is
$\dN_+=(\bxi^*,L)\times\ml C$, with boundary faces
\begin{equation}\label{fix_faces}
\begin{aligned}
\Ga_s^*&=\{\bxi^*\}\times\ml C,
&\Ga_e&=\{L\}\times\ml C,\\
\Ga_2^\pm&=(\bxi^*,L)\times\{\pm1\}\times[-1,1],
&\Ga_3^\pm&=(\bxi^*,L)\times[-1,1]\times\{\pm1\}.
\end{aligned}
\end{equation}
The downstream state in the new coordinates is denoted by
\begin{equation*}
\ml U(\y):=\ml U_+(X_\xi(\y)).
\end{equation*}
By the chain rule,
\begin{equation}\label{def_Dxi}
\begin{aligned}
\p_{x_1}=\ml D_1^\xi
&:=\frac{L-\bxi^*}{L-\xi(y')}\p_{y_1},\\
\p_{x_i}=\ml D_i^\xi
&:=\p_{y_i}
-\frac{L-y_1}{L-\xi(y')}\p_{y_i}\xi(y')\p_{y_1},
\qquad i=2,3.
\end{aligned}
\end{equation}
The potential is evaluated at $\Phi(X_1(\y;\xi),y')$. On the
shock, downstream functions are evaluated at $(\bxi^*,y')$, and
the upstream solution is evaluated at $(\xi(y'),y')$.

\subsubsection{The Rankine-Hugoniot conditions}

The jump conditions determine boundary values for the downstream
state and two equations for the shock derivatives.

With the functions $G_i$ and $\ml S_i$ defined in \eqref{def_mlS_23} and \eqref{def_G01_0}, the shock
conditions become
\begin{equation}\label{fix_RH}
\begin{cases}
G_i\bigl(\ml U(\bxi^*,y'),\ml U_-(\xi(y'),y');
\Phi(\xi(y'),y')\bigr)=0,
&i=0,1,\\
B(\bxi^*,y')=B_-(\xi(y'),y'),\\
\p_{y_i}\xi(y')
=\ml S_i\bigl(\ml U(\bxi^*,y'),\ml U_-(\xi(y'),y');
\Phi(\xi(y'),y')\bigr),
&i=2,3.
\end{cases}
\end{equation}
At the background states and zero force, the denominator in the
formulas for $\ml S_2,\ml S_3$ is
\begin{equation}\label{fix_slope_det}
J_{22}J_{33}-J_{23}J_{32}=[\bp]^2>0.
\end{equation}
Thus the two tangential R-H conditions determine
$\p_{y_2}\xi$ and $\p_{y_3}\xi$ for states close to the background.
The derivative of the remaining three conditions with respect to
the downstream variables $(u_1,S,B)$ has determinant
\begin{equation}\label{fix_RH_det}
\left.
\det\frac{\p(G_0,G_1,[B])}{\p(u_1,S,B)}
\right|_{\substack{\ml U_+=\ml{\bU}_+\\\si\Phi=0}}
=-\frac{\brho_+\bp_+}{(\ga-1)\bS_+}(1-\bM_+^2)\neq0.
\end{equation}
Since $B_-=\bB$, we have
\begin{equation*}
B(\bxi^*,y')=\bB.
\end{equation*}
The implicit function theorem applied to $G_0=G_1=0$ with
$B=\bB$ gives two smooth functions $g_1$ and $g_4$ such that
\begin{equation}\label{fix_shock_data}
\begin{aligned}
u_1(\bxi^*,y')-\bu_+
&=g_1\bigl(u_2(\bxi^*,y'),u_3(\bxi^*,y'),
\ml U_-(\xi(y'),y');\Phi(\xi(y'),y')\bigr),\\
S(\bxi^*,y')-\bS_+
&=g_4\bigl(u_2(\bxi^*,y'),u_3(\bxi^*,y'),
\ml U_-(\xi(y'),y');\Phi(\xi(y'),y')\bigr).
\end{aligned}
\end{equation}
Using \eqref{up_invar}, the linear terms given in
Lemma~\ref{RH_bd_condi} yield
\begin{equation}\label{fix_g1_exp}
\begin{aligned}
g_1
={}&b_u^s\bigl(u_{1-}(\xi(y'),y')-\bu_-\bigr)
+\si b_\Phi^u\Phi(\xi(y'),y')\\
&+R_1\bigl(u_2(\bxi^*,y'),u_3(\bxi^*,y'),
\ml U_-(\xi(y'),y'),\si\Phi(\xi(y'),y')\bigr),
\end{aligned}
\end{equation}
and
\begin{equation}\label{fix_g4_exp}
\begin{aligned}
g_4
={}&b_s^s\bigl(u_{1-}(\xi(y'),y')-\bu_-\bigr)
+\si b_\Phi^s\Phi(\xi(y'),y')\\
&+R_4\bigl(u_2(\bxi^*,y'),u_3(\bxi^*,y'),
\ml U_-(\xi(y'),y'),\si\Phi(\xi(y'),y')\bigr).
\end{aligned}
\end{equation}
Here $R_1$ and $R_4$ are remainders of second order.

Substituting \eqref{fix_shock_data} and $B=\bB$ into the two
shock equations in \eqref{fix_RH} gives
\begin{equation}\label{fix_slope_exp}
\begin{aligned}
\p_{y_i}\xi(y')
={}&a_0\bigl(u_i(\bxi^*,y')-u_{i-}(\xi(y'),y')\bigr)\\
&+R_i\bigl(u_2(\bxi^*,y'),u_3(\bxi^*,y'),
\ml U_-(\xi(y'),y'),\si\Phi(\xi(y'),y')\bigr),
\quad i=2,3,
\end{aligned}
\end{equation}
where $a_0$ is given by \eqref{def_a0}, and $R_2,R_3$ are
remainders of second order.

On the walls, the slip conditions and the formulas for
$\ml S_2,\ml S_3$ give
\begin{equation}\label{shock_wall}
\p_{y_2}\xi(\pm1,y_3)=0,
\qquad \p_{y_3}\xi(y_2,\pm1)=0.
\end{equation}

\subsubsection{The Bernoulli function and the entropy}

The transformed Bernoulli transport equation is
\begin{equation}\label{fix_B_eq}
\sum_{i=1}^3u_i\ml D_i^\xi B=0
\qquad\text{in }\dN_+,
\end{equation}
with $B(\bxi^*,y')=\bB$ from the Bernoulli jump condition.
For a flow and shock close to the background, the coefficient of
$\p_{y_1}$ is positive. The slip conditions keep the characteristics
inside the nozzle, so every backward characteristic meets
$\Ga_s^*$. Consequently,
\begin{equation}\label{fix_B}
B\equiv\bB\qquad\text{in }\dN_+.
\end{equation}
The entropy satisfies
\begin{equation}\label{fix_S_eq}
\sum_{i=1}^3u_i\ml D_i^\xi S=0
\qquad\text{in }\dN_+,
\end{equation}
with the boundary condition obtained in
\eqref{fix_shock_data}:
\begin{equation}\label{fix_S_bc}
S(\bxi^*,y')-\bS_+
=g_4\bigl(u_2(\bxi^*,y'),u_3(\bxi^*,y'),
\ml U_-(\xi(y'),y');\Phi(\xi(y'),y')\bigr).
\end{equation}

\subsubsection{The vorticity field}

In the new coordinates, equation~\eqref{eq_om1} for the first
vorticity component becomes
\begin{equation}\label{fix_om1_eq}
\begin{aligned}
&\left(\ml D_1^\xi+\frac{u_2}{u_1}\ml D_2^\xi
+\frac{u_3}{u_1}\ml D_3^\xi\right)\omega_1
+\omega_1\sum_{i=2}^3\ml D_i^\xi\left(\frac{u_i}{u_1}\right)\\
=&
-\frac{\ml D_2^\xi S}{\ga-1}
\ml D_3^\xi\left(\frac{\rho^{\ga-1}}{u_1}\right)
+\frac{\ml D_3^\xi S}{\ga-1}
\ml D_2^\xi\left(\frac{\rho^{\ga-1}}{u_1}\right).
\end{aligned}
\end{equation}

The two equations for $\xi$ in \eqref{fix_RH}
describe the same shock surface. Equating their mixed derivatives
gives
\begin{equation}\label{shape_compat}
\begin{aligned}
&\p_{y_2}\ml S_3\bigl(\ml U(\bxi^*,y'),
\ml U_-(\xi(y'),y');\Phi(\xi(y'),y')\bigr)\\
=&
\p_{y_3}\ml S_2\bigl(\ml U(\bxi^*,y'),
\ml U_-(\xi(y'),y');\Phi(\xi(y'),y')\bigr).
\end{aligned}
\end{equation}
Utilizing \eqref{fix_slope_exp},
condition \eqref{shape_compat} becomes
\begin{equation}\label{fix_curl_bc}
\begin{aligned}
&(\p_{y_2}u_3-\p_{y_3}u_2)(\bxi^*,y')\\
={}&\p_{y_2}\bigl(u_{3-}(\xi(y'),y')\bigr)
-\p_{y_3}\bigl(u_{2-}(\xi(y'),y')\bigr)\\
&-\frac1{a_0}\p_{y_2}R_3(y')
+\frac1{a_0}\p_{y_3}R_2(y').
\end{aligned}
\end{equation}
By the definition of $\omega_1$ and \eqref{def_Dxi}, we obtain
\begin{equation}\label{trace_curl}
\begin{aligned}
&(\p_{y_2}u_3-\p_{y_3}u_2)(\bxi^*,y')\\
=&\omega_1(\bxi^*,y')
+\frac{L-\bxi^*}{L-\xi(y')}
\Bigl(\p_{y_2}\xi(y')\p_{y_1}u_3(\bxi^*,y')
-\p_{y_3}\xi(y')\p_{y_1}u_2(\bxi^*,y')\Bigr).
\end{aligned}
\end{equation}
Combining \eqref{fix_curl_bc} and \eqref{trace_curl} gives the
boundary condition
\begin{equation}\label{fix_om1_bc}
\begin{aligned}
\omega_1(\bxi^*,y')
={}&\p_{y_2}\bigl(u_{3-}(\xi(y'),y')\bigr)
-\p_{y_3}\bigl(u_{2-}(\xi(y'),y')\bigr)\\
&-\frac1{a_0}\p_{y_2}R_3(y')
+\frac1{a_0}\p_{y_3}R_2(y')\\
&-\frac{L-\bxi^*}{L-\xi(y')}
\Bigl(\p_{y_2}\xi(y')\p_{y_1}u_3(\bxi^*,y')
-\p_{y_3}\xi(y')\p_{y_1}u_2(\bxi^*,y')\Bigr).
\end{aligned}
\end{equation}

Once $\omega_1$ is known, the other two components are determined
by \eqref{eq_om23} and $B=\bB$:
\begin{equation}\label{fix_om23}
\begin{aligned}
\omega_2&=\frac{u_2}{u_1}\omega_1
-\frac{\rho^{\ga-1}}{(\ga-1)u_1}\ml D_3^\xi S,\\
\omega_3&=\frac{u_3}{u_1}\omega_1
+\frac{\rho^{\ga-1}}{(\ga-1)u_1}\ml D_2^\xi S.
\end{aligned}
\end{equation}
These transport and algebraic relations are coupled to the
velocity equations below.

\subsubsection{The deformation-curl system for the velocity}

The deformation equation \eqref{def_eq} and the curl equations
become
\begin{equation}\label{fix_dc_eqs}
\begin{cases}
\displaystyle\sum_{i,j=1}^3(c^2\de_{ij}-u_iu_j)\ml D_i^\xi u_j
=\displaystyle\si\sum_{i=1}^3u_i
\ml D_i^\xi\bigl(\Phi(X_1(\y;\xi),y')\bigr),\\
\ml D_2^\xi u_3-\ml D_3^\xi u_2=\omega_1,\\
\ml D_3^\xi u_1-\ml D_1^\xi u_3=\omega_2,\\
\ml D_1^\xi u_2-\ml D_2^\xi u_1=\omega_3,
\end{cases}
\qquad\text{in }\dN_+.
\end{equation}
Here $\rho$ is given by \eqref{def_rho_p} with $B=\bB$ and
$\Phi$ evaluated at $(X_1(\y;\xi),y')$, and $c^2=\ga S\rho^{\ga-1}$.

At the shock, the boundary value of $u_1-\bu_+$ is given by
$g_1$ in \eqref{fix_shock_data}. At $y_1=L$, we use
$p=\bp_++\si p_{ex}$ and $B=\bB$ to solve for the positive value
of $u_1$. This defines the exit boundary function
\begin{equation}\label{def_exit_map}
\begin{aligned}
&g_e(S,u_2,u_3;\Phi,p_{ex})(y')\\
\quad:=\Bigg(&
2\bigl(\bB-\si\Phi(L,y')\bigr)
-\frac{2\ga}{\ga-1}S(L,y')^{1/\ga}
\bigl(\bp_++\si p_{ex}(y')\bigr)^{(\ga-1)/\ga}\\
&-u_2(L,y')^2-u_3(L,y')^2
\Bigg)^{1/2}-\bu_+.
\end{aligned}
\end{equation}
The boundary conditions for \eqref{fix_dc_eqs} are
\begin{equation}\label{fix_vel_bc}
\begin{cases}
u_1(\bxi^*,y')-\bu_+
=g_1\bigl(u_2(\bxi^*,y'),u_3(\bxi^*,y'),
\ml U_-(\xi(y'),y');\Phi(\xi(y'),y')\bigr),\\
u_1(L,y')-\bu_+=g_e(S,u_2,u_3;\Phi,p_{ex})(y'),\\
u_2=0\quad\text{on }\Ga_2^\pm,
\qquad u_3=0\quad\text{on }\Ga_3^\pm.
\end{cases}
\end{equation}
The integral solvability condition for this boundary value problem
will determine the shock location in the next section.

The following result uses the linear approximation
$\ml{\dU}_+$ and $\nabla_{y'}\dot\xi$
from Proposition~\ref{prop:lin} on $\dN_+$ and
$\ml C$, respectively.

\begin{theorem}\label{thm_fix_domain}
Under the assumptions of Theorem~\ref{thm_nonlin}, there are
constants $M>1$, $r_*>0$, and $\si_*>0$ such that, for
$0<\si\leq\si_*$, the transformed problem formulated in
Subsection~\ref{sub:fixed} has a unique solution
$(\ml U,\xi)$ in the following class:
\begin{equation}\label{fix_solution_class}
\begin{aligned}
&\|\ml U-\ml{\bU}_+\|_{2,\al;\dN_+}
+\|\nabla_{y'}\xi\|_{2,\al;\ml C}\leq M\si,\\
&\|\ml U-\ml{\bU}_+-\ml{\dU}_+\|_{1,\al;\dN_+}
+\|\nabla_{y'}\xi-\nabla_{y'}\dot\xi\|_{1,\al;\ml C}
\leq\si^{3/2},\\
&|\xi(1,1)-\bxi^*|\leq r_*\si.
\end{aligned}
\end{equation}
Here $\ml U=(u_1,u_2,u_3,S,B)\in
C^{2,\al}(\overline{\dN_+})$, and
$\xi\in C^{3,\al}(\overline{\ml C})$.
The class also satisfies the wall compatibility conditions
\begin{equation}\label{fix_wall_comp}
\begin{cases}
u_2=\p_{y_2}^2u_2=0,\quad
\p_{y_2}(u_1,u_3,S,B)=0,
&\text{on }\Ga_2^\pm,\\
u_3=\p_{y_3}^2u_3=0,\quad
\p_{y_3}(u_1,u_2,S,B)=0,
&\text{on }\Ga_3^\pm,
\end{cases}
\end{equation}
and
\begin{equation}\label{fix_xi_wall}
\begin{cases}
\p_{y_2}\xi=\p_{y_2}^3\xi=0,
&\text{on }y_2=\pm1,\\
\p_{y_3}\xi=\p_{y_3}^3\xi=0,
&\text{on }y_3=\pm1.
\end{cases}
\end{equation}

The solution satisfies $B=\bB$ and the sharper estimate
\begin{equation}\label{lin_cmp}
\|\ml U-\ml{\bU}_+-\ml{\dU}_+\|_{1,\al;\dN_+}
+\|\nabla_{y'}\xi-\nabla_{y'}\dot\xi\|_{1,\al;\ml C}
\leq C\si^2.
\end{equation}
\end{theorem}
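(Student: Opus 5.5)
We construct the solution as the fixed point of an iteration on a closed set $\ml X$. It consists of pairs $(\widehat V,\widehat{\bm q})$, where $\widehat V=\widehat{\ml U}-\ml{\bU}_+$ with $\widehat B=\bB$ and $\widehat{\bm q}$ a candidate for $\nabla_{y'}\xi$. The set $\ml X$ is cut out by the first two lines of \eqref{fix_solution_class} together with the compatibility conditions \eqref{fix_wall_comp}--\eqref{fix_xi_wall}. Given $(\widehat V,\widehat{\bm q})\in\ml X$ and a trial value $\ddot\xi^*$, we rebuild a shock function
\begin{equation*}
\xi(y')=\bxi^*+\ddot\xi^*+\int_{(1,1)}^{y'}\widehat{\bm q}\cdot d\bm l .
\end{equation*}
The path integral is well defined once $\widehat{\bm q}$ is curl free; otherwise we first project $\widehat{\bm q}$ onto gradients by a Neumann Poisson problem on $\ml C$. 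With this $\xi$ frozen in the coefficients $\ml D_i^\xi$, in $\Phi(X_1(\y;\xi),y')$ and in the upstream traces $\ml U_-(\xi(y'),y')$ from Proposition~\ref{prop_up}, the iteration proceeds in four steps.

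\textbf{Step 1.} Set $B\equiv\bB$. Solve the linear transport problem for $S$ with the vector field built from $\widehat{\u}$ and boundary datum $g_4$, evaluated at $\widehat V$, from \eqref{fix_S_bc}.

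\textbf{Step 2.} Solve the transport equation \eqref{fix_om1_eq} for $\om_1$, with all nonlinear coefficients evaluated at the iterate. Its boundary value comes from \eqref{fix_om1_bc}, whose right-hand side involves only $\widehat{\bm q}$, $\widehat V$ and upstream data. Then define $\om_2$ and $\om_3$ by \eqref{fix_om23}.

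\textbf{Step 3.} Solve a linearized deformation-curl system on $\dN_+$. The principal part is the constant-coefficient operator of \eqref{lin_vel_eq}. All quadratic and coordinate-change terms are moved to the right-hand side and evaluated at the iterate. The boundary data are $g_1$ at $\Ga_s^*$ and $g_e$ at $\Ga_e$, also evaluated at the iterate. The curl data $(\om_1,\om_2,\om_3)$ are not divergence free in general, so we follow \cite{WengXin2026a}: add $\nabla\Pi$ to the curl equations, where $\Pi$ solves $\Delta\Pi=\div\bm\om$ with homogeneous mixed conditions. After this correction the data are divergence free, the velocity is written as the corrected curl part plus $\nabla\phi$, and $\phi$ solves a Neumann problem for $(1-\bM_+^2)\p_{y_1}^2+\p_{y_2}^2+\p_{y_3}^2$. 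Its solvability condition is a scalar equation $\ml I(\widehat V,\widehat{\bm q};\ml U_-,\Phi,p_{ex};\ddot\xi^*)=0$. Repeating the computation of Lemmas~\ref{lem_lin_solva} and \ref{lem_K_positive}, now using \eqref{up_quad_est} for the upstream trace at $\xi$, gives $\ml I(0)=O(\si^2)$ and the derivative formula \eqref{I_deriv}. By \eqref{lin_pos_mono}, the implicit function theorem applied to $\si^{-1}\ml I$ yields a unique $\ddot\xi^*$ with $|\ddot\xi^*|\le r_*\si$. For this $\ddot\xi^*$ the Neumann problem is solvable, and we obtain a new $V$.

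\textbf{Step 4.} Define the new $\bm q$ from \eqref{fix_slope_exp}, evaluated at the newly computed $u_2,u_3$.

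For the estimates, the even and odd reflections of $\Phi$, $p_{ex}$ and the unknowns across the walls, as in Proposition~\ref{prop_up}, reduce the problem to a periodic slab. This makes the edges harmless, and the reflections are preserved by each step. Schauder estimates give $C^{2,\al}$ bounds of size $C_0\si$ with $C_0$ independent of $M$; we then choose $M=2C_0$. Subtracting the linear problem of Proposition~\ref{prop:lin}, whose solution $\ml{\dU}_+$ satisfies exactly the principal equations, leaves a difference controlled in $C^{1,\al}$ by $C(M)\si^2\le\si^{3/2}$ for $\si$ small. The map therefore sends $\ml X$ into itself. Contraction is proved in the weaker norm $C^{1,\al}$, using the $C^{2,\al}$ bounds on the coefficients; since $\ml X$ is closed under $C^{1,\al}$ convergence and bounded in $C^{2,\al}$, the contraction mapping principle gives a fixed point. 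We then check that the fixed point solves the original problem. First, $\Pi\equiv0$: the divergence of the corrected curl data vanishes because the transport equation \eqref{fix_om1_eq} is exactly $\div\bm\om=0$, once $\bm\om$ is the curl of the computed $\u$. Second, \eqref{fix_om1_bc} makes $\bm q$ curl free, so $\bm q=\nabla\xi$ and the tangential conditions hold. Uniqueness in the class \eqref{fix_solution_class} follows from the same contraction estimate, and \eqref{lin_cmp} from the $\si^2$ bound on the difference with the linear solution. Since $\p_{y_i}\xi$ are $C^{2,\al}$ combinations of traces, $\xi\in C^{3,\al}$.

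The main obstacle is the nonlinear determination of the shock position in Step 3. We need $\ml I$ to be $C^1$ in $\ddot\xi^*$ with $\p_{\ddot\xi^*}\ml I$ equal to the $O(\si)$ leading term in \eqref{I_deriv} up to $o(\si)$, uniformly over $\ml X$, while $\ml I(0)$ must be genuinely $O(\si^2)$. This requires carefully tracking how $\ddot\xi^*$ enters through $\ml U_-(\xi,\cdot)$, through $\Phi(X_1,\cdot)$ and through the Jacobian of the coordinate change. We also need the dependence of $\ddot\xi^*$ on the iterate to be Lipschitz with constant $O(1)$ in the $C^{1,\al}$ norm, so that dividing by $\si$ does not destroy the contraction.
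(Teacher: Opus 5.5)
Your plan follows the paper's proof step for step: entropy transport, $\om_1$ transport with data from the mixed-derivative compatibility, the enlarged deformation-curl system with $\Pi$, the implicit function theorem for $\si^{-1}\ml I$, the slope update, and contraction in $C^{1,\al}$. The step that fails as written is the verification of $\Pi\equiv0$.

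You say the divergence of the curl data vanishes ``once $\bm\om$ is the curl of the computed $\u$''. That is circular. The statement $\bm\om=\mathrm{curl}^{\xi}\u$ (the physical curl written with the $\ml D_i^{\xi}$) is exactly what $\Pi\equiv0$ is supposed to deliver. There is also no direct identity in the fixed domain. At the fixed point the data of the enlarged system are $\bm G=\bm\om+(\mathrm{curl}_y-\mathrm{curl}^{\xi})\u$. The $\om_1$ transport equation together with \eqref{fix_om23} gives only $\sum_i\ml D_i^{\xi}\om_i=0$, not $\mathrm{div}_y\bm\om=0$. Hence $\mathrm{div}_y\bm G=\mathrm{div}_y(\bm\om-\mathrm{curl}^{\xi}\u)$ has no reason to vanish before $\bm\om=\mathrm{curl}^{\xi}\u$ is known. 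So $\Delta_y\Pi=\mathrm{div}_y\bm G$ is not a homogeneous problem a priori.

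The missing argument is the paper's. At the fixed point the enlarged system reads $\nabla_y\Pi=\bm\om-\mathrm{curl}^{\xi}\u$; see \eqref{fix_Pi_gradient}. Apply the transformed divergence $\sum_i\ml D_i^{\xi}$. It kills $\mathrm{curl}^{\xi}\u$, because the $\ml D_i^{\xi}$ are push-forwards of $\p_{x_i}$ and commute. It kills $\bm\om$ by the transport equation. So $\Pi$ solves the homogeneous, uniformly elliptic equation $\sum_i\ml D_i^{\xi}\p_{y_i}\Pi=0$, which is \eqref{fix_Pi_eq}. This is a perturbed Laplacian, not $\Delta_y\Pi=0$. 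With $\p_{y_1}\Pi=0$ on $\Ga_s^*$ and $\Pi=0$ elsewhere, the maximum principle (with the Hopf lemma on $\Ga_s^*$) gives $\Pi\equiv0$.

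Two smaller remarks.

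\emph{Curl-free slopes.} Evaluate the remainders $\ml R_i$ in the slope update at the input, as in the $\om_1$ boundary data. Then $\p_{y_1}\Pi=0$ on $\Ga_s^*$ makes the updated slopes curl free for \emph{every} input. Your projection onto gradients is therefore unnecessary; the paper simply builds $\p_{y_3}q_2=\p_{y_2}q_3$ into the iteration set.

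\emph{The obstacle you flag.} $\p_{\ddot\xi^*}\ml I$ is not a mere repetition of Lemmas~\ref{lem_lin_solva}--\ref{lem_K_positive}. To leading order, the Jacobian $\frac{L-\bxi^*}{L-\widehat\xi}$ contributes $\frac{1}{L-\bxi^*}$ times the linearized mass-balance defect of the input, namely $(1-\bM_+^2)\int_{\ml C}(\widehat V_1(L,\cdot)-\widehat V_1(\bxi^*,\cdot))\,dy'-\si\frac{\bu_+}{\bc_+^2}\int_{\ml C}(\Phi(L,\cdot)-\Phi(\bxi^*,\cdot))\,dy'$. For a general $O(\si)$ input this is of the same order as the leading term $\frac{1-\bM_+^2}{\brho\bu}\si F'(\bxi^*)$. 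It becomes $O(\si^{3/2})$ only because $\ml{\dU}_+$ has zero defect and $\|\widehat V-\ml{\dU}_+\|_{1,\al}\le\si^{3/2}$. This is why the second line of \eqref{fix_solution_class} must belong to the iteration set.
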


The next section proves this theorem. Returning to the original
coordinates then gives Theorem~\ref{thm_nonlin}.
\section{The nonlinear iteration}
\label{sec:iter}

Theorem~\ref{thm_fix_domain} is proved by iteration. Since $y'=x'$,
the data $\dot g_1,\dot g_4,\dot g_e$ on
$\ml C$ retain the notation of Section~2.
The first step is to specify the unknowns and the iteration space.

\subsection{The iteration setting}
\label{sub:iter_space}

Set
\begin{equation}\label{it_state}
V:=\ml U-\ml{\bU}_+=(V_1,V_2,V_3,V_4,V_5).
\end{equation}
Since $B=\bB$ by \eqref{fix_B}, we have $V_5=0$, and
\begin{equation*}
\u=\bu_+\bm e_1+(V_1,V_2,V_3),\qquad S=\bS_++V_4.
\end{equation*}
We write $\xi(y')=\bxi^*+\ddot\xi(y')$ and set
\begin{equation}\label{it_shock_variables}
\ddot\xi^*:=\ddot\xi(1,1),\qquad
q_i:=\p_{y_i}\ddot\xi=\p_{y_i}\xi,\quad i=2,3,
\qquad \bm q:=(q_2,q_3).
\end{equation}
The two derivatives satisfy
\begin{equation}\label{it_shape_comp}
\p_{y_3}q_2=\p_{y_2}q_3\qquad\text{in }\ml C.
\end{equation}
Conversely, for every pair satisfying \eqref{it_shape_comp},
the shock is recovered from $(\ddot\xi^*,\bm q)$ by
\begin{equation}\label{it_shock_formula}
\xi(y_2,y_3)=\bxi^*+\ddot\xi^*
+\int_1^{y_2}q_2(s,1)\,ds
+\int_1^{y_3}q_3(y_2,s)\,ds.
\end{equation}
In particular, this formula has derivatives $q_2,q_3$ and takes
the value $\bxi^*+\ddot\xi^*$ at $(1,1)$.

The iteration variables are $(V,\bm q)$. We require
$V\in C^{2,\al}(\overline{\dN}_+)$,
$q_i\in C^{2,\al}(\overline{\ml C})$, $i=2,3$, and $V_5=0$.
The wall conditions are
\begin{equation}\label{it_shape_wall}
\begin{cases}
q_2=\p_{y_2}^2q_2=0,&y_2=\pm1,\\
q_3=\p_{y_3}^2q_3=0,&y_3=\pm1,
\end{cases}
\end{equation}
and
\begin{equation}\label{it_wall2}
V_2=\p_{y_2}V_1=\p_{y_2}V_3
=\p_{y_2}V_4=\p_{y_2}^2V_2=0
\qquad\text{on }\Ga_2^\pm.
\end{equation}
On the other pair of walls, the corresponding conditions are
\begin{equation}\label{it_wall3}
V_3=\p_{y_3}V_1=\p_{y_3}V_2
=\p_{y_3}V_4=\p_{y_3}^2V_3=0
\qquad\text{on }\Ga_3^\pm.
\end{equation}
Let $\mk X$ denote the class satisfying these conditions and
\eqref{it_shape_comp}. For a constant $M>1$, to be chosen in the
estimates below, define
\begin{equation}\label{it_high_space}
\mk X_{M\si}:=\left\{
(V,\bm q)\in\mk X:\,
\|V\|_{2,\al;\dN_+}
+\sum_{i=2}^3\|q_i\|_{2,\al;\ml C}\leq M\si
\right\}.
\end{equation}
The linear solution defines the smaller neighborhood
\begin{equation}\label{it_space}
\mk K:=\left\{
(V,\bm q)\in\mk X_{M\si}:\,
\|V-\ml{\dU}_+\|_{1,\al;\dN_+}
+\sum_{i=2}^3\|q_i-\p_{y_i}\dot\xi\|_{1,\al;\ml C}
\leq\si^{3/2}
\right\}.
\end{equation}
For $0<\si\leq1$, \eqref{est_lin_approx} implies
\begin{equation}\label{it_low_bound}
\|\widehat V\|_{1,\al;\dN_+}
+\sum_{i=2}^3\|\widehat q_i\|_{1,\al;\ml C}
\leq C\si,
\end{equation}
for $(\widehat V,\widehat{\bm q})\in\mk K$, with $C$ independent of $M$.
The $C^{2,\al}$ bound is used to construct the updated functions,
and their differences are estimated in $C^{1,\al}$.

Fix an input $(\widehat V,\widehat{\bm q})\in\mk K$, where
$\widehat{\bm q}=(\widehat q_2,\widehat q_3)$, and set
\begin{equation}\label{it_input_state}
\widehat\u:=\bu_+\bm e_1+
(\widehat V_1,\widehat V_2,\widehat V_3).
\end{equation}
For the input shock derivatives $\widehat{\bm q}$, define
\begin{equation}\label{it_input_shock}
\widehat\xi(y_2,y_3;\ddot\xi^*)=\bxi^*+\ddot\xi^*
+\int_1^{y_2}\widehat q_2(s,1)\,ds
+\int_1^{y_3}\widehat q_3(y_2,s)\,ds.
\end{equation}
The displacement $\ddot\xi^*$ is determined for each input
$(\widehat V,\widehat{\bm q})$ by the solvability condition
in Step~3. In Step~4, the updated shock is reconstructed
using this displacement and the updated derivatives $\bm q$.
Before the displacement is determined, the construction and
estimates hold uniformly
for sufficiently small $|\ddot\xi^*|$, so that $0<\widehat\xi<L$.
In particular,
\begin{equation*}
\|\widehat\xi-\bxi^*\|_{3,\al;\ml C}
\leq |\ddot\xi^*|+C\sum_{i=2}^3
\|\widehat q_i\|_{2,\al;\ml C}
\leq |\ddot\xi^*|+CM\si.
\end{equation*}
The dependence of the iteration data on the input, the prescribed
data and the displacement is displayed as, for example,
\begin{equation*}
\ddot g_4(\widehat V,\widehat{\bm q};
\ml U_-,\Phi;\ddot\xi^*)(y').
\end{equation*}
After specifying these arguments, we shorten this to
$\ddot g_4(y')$ when no confusion can arise.

\subsection{Construction of the iteration map}
\label{sub:iter_map}

The construction begins with the entropy and vorticity transport
problems. The solvability condition for the enlarged deformation-curl
system then determines the displacement, after which the velocity
is obtained from that system. Finally, the two shock derivatives
are updated. In the first two steps, all quantities retain their
dependence on the displacement until it is determined.

\noindent
\textit{Step~1. The entropy.}
For the given input, \eqref{fix_S_eq} gives the entropy equation
\begin{equation}\label{it_S_eq}
\p_{y_1}V_4+\widehat b_2\p_{y_2}V_4
+\widehat b_3\p_{y_3}V_4=0\qquad\text{in }\dN_+,
\end{equation}
where
\begin{equation}\label{it_S_coeff}
\widehat b_i:=\frac{\widehat u_i}{
\displaystyle
\frac{L-\bxi^*}{L-\widehat\xi(y')}\widehat u_1
-\frac{L-y_1}{L-\widehat\xi(y')}
\sum_{j=2}^3\widehat u_j\widehat q_j},\qquad i=2,3.
\end{equation}
The denominator is $\bu_++O(\si+|\ddot\xi^*|)>0$, and
\begin{equation*}
\sum_{i=2}^3\|\widehat b_i\|_{2,\al;\dN_+}\leq C\si.
\end{equation*}
The boundary condition is
\begin{equation}\label{it_S_bc}
V_4(\bxi^*,y')=\ddot g_4(y'),\qquad y'\in\ml C,
\end{equation}
where \eqref{fix_g4_exp} gives
\begin{equation}\label{it_S_data}
\begin{split}
\ddot g_4(\widehat V,\widehat{\bm q};
\ml U_-,\Phi;\ddot\xi^*)(y')
:=&g_4\bigl(
\widehat V_2(\bxi^*,y'),\widehat V_3(\bxi^*,y'),
\ml U_-(\widehat\xi(y'),y');\Phi(\widehat\xi(y'),y')\bigr)\\
=&b_s^sV_{1-}(\widehat\xi(y'),y')
+\si b_\Phi^s\Phi(\widehat\xi(y'),y')\\
&+\ml R_4\bigl(\widehat V_2(\bxi^*,y'),
\widehat V_3(\bxi^*,y'),V_-(\widehat\xi(y'),y'),
\si\Phi(\widehat\xi(y'),y')\bigr).
\end{split}
\end{equation}
Here $\ml R_4$ denotes $R_4$ evaluated at the iteration
arguments displayed above. Thus,
\begin{equation}\label{rem_S}
\begin{split}
\|\ml R_4\|_{2,\al;\ml C}
&\leq C\Bigl(
\|\widehat V\|_{2,\al;\dN_+}
+\|V_-\|_{2,\al;\ml N}
+\si\|\Phi\|_{2,\al;\ml N}\Bigr)^2
\leq C\si^2,\\
\|\ddot g_4\|_{2,\al;\ml C}
&\leq C\Bigl(\|V_-\|_{2,\al;\ml N}
+\si\|\Phi\|_{2,\al;\ml N}
+\|\ml R_4\|_{2,\al;\ml C}\Bigr)
\leq C\si.
\end{split}
\end{equation}
The reflection properties of the Rankine-Hugoniot conditions and
\eqref{up_wall_comp} yield
\begin{equation}\label{it_S_wall_data}
\begin{cases}
\p_{y_2}\ddot g_4=\p_{y_2}\ml R_4=0,
&\text{on }y_2=\pm1,\\
\p_{y_3}\ddot g_4=\p_{y_3}\ml R_4=0,
&\text{on }y_3=\pm1.
\end{cases}
\end{equation}

For $\y=(y_1,y')$, let $\bm Y(t;\y)=(Y_2(t;\y),Y_3(t;\y))$
solve the characteristic equations
\begin{equation}\label{S_chars}
\frac{dY_i}{dt}=\widehat b_i(t,\bm Y(t;\y)),
\qquad Y_i(y_1;\y)=y_i,\qquad i=2,3,
\end{equation}
and set
\begin{equation*}
\bm\beta(\y;\widehat\xi,\widehat V)
:=\bm Y(\bxi^*;\y).
\end{equation*}
Thus $\bm\beta$ gives the transverse coordinates where the
characteristic reaches $y_1=\bxi^*$. Since $\widehat b_2=0$ on
$y_2=\pm1$ and $\widehat b_3=0$ on $y_3=\pm1$, the
characteristics remain in $\overline{\ml C}$. We obtain
\begin{equation}\label{S_formula}
V_4(\y)=\ddot g_4\bigl(
\bm\beta(\y;\widehat\xi,\widehat V)\bigr).
\end{equation}
The characteristic equations and this formula imply
\begin{equation*}
\begin{split}
\|\bm\beta(\cdot;\widehat\xi,\widehat V)-y'\|_{2,\al;\dN_+}
&\leq C\sum_{i=2}^3\|\widehat b_i\|_{2,\al;\dN_+}\leq C\si,\\
\|V_4\|_{2,\al;\dN_+}
&\leq C\|\ddot g_4\|_{2,\al;\ml C}\leq C\si.
\end{split}
\end{equation*}
The wall conditions imply
\begin{equation*}
\p_{y_2}V_4=0\quad\text{on }\Ga_2^\pm,
\qquad
\p_{y_3}V_4=0\quad\text{on }\Ga_3^\pm.
\end{equation*}
The extensions across the walls give the stated regularity
up to their intersections.

To compare $V_4$ with the linear entropy, we use
$\p_{y_1}\dS_+=0$ and subtract its equation from
\eqref{it_S_eq}. This gives
\begin{equation}\label{S_lin_diff}
\p_{y_1}(V_4-\dS_+)
+\sum_{i=2}^3\widehat b_i\p_{y_i}(V_4-\dS_+)
=-\sum_{i=2}^3\widehat b_i\p_{y_i}\dS_+
\quad\text{in }\dN_+,
\end{equation}
with boundary value
\begin{equation*}
\begin{split}
(V_4-\dS_+)(\bxi^*,y')
=&\ddot g_4(y')-\dot g_4(y')\\
=&b_s^s\bigl(V_{1-}(\widehat\xi(y'),y')
-\du_{1-}(\bxi^*,y')\bigr)\\
&+\si b_\Phi^s\bigl(\Phi(\widehat\xi(y'),y')
-\Phi(\bxi^*,y')\bigr)+\ml R_4(y').
\end{split}
\end{equation*}
By \eqref{up_quad_est} and the fundamental theorem of calculus,
\begin{equation*}
\begin{split}
\|\ddot g_4-\dot g_4\|_{1,\al;\ml C}
&\leq C\Bigl(
\|V_- -\ml{\dU}_-\|_{1,\al;\ml N}
+\si\|\widehat\xi-\bxi^*\|_{1,\al;\ml C}
+\|\ml R_4\|_{1,\al;\ml C}\Bigr)\\
&\leq C\bigl(\si^2+\si|\ddot\xi^*|\bigr).
\end{split}
\end{equation*}
The transport estimate applied to
\eqref{S_lin_diff} now yields
\begin{equation}\label{est_it_S}
\begin{split}
\|V_4-\dS_+\|_{1,\al;\dN_+}
&\leq C\left(
\|\ddot g_4-\dot g_4\|_{1,\al;\ml C}
+\sum_{i=2}^3\|\widehat b_i\p_{y_i}\dS_+\|_{1,\al;\dN_+}
\right)\\
&\leq C\bigl(\si^2+\si|\ddot\xi^*|\bigr).
\end{split}
\end{equation}

\medskip
\noindent
\textit{Step~2. The vorticity field.}
Equation \eqref{fix_om1_eq} gives the transport equation
\begin{equation}\label{it_om1_eq}
\p_{y_1}\om_1+\widehat b_2\p_{y_2}\om_1
+\widehat b_3\p_{y_3}\om_1+\mu\om_1=f_\om
\qquad\text{in }\dN_+,
\end{equation}
where $\widehat b_i$ are given by \eqref{it_S_coeff}, and
\begin{equation}\label{it_om_coeff}
\begin{aligned}
\mu&:=\frac1\chi\sum_{i=2}^3
\ml D_i^{\widehat\xi}\left(\frac{\widehat u_i}{\widehat u_1}\right),
\qquad f_\om:=\frac{\ml F_\om}{\chi},\\
\chi&:=\frac{L-\bxi^*}{L-\widehat\xi(y')}
-\frac{L-y_1}{L-\widehat\xi(y')}
\sum_{i=2}^3\frac{\widehat u_i}{\widehat u_1}\widehat q_i.
\end{aligned}
\end{equation}
Here $\chi>0$ by the positivity of the denominator in
\eqref{it_S_coeff}. The source term is
\begin{equation}\label{it_Fomega}
\begin{aligned}
\ml F_\om
:=\ml D_2^{\widehat\xi}
\left(\frac{\rho^{\ga-1}}{(\ga-1)\widehat u_1}\right)
\ml D_3^{\widehat\xi}S
-\ml D_3^{\widehat\xi}
\left(\frac{\rho^{\ga-1}}{(\ga-1)\widehat u_1}\right)
\ml D_2^{\widehat\xi}S,
\end{aligned}
\end{equation}
where
\begin{equation*}
S:=\bS_++V_4,\qquad
\rho(\y):=\rho\bigl(\bB,S,|\widehat\u|^2,
\Phi(X_{\widehat\xi}(\y))\bigr).
\end{equation*}

Substituting the updated tangential velocity into the linear part of
\eqref{fix_slope_exp} and evaluating the remainders at the input yields
\begin{equation}\label{shock_slopes}
\begin{split}
q_i(y')={}&a_0\bigl(V_i(\bxi^*,y')
-V_{i-}(\widehat\xi(y'),y')\bigr)\\
&+\ml R_i\bigl(\widehat V_2(\bxi^*,y'),
\widehat V_3(\bxi^*,y'),V_-(\widehat\xi(y'),y'),
\si\Phi(\widehat\xi(y'),y')\bigr),\qquad i=2,3.
\end{split}
\end{equation}
Here $\ml R_i$ denotes $R_i$ evaluated at the displayed arguments, for $i=2,3$.

As in \eqref{rem_S},
\begin{equation}\label{rem_slopes}
\sum_{i=2}^3\|\ml R_i\|_{2,\al;\ml C}
\leq C\Bigl(\|\widehat V\|_{2,\al;\dN_+}
+\|V_-\|_{2,\al;\ml N}
+\si\|\Phi\|_{2,\al;\ml N}\Bigr)^2
\leq C\si^2.
\end{equation}
Their wall identities are
\begin{equation}\label{rem_slopes_wall}
\begin{cases}
\ml R_2=\p_{y_2}^2\ml R_2=\p_{y_2}\ml R_3=0,&y_2=\pm1,\\
\ml R_3=\p_{y_3}^2\ml R_3=\p_{y_3}\ml R_2=0,&y_3=\pm1.
\end{cases}
\end{equation}
These follow by reflecting the two tangential momentum conditions
across each wall. We write $\ml R_i(y')$ for the evaluations in
\eqref{shock_slopes} when displaying the derivatives below.

Assuming \eqref{it_shape_comp} holds, we obtain
\begin{equation}\label{it_curl_data}
\begin{aligned}
&(\p_{y_2}V_3-\p_{y_3}V_2)(\bxi^*,y')\\
&\quad=\p_{y_2}\bigl(V_{3-}(\widehat\xi(y'),y')\bigr)
-\p_{y_3}\bigl(V_{2-}(\widehat\xi(y'),y')\bigr)\\
&\qquad-\frac1{a_0}\p_{y_2}\ml R_3(y')
+\frac1{a_0}\p_{y_3}\ml R_2(y').
\end{aligned}
\end{equation}
Evaluating the nonlinear term in \eqref{trace_curl}
at the input functions gives
\begin{equation}\label{it_curl_shock}
\begin{aligned}
&(\p_{y_2}V_3-\p_{y_3}V_2)(\bxi^*,y')\\
&\quad=\om_1(\bxi^*,y')
+\frac{L-\bxi^*}{L-\widehat\xi(y')}
\bigl(\widehat q_2\p_{y_1}\widehat V_3
-\widehat q_3\p_{y_1}\widehat V_2\bigr)(\bxi^*,y').
\end{aligned}
\end{equation}
Combining \eqref{it_curl_data} and \eqref{it_curl_shock}
yields the boundary condition
\begin{equation}\label{it_om1_bc}
\om_1(\bxi^*,y')=\ddot g_\om(y'),\qquad y'\in\ml C,
\end{equation}
where
\begin{equation}\label{it_om_data}
\begin{split}
&\ddot g_\om(\widehat V,\widehat{\bm q};
\ml U_-,\Phi;\ddot\xi^*)(y')\\
=&\p_{y_2}\bigl(V_{3-}(\widehat\xi(y'),y')\bigr)
-\p_{y_3}\bigl(V_{2-}(\widehat\xi(y'),y')\bigr)
-\frac1{a_0}\p_{y_2}\ml R_3(y')\\
&+\frac1{a_0}\p_{y_3}\ml R_2(y')
-\frac{L-\bxi^*}{L-\widehat\xi(y')}
\bigl(\widehat q_2\p_{y_1}\widehat V_3
-\widehat q_3\p_{y_1}\widehat V_2\bigr)(\bxi^*,y').
\end{split}
\end{equation}
By \eqref{up_invar}, the upstream velocity
is irrotational. Differentiating its values along the shock gives
\begin{equation*}
\begin{split}
&\p_{y_2}\bigl(V_{3-}(\widehat\xi(y'),y')\bigr)
-\p_{y_3}\bigl(V_{2-}(\widehat\xi(y'),y')\bigr)\\
=&\widehat q_2\p_{x_1}V_{3-}(\widehat\xi(y'),y')
-\widehat q_3\p_{x_1}V_{2-}(\widehat\xi(y'),y').
\end{split}
\end{equation*}
Thus, by \eqref{rem_slopes},
\begin{equation*}
\|\ddot g_\om\|_{1,\al;\ml C}
\leq C\left(
\sum_{i=2}^3\|\widehat q_i\|_{1,\al;\ml C}
\bigl(\|V_-\|_{2,\al;\ml N}
+\|\widehat V\|_{2,\al;\dN_+}\bigr)
+\sum_{i=2}^3\|\ml R_i\|_{2,\al;\ml C}
\right)\leq C\si^2.
\end{equation*}
Similarly, subtracting the constant background coefficient inside
\eqref{it_Fomega} gives
\begin{equation*}
\|f_\om\|_{1,\al;\dN_+}
\leq C\left\|
\frac{\rho^{\ga-1}}{(\ga-1)\widehat u_1}
-\frac{\brho_+^{\ga-1}}{(\ga-1)\bu_+}
\right\|_{2,\al;\dN_+}
\|V_4\|_{2,\al;\dN_+}\leq C\si^2.
\end{equation*}

Using the same characteristics as in
\eqref{S_chars}, we obtain
\begin{equation}\label{om_formula}
\begin{split}
\om_1(\y)
={}&\ddot g_\om\bigl(\bm\beta(\y;\widehat\xi,\widehat V)\bigr)
\exp\left(-\int_{\bxi^*}^{y_1}
\mu(t,\bm Y(t;\y))\,dt\right)\\
&+\int_{\bxi^*}^{y_1}f_\om(\tau,\bm Y(\tau;\y))
\exp\left(-\int_\tau^{y_1}
\mu(t,\bm Y(t;\y))\,dt\right)\,d\tau.
\end{split}
\end{equation}
Since $\dot\om_{1+}=0$, the transport equation and
its boundary data give
\begin{equation}\label{est_it_vort}
\|\om_1-\dot\om_{1+}\|_{1,\al;\dN_+}
\leq C\left(\|\ddot g_\om\|_{1,\al;\ml C}
+\|f_\om\|_{1,\al;\dN_+}\right)
\leq C\si^2.
\end{equation}
The boundary data and source satisfy
\begin{equation*}
\ddot g_\om=0\quad\text{on }\p\ml C,
\qquad f_\om=0\quad\text{on }\Ga_2^\pm\cup\Ga_3^\pm.
\end{equation*}
Restricting \eqref{it_om1_eq} to the walls therefore gives
\begin{equation}\label{it_om_wall}
\om_1=0\qquad\text{on }\Ga_2^\pm\cup\Ga_3^\pm.
\end{equation}
The remaining two components are given by \eqref{fix_om23}:
\begin{equation}\label{it_om23}
\begin{split}
\om_2&:=\frac{\widehat u_2}{\widehat u_1}\om_1
-\frac{\rho^{\ga-1}}{(\ga-1)\widehat u_1}
\ml D_3^{\widehat\xi}S,\\
\om_3&:=\frac{\widehat u_3}{\widehat u_1}\om_1
+\frac{\rho^{\ga-1}}{(\ga-1)\widehat u_1}
\ml D_2^{\widehat\xi}S.
\end{split}
\end{equation}
This completes the construction of the vorticity for the given
input and displacement.

\medskip
\noindent
\textit{Step~3. The deformation-curl system for the velocity.}
For each displacement $\ddot\xi^*$, separating the linear
principal part of \eqref{fix_dc_eqs} at the background state
gives the velocity system
\begin{equation}\label{it_vel_eqs}
\begin{cases}
(1-\bM_+^2)\p_{y_1}V_1+\p_{y_2}V_2+\p_{y_3}V_3=\ml F_0,\\
\p_{y_2}V_3-\p_{y_3}V_2=\ml G_1,\\
\p_{y_3}V_1-\p_{y_1}V_3=\ml G_2,\\
\p_{y_1}V_2-\p_{y_2}V_1=\ml G_3,
\end{cases}
\qquad\text{in }\dN_+.
\end{equation}
With $B=\bB$, the sound speed is
\begin{equation*}
\widehat c^2=(\ga-1)\left(
\bB-\frac12|\widehat\u|^2
-\si\Phi(X_1(\y;\widehat\xi),y')\right).
\end{equation*}
The remaining terms define the source
\begin{equation}\label{it_F0}
\begin{aligned}
\ml F_0(\widehat V,\widehat{\bm q};\Phi;\ddot\xi^*)(\y)
=&(1-\bM_+^2)\p_{y_1}\widehat V_1
+\p_{y_2}\widehat V_2+\p_{y_3}\widehat V_3\\
&-\frac{1}{\bc_+^2}
\sum_{i,j=1}^3(\widehat c^2\de_{ij}-\widehat u_i\widehat u_j)
\ml D_i^{\widehat\xi}\widehat u_j
+\frac{\si}{\bc_+^2}\sum_{i=1}^3\widehat u_i
\ml D_i^{\widehat\xi}\bigl(\Phi(X_1(\y;\widehat\xi),y')\bigr).
\end{aligned}
\end{equation}
For use in the estimates, set
\begin{equation*}
\widehat a_{ij}:=\frac{\widehat c^2\de_{ij}
-\widehat u_i\widehat u_j}{\bc_+^2},
\qquad \bar a_{ij}:=\de_{ij}-\bM_+^2\de_{i1}\de_{j1}.
\end{equation*}
Separating the coefficient and coordinate corrections gives
\begin{equation}\label{it_F0_split}
\begin{aligned}
\ml F_0={}&\sum_{i,j=1}^3(\bar a_{ij}-\widehat a_{ij})
\p_{y_i}\widehat V_j
+\sum_{i,j=1}^3\widehat a_{ij}
(\p_{y_i}-\ml D_i^{\widehat\xi})\widehat V_j
+\frac{\si}{\bc_+^2}\sum_{i=1}^3\widehat u_i
\ml D_i^{\widehat\xi}\bigl(\Phi(X_1(\y;\widehat\xi),y')\bigr).
\end{aligned}
\end{equation}
The curl equations in \eqref{fix_dc_eqs}, with the vorticity
from Step~2, give
\begin{equation}\label{it_G123}
\begin{aligned}
\ml G_1(\y):={}&\om_1(\y)
+\frac{L-y_1}{L-\widehat\xi(y')}
(\widehat q_2\p_{y_1}\widehat V_3
-\widehat q_3\p_{y_1}\widehat V_2),\\
\ml G_2(\y):={}&\om_2(\y)
+\frac{L-y_1}{L-\widehat\xi(y')}
\widehat q_3\p_{y_1}\widehat V_1
+\left(\frac{L-\bxi^*}{L-\widehat\xi(y')}-1\right)
\p_{y_1}\widehat V_3,\\
\ml G_3(\y):={}&\om_3(\y)
-\frac{L-y_1}{L-\widehat\xi(y')}
\widehat q_2\p_{y_1}\widehat V_1
-\left(\frac{L-\bxi^*}{L-\widehat\xi(y')}-1\right)
\p_{y_1}\widehat V_2.
\end{aligned}
\end{equation}
At $y_1=\bxi^*$, \eqref{fix_g1_exp} gives
\begin{equation}\label{it_shock_data}
\begin{aligned}
\ddot g_1(\widehat V,\widehat{\bm q};
\ml U_-,\Phi;\ddot\xi^*)(y')
=&g_1\bigl(\widehat V_2(\bxi^*,y'),
\widehat V_3(\bxi^*,y'),\ml U_-(\widehat\xi(y'),y');
\Phi(\widehat\xi(y'),y')\bigr)\\
=&b_u^sV_{1-}(\widehat\xi(y'),y')
+\si b_\Phi^u\Phi(\widehat\xi(y'),y')\\
&+\ml R_1\bigl(\widehat V_2(\bxi^*,y'),
\widehat V_3(\bxi^*,y'),V_-(\widehat\xi(y'),y'),
\si\Phi(\widehat\xi(y'),y')\bigr).
\end{aligned}
\end{equation}
Here $\ml R_1$ denotes $R_1$ evaluated at the displayed arguments.
As in \eqref{rem_S},
\begin{equation}\label{rem_vel}
\|\ml R_1\|_{2,\al;\ml C}
\leq C\bigl(\|\widehat V\|_{2,\al;\dN_+}
+\|V_-\|_{2,\al;\ml N}
+\si\|\Phi\|_{2,\al;\ml N}\bigr)^2\leq C\si^2.
\end{equation}
At $y_1=L$, expanding \eqref{def_exit_map} with the entropy
from Step~1 gives
\begin{equation}\label{it_exit_data}
\begin{aligned}
\ddot g_e(\widehat V,\widehat{\bm q};
\ml U_-,\Phi,p_{ex};\ddot\xi^*)(y')
=&g_e(\bS_++V_4,\widehat V_2,\widehat V_3;\Phi,p_{ex})(y')\\
=&-\frac{\bp_+}{(\ga-1)\bS_+\brho\bu}V_4(L,y')
-\frac{\si}{\brho\bu}p_{ex}(y')\\
&-\frac{\si}{\bu_+}\Phi(L,y')+\ml R_e(y').
\end{aligned}
\end{equation}
Here $\ml R_e(y')$ is the remainder of second order in the
expansion of \eqref{def_exit_map}, evaluated at
\begin{equation*}
\bigl(V_4(L,y'),\widehat V_2(L,y'),\widehat V_3(L,y'),
\si p_{ex}(y'),\si\Phi(L,y')\bigr).
\end{equation*}
Thus,
\begin{equation}\label{rem_exit}
\begin{split}
\|\ml R_e\|_{2,\al;\ml C}
\leq& C\bigl(\|V_4\|_{2,\al;\dN_+}
+\|\widehat V\|_{2,\al;\dN_+}
+\si\|p_{ex}\|_{2,\al;\ml C}
+\si\|\Phi\|_{2,\al;\ml N}\bigr)^2\\
\leq& C\si^2.
\end{split}
\end{equation}
Thus \eqref{it_vel_eqs} is endowed with
\begin{equation}\label{it_vel_bc}
\begin{cases}
V_1(\bxi^*,y')=\ddot g_1(y'),\qquad
V_1(L,y')=\ddot g_e(y'),\\
V_2=0\quad\text{on }\Ga_2^\pm,\qquad
V_3=0\quad\text{on }\Ga_3^\pm.
\end{cases}
\end{equation}
The wall conditions for the input and the upstream solution give
\begin{equation}\label{it_shock_wall_data}
\begin{cases}
\p_{y_2}\ddot g_1=\p_{y_2}\ml R_1=0,
&\text{on }y_2=\pm1,\\
\p_{y_3}\ddot g_1=\p_{y_3}\ml R_1=0,
&\text{on }y_3=\pm1,
\end{cases}
\end{equation}
and
\begin{equation}\label{it_exit_wall}
\begin{cases}
\p_{y_2}\ddot g_e=\p_{y_2}\ml R_e=0,
&\text{on }y_2=\pm1,\\
\p_{y_3}\ddot g_e=\p_{y_3}\ml R_e=0,
&\text{on }y_3=\pm1.
\end{cases}
\end{equation}
Moreover, \eqref{it_om_data} and \eqref{it_G123} imply
\begin{equation}\label{it_G1_shock}
\begin{aligned}
\ml G_1(\bxi^*,y')={}&
\p_{y_2}\bigl(V_{3-}(\widehat\xi(y'),y')\bigr)
-\p_{y_3}\bigl(V_{2-}(\widehat\xi(y'),y')\bigr)\\
&-\frac{1}{a_0}\p_{y_2}\ml R_3(y')
+\frac{1}{a_0}\p_{y_3}\ml R_2(y').
\end{aligned}
\end{equation}
The data also satisfy
\begin{equation}\label{it_G_wall}
\begin{cases}
\ml G_1=\ml G_3=\p_{y_2}\ml G_2=0,&\text{on }\Ga_2^\pm,\\
\ml G_1=\ml G_2=\p_{y_3}\ml G_3=0,&\text{on }\Ga_3^\pm,
\end{cases}
\end{equation}
and
\begin{equation}\label{it_F0_wall}
\p_{y_2}\ml F_0=0\quad\text{on }\Ga_2^\pm,
\qquad \p_{y_3}\ml F_0=0\quad\text{on }\Ga_3^\pm.
\end{equation}

The curl equations in \eqref{it_vel_eqs} require the
compatibility condition
\begin{equation}\label{it_curl_comp}
\p_{y_1}\ml G_1+\p_{y_2}\ml G_2+\p_{y_3}\ml G_3=0.
\end{equation}

For a given iteration input, this condition cannot hold in general.
An auxiliary function $\Pi$ is therefore introduced, giving the
enlarged system
\begin{equation}\label{it_dc}
\begin{cases}
(1-\bM_+^2)\p_{y_1}V_1
+\p_{y_2}V_2+\p_{y_3}V_3=\ml F_0,
&\text{in }\dN_+,\\
\p_{y_2}V_3-\p_{y_3}V_2+\p_{y_1}\Pi=\ml G_1,
&\text{in }\dN_+,\\
\p_{y_3}V_1-\p_{y_1}V_3+\p_{y_2}\Pi=\ml G_2,
&\text{in }\dN_+,\\
\p_{y_1}V_2-\p_{y_2}V_1+\p_{y_3}\Pi=\ml G_3,
&\text{in }\dN_+.
\end{cases}
\end{equation}
The velocity satisfies \eqref{it_vel_bc}, and the boundary conditions for $\Pi$
are given by
\begin{equation}\label{it_Pi_bc}
\p_{y_1}\Pi=0\quad\text{on }\Ga_s^*,
\qquad
\Pi=0\quad\text{on }\Ga_e\cup\Ga_2^\pm\cup\Ga_3^\pm.
\end{equation}

\begin{remark}\label{rem_Pi}
The auxiliary function $\Pi$ need not vanish during the iteration. At the
fixed point, \eqref{fix_Pi_eq} with the homogeneous boundary
conditions \eqref{it_Pi_bc} gives $\Pi=0$. Thus
\eqref{it_dc} reduces to \eqref{it_vel_eqs}, and the original
velocity equations are recovered.
\end{remark}

The following linear result gives the solvability and estimates
needed for this step.
\begin{lemma}\label{lem_it_dc}
Let $k=0$ or $1$. Suppose that
$F_0\in C^{k,\al}(\overline{\dN_+})$,
$\bm F\in C^{k,\al}(\overline{\dN_+})^3$, and
$h_1,h_e\in C^{k+1,\al}(\overline{\ml C})$.
Assume the wall compatibility conditions
\begin{equation*}
\begin{cases}
F_1=F_3=0 &\text{on }\Ga_2^\pm,\\
F_1=F_2=0 &\text{on }\Ga_3^\pm,\\
\p_{y_i}h_1=\p_{y_i}h_e=0
&\text{on }y_i=\pm1,\quad i=2,3.
\end{cases}
\end{equation*}
For $k=1$, assume in addition that
\begin{equation*}
\p_{y_2}F_0=\p_{y_2}F_2=0\quad\text{on }\Ga_2^\pm,
\qquad
\p_{y_3}F_0=\p_{y_3}F_3=0\quad\text{on }\Ga_3^\pm.
\end{equation*}
Then the problem
\begin{equation}\label{lin_mixed_dc}
\begin{cases}
(1-\bM_+^2)\p_{y_1}w_1
+\p_{y_2}w_2+\p_{y_3}w_3=F_0,
&\text{in }\dN_+,\\
\curl\w+\nabla\Pi=\bm F,
&\text{in }\dN_+,
\end{cases}
\end{equation}
subject to
\begin{equation}\label{lin_mixed_bc}
\begin{cases}
w_1=h_1,
&\text{on }\Ga_s^*,\\
w_1=h_e,
&\text{on }\Ga_e,\\
w_2=0,
&\text{on }\Ga_2^\pm,\\
w_3=0,
&\text{on }\Ga_3^\pm,\\
\p_{y_1}\Pi=0,
&\text{on }\Ga_s^*,\\
\Pi=0,
&\text{on }\Ga_e\cup\Ga_2^\pm\cup\Ga_3^\pm
\end{cases}
\end{equation}
has a unique solution if and only if
\begin{equation}\label{dc_compat}
(1-\bM_+^2)
\int_{\ml C}(h_e-h_1)\,dy'
=
\int_{\dN_+}F_0\,d\y.
\end{equation}
Moreover,
\begin{equation}\label{est_it_dc}
\begin{split}
\|\w\|_{k+1,\al;\dN_+}
+\|\Pi\|_{k+1,\al;\dN_+}
\leq& C\Bigl(
\|F_0\|_{k,\al;\dN_+}
+\|\bm F\|_{k,\al;\dN_+}\\
&+\|h_1\|_{k+1,\al;\ml C}
+\|h_e\|_{k+1,\al;\ml C}
\Bigr).
\end{split}
\end{equation}
Across $y_2=\pm1$, the functions $w_1,w_3$ extend evenly and
$w_2,\Pi$ extend oddly; across $y_3=\pm1$, $w_1,w_2$ extend
evenly and $w_3,\Pi$ extend oddly.
\end{lemma}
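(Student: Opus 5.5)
Taking the divergence of the curl equation in \eqref{lin_mixed_dc} removes $\w$ and isolates $\Pi$: since $\div\curl\w=0$, any solution must satisfy
\begin{equation*}
\Delta\Pi=\div\bm F\quad\text{in }\dN_+,\qquad
\p_{y_1}\Pi=0\ \text{on }\Ga_s^*,\qquad
\Pi=0\ \text{on }\Ga_e\cup\Ga_2^\pm\cup\Ga_3^\pm .
\end{equation*}
This mixed problem has homogeneous data and a Dirichlet part of positive measure, so it has a unique weak solution. To get $C^{k+1,\al}$ regularity up to the edges and corners, I extend $\Pi$ and $\bm F$ across the lateral walls by the stated parities ($\Pi$ odd across both walls; $F_1,F_3$ odd and $F_2$ even across $y_2=\pm1$, and symmetrically across $y_3=\pm1$). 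The wall conditions $F_1=F_3=0$ on $\Ga_2^\pm$ and $F_1=F_2=0$ on $\Ga_3^\pm$, plus the first-derivative conditions when $k=1$, make the extended $\bm F$ lie in $C^{k,\al}$ of the periodic slab. The problem then becomes a periodic slab problem with a Neumann face at $y_1=\bxi^*$ and a Dirichlet face at $y_1=L$. The slab has no corners in the transverse directions, and the two faces are disjoint, so Schauder theory for divergence-form data gives $\|\Pi\|_{k+1,\al}\le C\|\bm F\|_{k,\al}$.

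With $\Pi$ fixed, set $\bm G:=\bm F-\nabla\Pi$. Then $\div\bm G=0$. On the walls, $\bm G$ satisfies the parity conditions inherited from $\bm F$ and $\Pi$. On $\Ga_s^*$ the normal component $G_1=F_1$ is not forced to vanish; this is acceptable because the velocity problem carries no condition linking $\curl\w$ to that face.

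Next I solve $\curl\w=\bm G$ together with the modified divergence equation, following the scheme of Lemma~\ref{lem_lin_solva}. First construct a particular field $\w^0$ with $\curl\w^0=\bm G$ that obeys the wall conditions $w^0_2=0$ on $\Ga_2^\pm$ and $w^0_3=0$ on $\Ga_3^\pm$ and has the correct parities. One choice is $\w^0=\curl\bm A$, where each component of $\bm A$ solves a Poisson problem $-\Delta\bm A=\bm G$ on the extended periodic slab, with componentwise Dirichlet/Neumann conditions on $y_1=\bxi^*,L$ chosen so that $\div\bm A=0$. A simpler alternative integrates in $y_1$ from $\Ga_s^*$, as in the characteristic constructions of Step~2.

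Then write $\w=\w^0+(\p_{y_1}\phi,\p_{y_2}\phi,\p_{y_3}\phi)$, where $\phi$ solves the Neumann problem
\begin{equation*}
(1-\bM_+^2)\p_{y_1}^2\phi+\p_{y_2}^2\phi+\p_{y_3}^2\phi=F_0-\bigl((1-\bM_+^2)\p_{y_1}w^0_1+\p_{y_2}w^0_2+\p_{y_3}w^0_3\bigr),
\end{equation*}
with $\p_{y_1}\phi=h_1-w^0_1$ on $\Ga_s^*$, $\p_{y_1}\phi=h_e-w^0_1$ on $\Ga_e$, and homogeneous Neumann conditions on the walls. Integrating over $\dN_+$ gives the solvability condition for this Neumann problem. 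The contributions of $\w^0$ cancel by the divergence theorem together with its wall conditions, and what remains is exactly \eqref{dc_compat}. This gives the ``if and only if'': necessity follows by integrating the first equation of \eqref{lin_mixed_dc} directly, and sufficiency from the Fredholm alternative for the Neumann problem. After even extension of the data, which is allowed because $\p_{y_i}h_1=\p_{y_i}h_e=0$ on the walls, the Schauder estimates for this oblique problem on the slab give \eqref{est_it_dc}.

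Uniqueness is checked on the homogeneous problem. There the $\Pi$-problem forces $\Pi=0$, and then $\w$ is curl free, so $\w=\nabla\psi$. This $\psi$ solves the homogeneous Neumann problem, so it is constant, and hence $\w=0$. The reflection statements follow from uniqueness applied to the reflected solutions.

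The main obstacle is constructing $\w^0$ with the right wall and parity properties and with $C^{k+1,\al}$ control up to the shock and exit faces, given that $G_1$ does not vanish on $\Ga_s^*$. I would first try the vector-potential approach on the periodically extended slab, imposing $A_1$ Neumann and $A_2,A_3$ Dirichlet on $y_1=\bxi^*,L$. The point to verify is that these conditions make $\div\bm A$ harmonic with zero data, so that $\curl\curl\bm A=\bm G$. If that fails, the fallback is the explicit $y_1$-integration formula.
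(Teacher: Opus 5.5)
Your proposal is correct and has the paper's overall structure. You solve the mixed problem $\Delta\Pi=\div\bm F$, made edge-free by parity extension. You lift $\bm G=\bm F-\nabla\Pi$ by a divergence-free field with vanishing normal component on the walls. You then correct by $\nabla\phi$, where $\phi$ solves the anisotropic Neumann problem whose solvability condition is exactly \eqref{dc_compat}.

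What differs is how the lifting is built. The paper defines $\w^d$ by $\div\w^d=0$, $\curl\w^d=\bm F^d$, $\w^d\cdot\n=0$ on all of $\p\dN_+$, and takes existence from Helmholtz--Weyl theory. It gets regularity from $-\Delta\w^d=\curl\bm F^d$ with $w^d_1=0$, $\p_{y_1}w^d_2=F^d_3$, $\p_{y_1}w^d_3=-F^d_2$ on the axial faces, which needs divergence-form estimates when $k=0$.

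Your vector potential does the same job. The extended $\bm G$ is divergence free because its normal components extend evenly across the walls, so $\div\bm A$ is harmonic. It vanishes on the axial faces, since $\p_{y_1}A_1=0$ and $A_2=A_3=0$ there. Hence $\div\bm A\equiv0$ and $\curl\curl\bm A=\bm G$. The one check you did not list is solvability of the Neumann problem for $A_1$. It holds because $G_1=F_1-\p_{y_1}\Pi$ is odd across the walls, so $\int G_1=0$ over a period cell.

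Since $w^0_1=\p_{y_2}A_3-\p_{y_3}A_2=0$ on the axial faces, your $\w^0$ is in fact the paper's $\w^d$. Your route gives it explicitly, with Schauder estimates for scalar Poisson problems with $C^{k,\al}$ data even at $k=0$. Your uniqueness argument (homogeneous data force $\Pi=0$, then $\w=\nabla\psi$ with $\psi$ constant) also spells out what the paper only asserts.

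Two cautions. First, the $y_1$-integration fallback would not work: integrating in $y_1$ gains no regularity in $y'$, so it cannot give the $C^{k+1,\al}$ bound. Second, at $k=0$ the weak form for $\Pi$ must include the boundary term $\int_{\Ga_s^*}F_1\zeta\,dy'$, as the paper's does. Without it, the natural conormal condition for $\Delta\Pi=\div\bm F$ is $\p_{y_1}\Pi=F_1$, not $\p_{y_1}\Pi=0$.
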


\begin{proof}
Integrating the first equation and using the prescribed normal
velocity gives the necessity of \eqref{dc_compat}.
Assume \eqref{dc_compat}. Following the enlarged deformation-curl
formulation in \cite{WengXin2026a}, first solve
\begin{equation}\label{it_Pi_problem}
\begin{cases}
\Delta\Pi=\div\bm F,
&\text{in }\dN_+,\\
\p_{y_1}\Pi=0,
&\text{on }\Ga_s^*,\\
\Pi=0,
&\text{on }\Ga_e\cup\Ga_2^\pm\cup\Ga_3^\pm.
\end{cases}
\end{equation}
For $k=0$, this boundary value problem is understood weakly:
for every test function
$\zeta$ vanishing on
$\Ga_e\cup\Ga_2^\pm\cup\Ga_3^\pm$,
\begin{equation*}
\int_{\dN_+}\nabla\Pi\cdot\nabla\zeta\,d\y
=
\int_{\dN_+}\bm F\cdot\nabla\zeta\,d\y
+\int_{\Ga_s^*}F_1\zeta\,dy'.
\end{equation*}
The Dirichlet part of the boundary is nonempty, and the mixed
problem has a unique solution satisfying
\begin{equation*}
\|\Pi\|_{k+1,\al;\dN_+}
\leq C\|\bm F\|_{k,\al;\dN_+}.
\end{equation*}
To obtain this estimate up to the boundary, extend $\Pi$ oddly
across both $y_2=\pm1$ and $y_3=\pm1$. The assumed extensions reduce the
estimates where walls meet to the usual interior and boundary
estimates; see \cite{GilbargTrudinger2001}. The components of $\nabla\Pi$
have the same even and odd extensions as $(F_1,F_2,F_3)$. Hence
$\bm F^d:=\bm F-\nabla\Pi$ has the same property.

The equation for $\Pi$ gives
\begin{equation*}
\div\bm F^d=0,
\qquad
\int_{\p\dN_+}\bm F^d\cdot\n\,dS=0.
\end{equation*}
The next step is the standard div-curl construction; see
\cite{KozonoYanagisawa2009} for the Helmholtz-Weyl decomposition.
Here we use the wall reflections to apply the estimates up to the
edges. Since $\dN_+$ is simply connected and its boundary is
connected, there is a unique vector field $\w^d$ satisfying
\begin{equation}\label{it_hodge_part}
\div\w^d=0,
\qquad
\curl\w^d=\bm F^d,
\qquad
\w^d\cdot\n=0
\quad\text{on }\p\dN_+,
\end{equation}
with the stated lateral extensions. To justify the estimate up to
the edges, the field can be extended as
$(\mathrm{even},\mathrm{odd},\mathrm{even})$ across $y_2=\pm1$
and as $(\mathrm{even},\mathrm{even},\mathrm{odd})$ across $y_3=\pm1$.
The equations imply $-\Delta\w^d=\curl\bm F^d$. On the two
axial faces $y_1=\bxi^*,L$, the boundary conditions become
\begin{equation*}
w_1^d=0,\qquad
\p_{y_1}w_2^d=F_3^d,\qquad
\p_{y_1}w_3^d=-F_2^d.
\end{equation*}
After reflection, the component equations have ordinary Dirichlet
or Neumann boundary conditions near these faces. The interior and
boundary estimates, in divergence form when $k=0$, therefore give
\begin{equation*}
\|\w^d\|_{k+1,\al;\dN_+}
\leq C\|\bm F^d\|_{k,\al;\dN_+}.
\end{equation*}

Write $\w=\w^d+\nabla\phi$. The function $\phi$ satisfies
\begin{equation}\label{it_phi_problem}
\begin{cases}
(1-\bM_+^2)\p_{y_1}^2\phi
+\p_{y_2}^2\phi+\p_{y_3}^2\phi=F_0+F_w,
&\text{in }\dN_+,\\
\p_{y_1}\phi=h_1,
&\text{on }\Ga_s^*,\\
\p_{y_1}\phi=h_e,
&\text{on }\Ga_e,\\
\p_{y_2}\phi=0,
&\text{on }\Ga_2^\pm,\\
\p_{y_3}\phi=0,
&\text{on }\Ga_3^\pm,
\end{cases}
\end{equation}
where
\begin{equation*}
F_w=-(1-\bM_+^2)\p_{y_1}w_1^d
-\p_{y_2}w_2^d-\p_{y_3}w_3^d.
\end{equation*}
Since $\w^d\cdot\n=0$ on the boundary, integration gives
\begin{equation*}
\int_{\dN_+}F_w\,d\y=0.
\end{equation*}
Consequently, the compatibility condition for \eqref{it_phi_problem}
is exactly \eqref{dc_compat}. We fix the additive constant in
$\phi$ by requiring $\int_{\dN_+}\phi\,d\y=0$.
Even reflection of $\phi$ across the lateral walls reduces the
edge estimates to the usual Neumann estimates, which give
\begin{equation*}
\begin{aligned}
\|\nabla\phi\|_{k+1,\al;\dN_+}
&\leq C\Bigl(
\|F_0\|_{k,\al;\dN_+}+\|F_w\|_{k,\al;\dN_+}
+\|h_1\|_{k+1,\al;\ml C}+\|h_e\|_{k+1,\al;\ml C}\Bigr)\\
&\leq C\Bigl(
\|F_0\|_{k,\al;\dN_+}+\|\w^d\|_{k+1,\al;\dN_+}
+\|h_1\|_{k+1,\al;\ml C}+\|h_e\|_{k+1,\al;\ml C}\Bigr).
\end{aligned}
\end{equation*}
Together with the estimates for $\Pi$ and $\w^d$, this proves
\eqref{est_it_dc}, including the estimates up to the walls.
The uniqueness follows from that of the three boundary value problems.
\end{proof}

For the data in \eqref{it_dc} and \eqref{it_vel_bc},
the solvability condition \eqref{dc_compat} in
Lemma~\ref{lem_it_dc} determines $\ddot\xi^*$.
It takes the form
\begin{equation}\label{pos_eq}
\ml I
\bigl(
\widehat V,\widehat{\bm q};\ml U_-,\Phi,p_{ex};\ddot\xi^*
\bigr)=0,
\end{equation}
where
\begin{equation}\label{pos_fun}
\begin{split}
\ml I
\bigl(
\widehat V,\widehat{\bm q};\ml U_-,\Phi,p_{ex};\ddot\xi^*
\bigr)
:=
-(1-\bM_+^2)\int_{\ml C}\ddot g_1\,dy'
+(1-\bM_+^2)\int_{\ml C}\ddot g_e\,dy'
-\int_{\dN_+}\ml F_0(\y)\,d\y.
\end{split}
\end{equation}
Here $\ddot g_e$ uses the entropy obtained in Step~1. The dependence
on the iteration input and displacement is omitted below when clear.

\begin{lemma}[Determination of the shock position]
\label{lem_it_pos}
There exist constants $r_*>0$ and $\si_1>0$ such that,
for $0<\si\leq\si_1$ and every
$(\widehat V,\widehat{\bm q})\in\mk K$,
equation \eqref{pos_eq} admits a solution $\ddot\xi^*$ satisfying
\begin{equation}\label{est_it_pos}
|\ddot\xi^*|\leq r_*\si.
\end{equation}
\end{lemma}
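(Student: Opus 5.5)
The plan is to treat $\ml I$ as a scalar function of $\ddot\xi^*\in[-r\si,r\si]$ and apply the intermediate value theorem (or the implicit function theorem to $\si^{-1}\ml I$). Concretely, I would show that
\begin{equation*}
\si^{-1}\ml I(\ddot\xi^*)=\si^{-1}\ml I(0)+\frac{1-\bM_+^2}{\brho\bu}\bar K\Bigl(\int_{\ml C}\p_{x_1}\Phi(\bxi^*,y')\,dy'\Bigr)\ddot\xi^*\si^{-1}\cdot\si+\ml E(\ddot\xi^*),
\end{equation*}
with $|\si^{-1}\ml I(0)|\le C_0\si$ and $|\ml E(\ddot\xi^*)|\le C(\si+|\ddot\xi^*|)|\ddot\xi^*|$. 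Here $C_0$ and $C$ are independent of $M$ wherever only $C^{1,\al}$ norms of the input enter, by \eqref{it_low_bound}. The linear coefficient $\kappa:=\frac{1-\bM_+^2}{\brho\bu}\bar K\int_{\ml C}\p_{x_1}\Phi(\bxi^*,y')\,dy'$ is nonzero by Lemma~\ref{lem_K_positive} and \eqref{lin_pos_mono}. Choosing $r_*=2C_0/|\kappa|$ and $\si$ small then gives a sign change of $\si^{-1}\ml I$ at $\ddot\xi^*=\pm r_*\si$. Continuity of $\ml I$ in $\ddot\xi^*$ is clear from the construction in Steps~1–3, since all data depend smoothly on $\widehat\xi$ through composition.

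The first step is to expand $\ml I$ term by term, mimicking the linear computation in Lemma~\ref{lem_lin_solva}. Insert \eqref{it_shock_data}, \eqref{it_exit_data} and \eqref{it_F0_split} into \eqref{pos_fun}. The terms linear in the data are
\begin{equation*}
b_u^sV_{1-}(\widehat\xi,y'),\qquad \si b_\Phi^u\Phi(\widehat\xi,y'),\qquad V_4(L,y'),\qquad \si p_{ex},\qquad \si\Phi(L,y'),
\end{equation*}
together with the force term $\frac{\si}{\bc_+^2}\bu_+\p_{y_1}\Phi(X_1(\y;\widehat\xi),y')$ in $\ml F_0$. All remaining pieces are quadratic: $\ml R_1$, $\ml R_e$, and the coefficient and coordinate corrections in $\ml F_0$. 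The quadratic part of $\ml F_0$ is not small pointwise in $C^{1,\al}$ beyond $C\si^2$. However, only its integral enters, and $\|\widehat V\|_{1,\al}\le C\si$ suffices, so these terms are $O(\si^2)$ uniformly in $\ddot\xi^*$. Their $\ddot\xi^*$-derivatives are $O(\si^2)$ as well, since $\widehat\xi$ depends on $\ddot\xi^*$ only by translation.

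For the linear terms, $V_4(L,\cdot)$ is handled by \eqref{S_formula}: $V_4(L,y')=\ddot g_4(\bm\beta)$ with $\bm\beta=y'+O(\si)$, so $\int_{\ml C}V_4(L)=\int_{\ml C}\ddot g_4+O(\si^2)$. Here $\ddot g_4$ contains $b_s^sV_{1-}(\widehat\xi)+\si b_\Phi^s\Phi(\widehat\xi)$. Then $V_{1-}$ is replaced by $\du_{1-}$ using \eqref{up_quad_est}, with error $O(\si^2)$, and \eqref{lin_up_int} is applied at $t=\widehat\xi(y')$. Since $\widehat\xi=\bxi^*+\ddot\xi^*+O(M\si)$ in the transverse variation, I would write
\begin{equation*}
\Phi(\widehat\xi(y'),y')=\Phi(\bxi^*+\ddot\xi^*,y')+O(\si),
\end{equation*}
so that the error is $O(\si^2)$ after multiplying by $\si$. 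The same applies to $\du_{1-}(\widehat\xi,y')$. The source integral $\int_{\dN_+}\frac{\si\bu_+}{\bc_+^2}\p_{y_1}\Phi(X_1)\,d\y$ equals $\frac{\si\bu_+}{\bc_+^2}\int_{\ml C}\bigl(\Phi(L)-\Phi(\widehat\xi)\bigr)+O(\si^2)$, using $\p_{y_1}X_1=1+O(\si)$.

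Collecting terms exactly as in the proof of Lemma~\ref{lem_lin_solva} gives
\begin{equation*}
\ml I(\ddot\xi^*)=\frac{(1-\bM_+^2)\si}{\brho\bu}\Bigl(F(\bxi^*+\ddot\xi^*)-\int_{\ml C}p_{ex}\Bigr)+O(\si^2).
\end{equation*}
The sign convention should be checked against \eqref{I_deriv}. Since $F(\bxi^*)=\int p_{ex}$ by \eqref{lin_pos_eq}, Taylor expansion of $F$ gives the claimed form, with $F'(\bxi^*)=\bar K\int\p_{x_1}\Phi(\bxi^*)$ producing $\kappa$.

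The main obstacle is making the $O(\si^2)$ error uniform in $\ddot\xi^*\in[-r_*\si,r_*\si]$ with a constant independent of $r_*$; otherwise the choice of $r_*$ becomes circular. To handle this, I would keep the $\ddot\xi^*$-dependence explicit: shift-invariance shows that the error depends on $\ddot\xi^*$ only through $\Phi$ and $V_-$ evaluated at shifted points. Both are bounded in $C^{2,\al}(\ml N)$ independently of $r_*$, provided $r_*\si$ stays below a fixed fraction of $\min(\bxi^*,L-\bxi^*)$. The constant $C_0$ depends on $M$ only through $C^{2,\al}$ bounds appearing in quadratic remainders, which contribute $CM^2\si^2$. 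This is absorbed by taking $\si_1$ small relative to $M$, yielding $|\ml I(0)|\le C_0\si^2$ for $\si\le\si_1(M)$. If uniqueness is also wanted, the same expansion differentiated in $\ddot\xi^*$ gives $\p_{\ddot\xi^*}\ml I=\kappa\si+o(\si)$, which is \eqref{I_deriv}, and monotonicity follows.
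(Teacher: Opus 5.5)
Your target expansion for $\ml I$ is in fact correct, but your argument for it fails exactly at the point you flagged as the main obstacle.

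The input $\widehat V$ is a fixed function on $\dN_+$. By contrast, $\ml D_1^{\widehat\xi}=\frac{L-\bxi^*}{L-\widehat\xi}\p_{y_1}$ and $\p_{y_1}X_1=\frac{L-\widehat\xi}{L-\bxi^*}$ depend on $\ddot\xi^*$ through a dilation of the $y_1$-interval, not a translation. Two consequences follow:
\begin{itemize}
\item[(i)] $\p_{y_1}X_1=1-\frac{\ddot\xi^*}{L-\bxi^*}+O(\si)$, not $1+O(\si)$.
\item[(ii)] The coordinate correction $\widehat a_{11}(\p_{y_1}-\ml D_1^{\widehat\xi})\widehat V_1\approx-\frac{(1-\bM_+^2)\ddot\xi^*}{L-\bxi^*-\ddot\xi^*}\p_{y_1}\widehat V_1$ is not negligible. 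To leading order it is linear in the unknown $\ddot\xi^*$ with an $O(\si)$ coefficient, so its $\ddot\xi^*$-derivative is of order $\si$, not $\si^2$.
\end{itemize}
Each effect changes the slope of $\si^{-1}\ml I$ by an $O(1)$ amount, the same order as $\kappa$. So your claim that the error depends on $\ddot\xi^*$ only through shifted evaluations of $\Phi$ and $V_-$ is false, and the bound $|\ml E|\le C(\si+|\ddot\xi^*|)|\ddot\xi^*|$ does not follow. Adding both contributions, the slope of $\si^{-1}\ml I$ at $\ddot\xi^*=0$ is $\kappa$ plus
\begin{equation*}
\frac{1}{(L-\bxi^*)\si}\Bigl((1-\bM_+^2)\int_{\ml C}(\widehat V_1(L,y')-\widehat V_1(\bxi^*,y'))\,dy'-\si\frac{\bu_+}{\bc_+^2}\int_{\ml C}(\Phi(L,y')-\Phi(\bxi^*,y'))\,dy'\Bigr)+o(1).
\end{equation*}
With only $\|\widehat V\|_{1,\al;\dN_+}\le C\si$, which is all you use, this term is $O(1)$ and uncontrolled. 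The sign change at $\pm r_*\si$ is therefore not guaranteed.

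The missing step is the cancellation carried out in the paper.
\begin{itemize}
\item[(1)] Integrating the force term in $\ml F_0$ exactly produces the Jacobian factor $\frac{L-\bxi^*}{L-\bxi^*-\ddot\xi^*}$ in front of $\int_{\ml C}(\Phi(L,y')-\Phi(\bxi^*+\ddot\xi^*,y'))\,dy'$.
\item[(2)] In the stretching term, split $\widehat V_1=\du_{1+}+(\widehat V_1-\du_{1+})$.
\item[(3)] Use the linear identity $(1-\bM_+^2)\int_{\ml C}(\du_{1+}(L,y')-\du_{1+}(\bxi^*,y'))\,dy'=\si\frac{\bu_+}{\bc_+^2}\int_{\ml C}(\Phi(L,y')-\Phi(\bxi^*,y'))\,dy'$ to rewrite $\ml I^1+\ml I^2$ with the same factor.
\end{itemize}
The $\du_{1+}$ part and the force integral then cancel exactly against the rewritten boundary terms. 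This leaves $\ml I=\frac{1-\bM_+^2}{\brho\bu}\si\frac{L-\bxi^*}{L-\bxi^*-\ddot\xi^*}(F(\bxi^*+\ddot\xi^*)-F(\bxi^*))+R_I$.

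The leftover term $\frac{(1-\bM_+^2)\ddot\xi^*}{L-\bxi^*-\ddot\xi^*}\int_{\dN_+}\p_{y_1}(\widehat V_1-\du_{1+})\,d\y$ is bounded by $C\si^{3/2}|\ddot\xi^*|$, with $\ddot\xi^*$-derivative $O(\si^{3/2})$. This holds precisely because $(\widehat V,\widehat{\bm q})\in\mk K$ gives $\|\widehat V-\ml{\dU}_+\|_{1,\al;\dN_+}\le\si^{3/2}$. The argument hinges on this refined condition in the definition of $\mk K$, and your proposal never invokes it. Once you add it, your sign-change argument with $r_*>2C_0/|\kappa|$ goes through. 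So does the quantitative implicit function theorem, which also gives the derivative bound needed later in the contraction estimate.

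A smaller point: a contribution $CM^2\si^2$ cannot be absorbed into $C_0\si^2$ by shrinking $\si$. The $M$-independence of $C_0$ comes instead from the fact that only $C^{1,\al}$ norms of the input enter $\ml I(0)$, via \eqref{it_low_bound}.
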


\begin{proof}
Throughout this proof, the iteration input and prescribed data are
fixed. For $\ml I$ in \eqref{pos_fun}, denote the boundary and
source terms by
\begin{equation}\label{pos_parts}
\begin{split}
\ml I^1
&:=-\lrb{1-\bM_+^2}\int_{\ml C}\ddot g_1\,dy',\\
\ml I^2
&:=\lrb{1-\bM_+^2}\int_{\ml C}\ddot g_e\,dy',\\
\ml I^3
&:=-\int_{\dN_+}\ml F_0\,d\y.
\end{split}
\end{equation}
Then $\ml I=\ml I^1+\ml I^2+\ml I^3$. The estimates below hold
for $|\ddot\xi^*|\leq r_*\si$, with $r_*>1$ to be chosen and
$\si$ sufficiently small.
By \eqref{it_input_shock},
\begin{equation}\label{pos_front}
\begin{split}
\widehat\xi(y_2,y_3)-\bxi^*-\ddot\xi^*
=
\int_1^{y_2}\widehat q_2(s_2,1)\,ds_2
+\int_1^{y_3}\widehat q_3(y_2,s_3)\,ds_3.
\end{split}
\end{equation}
The bounds on the iteration input give
\begin{equation}\label{pos_front_cmp}
\|\widehat\xi-\bxi^*-\ddot\xi^*\|_{3,\al;\ml C}
\leq C\sum_{i=2}^3\|\widehat q_i\|_{2,\al;\ml C}
\leq C\si.
\end{equation}

To estimate $\ml I^1+\ml I^2$, the upstream data in
\eqref{it_shock_data} are expanded about the linear solution at
$x_1=\bxi^*+\ddot\xi^*$. The relevant differences are
\begin{equation*}
\begin{split}
V_{1-}(\widehat\xi(y'),y')
={}&
\bigl(
V_{1-}(\widehat\xi(y'),y')
-\du_{1-}(\widehat\xi(y'),y')
\bigr)\\
&+\bigl(
\du_{1-}(\widehat\xi(y'),y')
-\du_{1-}(\bxi^*+\ddot\xi^*,y')
\bigr)\\
&+\du_{1-}(\bxi^*+\ddot\xi^*,y'),\\
\Phi(\widehat\xi(y'),y')
={}&\Phi(\bxi^*+\ddot\xi^*,y')
+\bigl(
\Phi(\widehat\xi(y'),y')
-\Phi(\bxi^*+\ddot\xi^*,y')
\bigr).
\end{split}
\end{equation*}
The first difference in the first identity is $O(\si^2)$ by
\eqref{up_quad_est}. For the second difference,
the fundamental theorem of calculus gives
\begin{equation*}
\du_{1-}(\widehat\xi(y'),y')
-\du_{1-}(\bxi^*+\ddot\xi^*,y')
=\int_{\bxi^*+\ddot\xi^*}^{\widehat\xi(y')}
\p_{x_1}\du_{1-}(t,y')\,dt.
\end{equation*}
Consequently,
\begin{equation*}
\begin{aligned}
&\|\du_{1-}(\widehat\xi(\cdot),\cdot)
-\du_{1-}(\bxi^*+\ddot\xi^*,\cdot)\|_{1,\al;\ml C}
\leq
C\|\widehat\xi-\bxi^*-\ddot\xi^*\|_{1,\al;\ml C}
\|\du_{1-}\|_{2,\al;\ml N}
\leq C\si^2.
\end{aligned}
\end{equation*}
The potential term is estimated in the same way.
Thus the remainder in the shock boundary data is
\begin{equation}
\label{def_pos_r1}
\begin{split}
r_1(y')=&b_u^s\bigl(
V_{1-}(\widehat\xi(y'),y')
-\du_{1-}(\widehat\xi(y'),y')
\bigr)\\
&+b_u^s\bigl(
\du_{1-}(\widehat\xi(y'),y')
-\du_{1-}(\bxi^*+\ddot\xi^*,y')
\bigr)\\
&+\si b_\Phi^u\bigl(
\Phi(\widehat\xi(y'),y')
-\Phi(\bxi^*+\ddot\xi^*,y')
\bigr)+\ml R_1(y').
\end{split}
\end{equation}
Similarly, the entropy remainder is
\begin{equation*}
\begin{aligned}
r_4(y')={}&\ml R_4(y')
+b_s^s\bigl(
V_{1-}(\widehat\xi(y'),y')
-\du_{1-}(\widehat\xi(y'),y')\bigr)\\
&+b_s^s\int_{\bxi^*+\ddot\xi^*}^{\widehat\xi(y')}
\p_{x_1}\du_{1-}(t,y')\,dt
+\si b_\Phi^s\int_{\bxi^*+\ddot\xi^*}^{\widehat\xi(y')}
\p_{x_1}\Phi(t,y')\,dt.
\end{aligned}
\end{equation*}
The boundary formulas now become
\begin{equation}\label{pos_shock_exp}
\begin{split}
\ddot g_1
={}&b_u^s\du_{1-}(\bxi^*+\ddot\xi^*,y')
+\si b_\Phi^u\Phi(\bxi^*+\ddot\xi^*,y')
+r_1,\\
\ddot g_4
={}&b_s^s\du_{1-}(\bxi^*+\ddot\xi^*,y')
+\si b_\Phi^s\Phi(\bxi^*+\ddot\xi^*,y')
+r_4,
\end{split}
\end{equation}
where the estimates above and the quadratic bounds for
$\ml R_1,\ml R_4$ give
\begin{equation*}
\begin{aligned}
\|r_1\|_{1,\al;\ml C}+\|r_4\|_{1,\al;\ml C}
\leq C\Bigl(&\|\ml R_1\|_{1,\al;\ml C}
+\|\ml R_4\|_{1,\al;\ml C}
+\|V_- -\ml{\dU}_-\|_{1,\al;\ml N}\\
&+\|\widehat\xi-\bxi^*-\ddot\xi^*\|_{1,\al;\ml C}
\bigl(\|\ml{\dU}_-\|_{2,\al;\ml N}
+\si\|\Phi\|_{2,\al;\ml N}\bigr)\Bigr)
\leq C\si^2.
\end{aligned}
\end{equation*}

The exit data require the entropy at $y_1=L$. Integrating
\eqref{it_S_eq} with $y'$ fixed and using its boundary
condition yields
\begin{equation*}
V_4(L,y')=\ddot g_4(y')
-\int_{\bxi^*}^{L}\sum_{i=2}^3
\widehat b_i(t,y')\p_{y_i}V_4(t,y')\,dt.
\end{equation*}
Thus
\begin{equation*}
V_4(L,y')
=b_s^s\du_{1-}(\bxi^*+\ddot\xi^*,y')
+\si b_\Phi^s\Phi(\bxi^*+\ddot\xi^*,y')
+\widetilde r_4(y'),
\end{equation*}
where we define the remainder at $y_1=L$ by
\begin{equation*}
\widetilde r_4(y')
:=r_4(y')-\int_{\bxi^*}^{L}\sum_{i=2}^3
\widehat b_i(t,y')\p_{y_i}V_4(t,y')\,dt.
\end{equation*}
Consequently,
\begin{equation*}
\|\widetilde r_4\|_{1,\al;\ml C}
\leq\|r_4\|_{1,\al;\ml C}
+C\sum_{i=2}^3\|\widehat b_i\|_{1,\al;\dN_+}
\|V_4\|_{2,\al;\dN_+}
\leq C\si^2.
\end{equation*}
Substitution into the exit expansion gives
\begin{equation}\label{pos_exit_exp}
\begin{aligned}
\ddot g_e(y')={}&-\frac{\si}{\brho\bu}p_{ex}(y')
-\frac{\si}{\bu_+}\Phi(L,y')\\
&-\frac{\bp_+}{(\ga-1)\bS_+\brho\bu}
\bigl(b_s^s\du_{1-}(\bxi^*+\ddot\xi^*,y')
+\si b_\Phi^s\Phi(\bxi^*+\ddot\xi^*,y')\bigr)
+r_e(y'),
\end{aligned}
\end{equation}
where
\begin{equation*}
r_e(y'):=\ml R_e(y')
-\frac{\bp_+}{(\ga-1)\bS_+\brho\bu}\widetilde r_4(y').
\end{equation*}
Here $\ml R_e$ is the remainder in \eqref{it_exit_data}. Hence
\begin{equation*}
\|r_e\|_{1,\al;\ml C}
\leq \|\ml R_e\|_{1,\al;\ml C}
+C\|\widetilde r_4\|_{1,\al;\ml C}
\leq C\si^2.
\end{equation*}

Substituting \eqref{pos_shock_exp} and
\eqref{pos_exit_exp} into the first two terms in
\eqref{pos_parts}, we first have
\begin{equation}\label{pos_I12_raw}
\begin{split}
\ml I^1+\ml I^2
=&(1-\bM_+^2)\int_{\ml C}
\Bigg\{
-\lrb{
b_u^s+\frac{\bp_+b_s^s}
{(\ga-1)\bS_+\brho\bu}
}
\du_{1-}(\bxi^*+\ddot\xi^*,y')\\
&
-\si\lrb{
b_\Phi^u+\frac{\bp_+b_\Phi^s}
{(\ga-1)\bS_+\brho\bu}
}
\Phi(\bxi^*+\ddot\xi^*,y')\\
&
-\frac{\si}{\brho\bu}p_{ex}(y')
-\frac{\si}{\bu_+}\Phi(L,y')
\Bigg\}\,dy'
+R_{12}^{(0)},
\end{split}
\end{equation}
where
\begin{equation*}
R_{12}^{(0)}
=\lrb{1-\bM_+^2}\int_{\ml C}(r_e-r_1)\,dy',
\qquad
|R_{12}^{(0)}|\leq C\si^2.
\end{equation*}
The linear upstream and downstream equations imply, respectively,
\begin{equation}\label{pos_lin_int}
\begin{split}
&(1-\bM_-^2)\frac{d}{dt}
\int_{\ml C}\du_{1-}(t,y')\,dy'
=\si\frac{\bu_-}{\bc_-^2}\frac{d}{dt}
\int_{\ml C}\Phi(t,y')\,dy',\\
&(1-\bM_+^2)\int_{\ml C}
\bigl(
\du_{1+}(L,y')-\du_{1+}(\bxi^*,y')
\bigr)\,dy'
=\si\frac{\bu_+}{\bc_+^2}
\int_{\ml C}
\bigl(
\Phi(L,y')-\Phi(\bxi^*,y')
\bigr)\,dy'.
\end{split}
\end{equation}

Integrating the first identity in \eqref{pos_lin_int} from
$0$ to $\bxi^*+\ddot\xi^*$ and using
$\du_{1-}(0,y')=\Phi(0,y')=0$, we obtain
\begin{equation}\label{pos_up_int}
\int_{\ml C}
\du_{1-}(\bxi^*+\ddot\xi^*,y')\,dy'
=
\si\frac{\bM_-^2}
{\bu_-(1-\bM_-^2)}
\int_{\ml C}
\Phi(\bxi^*+\ddot\xi^*,y')\,dy'.
\end{equation}

The two coefficients in \eqref{pos_I12_raw} can be simplified
using \eqref{def_RH_coefs} and \eqref{def_bue}:
\begin{equation*}
\frac{\bp_+}
{(\ga-1)\bS_+\brho\bu} b_s^s
=
\frac{(\bM_-^2-1)[\bp]}
{\brho\bu\bu_-}
=-b_u^{ex}.
\end{equation*}
Similarly, the definition of $b_\Phi^s$ gives
\begin{equation*}
\frac{\bp_+}{(\ga-1)\bS_+\brho\bu}b_\Phi^s
=\frac{b_1-\bu_+b_0}{\brho\bu}.
\end{equation*}
Consequently,
\begin{equation*}
\begin{split}
b_u^s+\frac{\bp_+}{(\ga-1)\bS_+\brho\bu}b_s^s
&= b_u^s-b_u^{ex},\\
b_\Phi^u+\frac{\bp_+}{(\ga-1)\bS_+\brho\bu}b_\Phi^s
&= b_\Phi^u+\frac{b_1-\bu_+b_0}{\brho\bu}.
\end{split}
\end{equation*}

Substituting these identities into \eqref{pos_I12_raw}, we have
\begin{equation*}
\begin{split}
\ml I^1+\ml I^2
={}&(1-\bM_+^2)(b_u^{ex}-b_u^s)
\int_{\ml C}
\du_{1-}(\bxi^*+\ddot\xi^*,y')\,dy'\\
&-\si(1-\bM_+^2)
\lrb{
b_\Phi^u+\frac{b_1-\bu_+b_0}{\brho\bu}
}
\int_{\ml C}
\Phi(\bxi^*+\ddot\xi^*,y')\,dy'\\
&-\si\frac{1-\bM_+^2}{\brho\bu}
\int_{\ml C}p_{ex}(y')\,dy'\\
&-\si\frac{1-\bM_+^2}{\bu_+}
\int_{\ml C}\Phi(L,y')\,dy'
+R_{12}^{(0)}.
\end{split}
\end{equation*}
Applying \eqref{pos_up_int} to the first term yields
\begin{equation}\label{pos_I12_up}
\begin{split}
\ml I^1+\ml I^2
={}&\si(1-\bM_+^2)
\left\{
\frac{b_u^{ex}-b_u^s}{\bu_-}
\frac{\bM_-^2}{1-\bM_-^2}
-\frac{b_1-\bu_+b_0}{\brho\bu}
-b_\Phi^u
\right\}
\int_{\ml C} \Phi(\bxi^*+\ddot\xi^*,y')\,dy'\\
&-\si\frac{1-\bM_+^2}{\brho\bu}
\int_{\ml C}p_{ex}(y')\,dy'
-\si\frac{1-\bM_+^2}{\bu_+}
\int_{\ml C}\Phi(L,y')\,dy'
+R_{12}^{(0)}.
\end{split}
\end{equation}

It follows from \eqref{def_barK} that
\begin{equation*}
\begin{split}
\frac{\bar K}{\brho \bu}
-\frac{1}{\bu_+}\frac{\bM_+^2}{1-\bM_+^2}
={}&\frac{b_u^{ex}-b_u^s}{\bu_-}
\frac{\bM_-^2}{1-\bM_-^2}
-\frac{b_1-\bu_+b_0}{\brho\bu}
-b_\Phi^u.
\end{split}
\end{equation*}
Substituting this identity into \eqref{pos_I12_up}, we obtain
\begin{equation}\label{pos_I12_K}
\begin{split}
\ml I^1+\ml I^2
={}&
\si\frac{1-\bM_+^2}{\brho\bu}
\left\{
\bar K\int_{\ml C}
\Phi(\bxi^*+\ddot\xi^*,y')\,dy'
-\int_{\ml C}p_{ex}(y')\,dy'
\right\}\\
&-\si\frac{\bM_+^2}{\bu_+}
\int_{\ml C}
\Phi(\bxi^*+\ddot\xi^*,y')\,dy'
-\si\frac{1-\bM_+^2}{\bu_+}
\int_{\ml C}\Phi(L,y')\,dy'
+R_{12}^{(0)}.
\end{split}
\end{equation}

By \eqref{pos_F} and the solvability condition,
\begin{equation*}
F(\bxi^*)=\int_{\ml C}p_{ex}(y')\,dy'.
\end{equation*}
Therefore,
\begin{equation}\label{pos_F_shift}
\begin{split}
&\bar K\int_{\ml C}
\Phi(\bxi^*+\ddot\xi^*,y')\,dy'
-\int_{\ml C}p_{ex}(y')\,dy'\\
=&
F(\bxi^*+\ddot\xi^*)-F(\bxi^*)
+\frac{\brho_+}{1-\bM_+^2}
\int_{\ml C}\Phi(L,y')\,dy'.
\end{split}
\end{equation}
The last term on the right-hand side of \eqref{pos_F_shift}
contributes
\begin{equation*}
\si\frac{1-\bM_+^2}{\brho\bu}
\frac{\brho_+}{1-\bM_+^2}
\int_{\ml C}\Phi(L,y')\,dy'
=
\frac{\si}{\bu_+}
\int_{\ml C}\Phi(L,y')\,dy'.
\end{equation*}

Combining this term with the third term in \eqref{pos_I12_K}, we obtain
\begin{equation*}
\begin{split}
&\frac{\si}{\bu_+}
\int_{\ml C}\Phi(L,y')\,dy'
-\si\frac{1-\bM_+^2}{\bu_+}
\int_{\ml C}\Phi(L,y')\,dy'\\
=&
\si\frac{\bM_+^2}{\bu_+}
\int_{\ml C}\Phi(L,y')\,dy'
=
\si\frac{\bu_+}{\bc_+^2}
\int_{\ml C}\Phi(L,y')\,dy'.
\end{split}
\end{equation*}

Therefore,
\begin{equation}\label{pos_I12_basic}
\begin{split}
\ml I^1+\ml I^2
={}&\frac{1-\bM_+^2}{\brho\bu}\si
\bigl(F(\bxi^*+\ddot\xi^*)-F(\bxi^*)\bigr)\\
&+\si\frac{\bu_+}{\bc_+^2}
\int_{\ml C}
\bigl(
\Phi(L,y')-\Phi(\bxi^*+\ddot\xi^*,y')
\bigr)\,dy'
+R_{12}^{(0)}.
\end{split}
\end{equation}
To combine these boundary terms with the source term $\ml I^3$,
use the second identity in \eqref{pos_lin_int} to rewrite
the last expression as
\begin{equation}\label{pos_I12}
\begin{split}
\ml I^1+\ml I^2
={}&\frac{1-\bM_+^2}{\brho\bu}\si
\frac{L-\bxi^*}{L-\bxi^*-\ddot\xi^*}
\bigl(
F(\bxi^*+\ddot\xi^*)-F(\bxi^*)
\bigr)\\
&-\frac{(1-\bM_+^2)\ddot\xi^*}
{L-\bxi^*-\ddot\xi^*}
\int_{\ml C}
\bigl(
\du_{1+}(L,y')-\du_{1+}(\bxi^*,y')
\bigr)\,dy'\\
&+\si\frac{\bu_+}{\bc_+^2}
\frac{L-\bxi^*}{L-\bxi^*-\ddot\xi^*}
\int_{\ml C}
\bigl(
\Phi(L,y')-
\Phi(\bxi^*+\ddot\xi^*,y')
\bigr)\,dy'
+R_{12},
\end{split}
\end{equation}
where
\begin{equation}\label{def_pos_R12}
R_{12}:={}R_{12}^{(0)}
-\frac{\si\ddot\xi^*}
{L-\bxi^*-\ddot\xi^*}
\left(
\frac{1-\bM_+^2}{\brho\bu}\bar K
-\frac{\bu_+}{\bc_+^2}
\right)
\int_{\ml C}
\bigl(
\Phi(\bxi^*+\ddot\xi^*,y')
-\Phi(\bxi^*,y')
\bigr)\,dy'.
\end{equation}
For $|\ddot\xi^*|\leq r_*\si$, the second term in
\eqref{def_pos_R12} is bounded by
$C\si|\ddot\xi^*|^2$. Hence
\begin{equation*}
|R_{12}|\leq C\si^2.
\end{equation*}
To estimate $\ml I^3$, expand \eqref{it_F0_split} using
\eqref{pos_front} and collect the terms containing only
$\ddot\xi^*$. By the coordinate change,
\begin{equation*}
\ml D_i^{\widehat\xi}\bigl(\Phi(X_{\widehat\xi}(\y))\bigr)
=(\p_{x_i}\Phi)(X_{\widehat\xi}(\y)),\qquad i=1,2,3.
\end{equation*}
Consequently,
\begin{equation}\label{pos_source_exp}
\begin{split}
\ml F_0
={}&-\frac{(1-\bM_+^2)\ddot\xi^*}
{L-\bxi^*-\ddot\xi^*}
\p_{y_1}\du_{1+}
+\si\frac{\bu_+}{\bc_+^2}
(\p_{x_1}\Phi)\left(
\bxi^*+\ddot\xi^*
+\frac{L-\bxi^*-\ddot\xi^*}{L-\bxi^*}
(y_1-\bxi^*),y'
\right)\\
&-\frac{(1-\bM_+^2)\ddot\xi^*}
{L-\bxi^*-\ddot\xi^*}
\p_{y_1}\lrb{\widehat V_1-\du_{1+}}
+R_F.
\end{split}
\end{equation}
Here
\begin{equation}\label{def_pos_RF}
\begin{aligned}
R_F:={}&
\sum_{i,j=1}^3(\bar a_{ij}-\widehat a_{ij})
\p_{y_i}\widehat V_j\\
&+
\sum_{i,j=1}^3\widehat a_{ij}
(\p_{y_i}-\ml D_i^{\widehat\xi})\widehat V_j
+\frac{(1-\bM_+^2)\ddot\xi^*}
{L-\bxi^*-\ddot\xi^*}\p_{y_1}\widehat V_1\\
&+
\frac{\si}{\bc_+^2}\left\{
\sum_{i=1}^3\widehat u_i
\ml D_i^{\widehat\xi}
\bigl(\Phi(X_{\widehat\xi}(\y))\bigr)
-\bu_+(\p_{x_1}\Phi)\left(
\bxi^*+\ddot\xi^*
+\frac{L-\bxi^*-\ddot\xi^*}{L-\bxi^*}
(y_1-\bxi^*),y'
\right)
\right\}.
\end{aligned}
\end{equation}
From these expressions we obtain
\begin{equation*}
\begin{aligned}
\|R_F\|_{0,\al;\dN_+}
\leq& C\Biggl\{\sum_{i,j=1}^3
\|\bar a_{ij}-\widehat a_{ij}\|_{0,\al;\dN_+}
\|\widehat V\|_{1,\al;\dN_+}\\
&+\Bigl(
\|\widehat\xi-\bxi^*-\ddot\xi^*\|_{1,\al;\ml C}
+\sum_{i=2}^3\|\widehat q_i\|_{0,\al;\ml C}\\
&+|\ddot\xi^*|\sum_{i,j=1}^3
\|\bar a_{ij}-\widehat a_{ij}\|_{0,\al;\dN_+}
\Bigr)\|\widehat V\|_{1,\al;\dN_+}\\
&+\si\Bigl(
\|\widehat V\|_{0,\al;\dN_+}
+\|\widehat\xi-\bxi^*-\ddot\xi^*\|_{0,\al;\ml C}
\Bigr)\|\Phi\|_{2,\al;\ml N}\Biggr\}\\
\leq &C\si^2.
\end{aligned}
\end{equation*}
For the second line of \eqref{def_pos_RF}, we used
\begin{equation*}
\frac{L-\bxi^*}{L-\widehat\xi(y')}
-\frac{L-\bxi^*}{L-\bxi^*-\ddot\xi^*}
=\frac{(L-\bxi^*)
(\widehat\xi(y')-\bxi^*-\ddot\xi^*)}
{(L-\widehat\xi(y'))(L-\bxi^*-\ddot\xi^*)}.
\end{equation*}
For the potential terms in \eqref{def_pos_RF}, the coordinate
difference is
\begin{equation*}
\begin{aligned}
X_1(\y;\widehat\xi)
-\left(
\bxi^*+\ddot\xi^*
+\frac{L-\bxi^*-\ddot\xi^*}{L-\bxi^*}(y_1-\bxi^*)
\right)
=
\frac{L-y_1}{L-\bxi^*}
\bigl(\widehat\xi(y')-\bxi^*-\ddot\xi^*\bigr).
\end{aligned}
\end{equation*}

It follows that
\begin{equation*}
\left|\int_{\dN_+}R_F\,d\y\right|
\leq C\|R_F\|_{0,\al;\dN_+}
\leq C\si^2.
\end{equation*}
By \eqref{it_space},
\begin{equation}\label{pos_key_error}
\left|
\frac{(1-\bM_+^2)\ddot\xi^*}
{L-\bxi^*-\ddot\xi^*}
\int_{\dN_+}
\p_{y_1}\lrb{\widehat V_1-\du_{1+}}\,d\y
\right|
\leq C\si^{3/2}|\ddot\xi^*|.
\end{equation}
Integrating the first two terms in
\eqref{pos_source_exp} with respect to $y_1$, we find
\begin{equation}\label{pos_I3}
\begin{split}
\ml I^3
={}&\frac{(1-\bM_+^2)\ddot\xi^*}
{L-\bxi^*-\ddot\xi^*}
\int_{\ml C}
\bigl(
\du_{1+}(L,y')-\du_{1+}(\bxi^*,y')
\bigr)\,dy'\\
&-\si\frac{\bu_+}{\bc_+^2}
\frac{L-\bxi^*}{L-\bxi^*-\ddot\xi^*}
\int_{\ml C}
\bigl(
\Phi(L,y')-
\Phi(\bxi^*+\ddot\xi^*,y')
\bigr)\,dy'\\
&+\frac{(1-\bM_+^2)\ddot\xi^*}
{L-\bxi^*-\ddot\xi^*}
\int_{\dN_+}\p_{y_1}(\widehat V_1-\du_{1+})\,d\y
-\int_{\dN_+}R_F\,d\y.
\end{split}
\end{equation}

Adding \eqref{pos_I12} and \eqref{pos_I3}, the second and third
lines in \eqref{pos_I12} cancel. Thus
\begin{equation}\label{pos_exp}
\ml I
=\frac{1-\bM_+^2}{\brho\bu}\si
\frac{L-\bxi^*}{L-\bxi^*-\ddot\xi^*}
\bigl(F(\bxi^*+\ddot\xi^*)-F(\bxi^*)\bigr)+R_I,
\end{equation}
where we define
\begin{equation}\label{def_pos_RI}
R_I:=R_{12}
+\frac{(1-\bM_+^2)\ddot\xi^*}
{L-\bxi^*-\ddot\xi^*}
\int_{\dN_+}\p_{y_1}(\widehat V_1-\du_{1+})\,d\y
-\int_{\dN_+}R_F\,d\y.
\end{equation}
The preceding estimates give
\begin{equation}\label{est_pos_rem}
|R_I|
\leq C\bigl(\si^2+\si^{3/2}|\ddot\xi^*|
+\si|\ddot\xi^*|^2\bigr).
\end{equation}
The dependence on the displacement can be estimated by differentiating
the preceding formulas. In particular, differentiating the entropy
problem \eqref{it_S_eq} with \eqref{it_S_data} gives
\begin{equation*}
\begin{aligned}
&\p_{y_1}\p_{\ddot\xi^*}V_4
+\sum_{i=2}^3\widehat b_i\p_{y_i}\p_{\ddot\xi^*}V_4
=-\sum_{i=2}^3(\p_{\ddot\xi^*}\widehat b_i)\p_{y_i}V_4
\quad\text{in }\dN_+,\\
&\p_{\ddot\xi^*}V_4(\bxi^*,y')
=\p_{\ddot\xi^*}\ddot g_4(y').
\end{aligned}
\end{equation*}
The coefficient and boundary formulas, together with the transport
estimate, yield
\begin{equation*}
\sum_{i=2}^3
\|\p_{\ddot\xi^*}\widehat b_i\|_{1,\al;\dN_+}
+\|\p_{\ddot\xi^*}\ddot g_4\|_{1,\al;\ml C}
+\|\p_{\ddot\xi^*}V_4\|_{1,\al;\dN_+}
\leq C\si.
\end{equation*}
By \eqref{pos_front}, $\p_{\ddot\xi^*}\widehat\xi=1$. The formulas for $r_1$, $r_4$,
$\widetilde r_4$ and $r_e$, and \eqref{def_pos_RF}, then imply
\begin{equation*}
\|\p_{\ddot\xi^*}r_1\|_{0,\al;\ml C}
+\|\p_{\ddot\xi^*}r_e\|_{0,\al;\ml C}
+\|\p_{\ddot\xi^*}R_F\|_{0,\al;\dN_+}
\leq C\si^2.
\end{equation*}
Using these bounds in \eqref{def_pos_R12} and \eqref{def_pos_RI},
and applying \eqref{it_space} to the term in \eqref{pos_key_error},
we obtain
\begin{equation*}
\left|\p_{\ddot\xi^*}R_I\right|
\leq C\bigl(\si^{3/2}+\si|\ddot\xi^*|\bigr).
\end{equation*}

The force condition \eqref{lin_pos_mono} gives
\begin{equation*}
F'(\bxi^*)
=\bar K\int_{\ml C}\p_{x_1}\Phi(\bxi^*,y')\,dy'\neq0.
\end{equation*}
Thus \eqref{pos_exp} yields
\begin{equation}\label{pos_deriv}
\begin{aligned}
\p_{\ddot\xi^*}\ml I
&=\frac{1-\bM_+^2}{\brho\bu}\si F'(\bxi^*)
+O\bigl(\si|\ddot\xi^*|+\si^{3/2}\bigr),\\
\left|\p_{\ddot\xi^*}\ml I\right|&\geq c_*\si,
\end{aligned}
\end{equation}
for sufficiently small $\si$, where
\begin{equation*}
c_*:=\frac{1-\bM_+^2}{2\brho\bu}|F'(\bxi^*)|>0.
\end{equation*}
At $\ddot\xi^*=0$, \eqref{it_low_bound} and the entropy transport
estimate give
\begin{equation*}
\|V_4\|_{1,\al;\dN_+}\leq C\si,
\qquad
|\ml I(0)|\leq C_0\si^2,
\end{equation*}
with $C_0$ independent of $M$. Choose
$r_*>\max\{1,2C_0/c_*\}$, independently of $M$, and then take
$\si_1$ sufficiently small. The implicit function theorem,
applied in its quantitative form to $\si^{-1}\ml I=0$, gives
\begin{equation*}
|\ddot\xi^*|\leq\frac{2C_0}{c_*}\si<r_*\si.
\end{equation*}
This proves \eqref{est_it_pos}.
\end{proof}

Evaluate the data at the displacement $\ddot\xi^*$ given by
Lemma~\ref{lem_it_pos}. They then satisfy the integral
solvability condition \eqref{dc_compat}.
The required wall compatibility conditions follow from
\eqref{it_shock_wall_data} and \eqref{it_exit_wall} for the
boundary data,
\eqref{it_G_wall} for the curl data, and
\eqref{it_F0_wall} for the source term.
Therefore, Lemma~\ref{lem_it_dc} gives a unique solution of
\eqref{it_dc} with the boundary conditions \eqref{it_vel_bc}
and \eqref{it_Pi_bc}, together with the estimate
\eqref{est_it_dc}.

The coefficient differences in \eqref{it_F0_split} vanish at the background
state, and the differences between $\ml D_i^{\widehat\xi}$ and
$\p_{y_i}$ are controlled by $\widehat\xi-\bxi^*$. Hence
\begin{equation*}
\|\ml F_0\|_{1,\al;\dN_+}
\leq C\Bigl\{\si+
\bigl(\|\widehat V\|_{1,\al;\dN_+}
+\|\widehat\xi-\bxi^*\|_{2,\al;\ml C}+\si\bigr)
\|\widehat V\|_{2,\al;\dN_+}\Bigr\}
\leq C\si.
\end{equation*}
Substituting \eqref{it_om23} into \eqref{it_G123} gives
\begin{equation*}
\begin{aligned}
\sum_{i=1}^3\|\ml G_i\|_{1,\al;\dN_+}
\leq& C\Bigl\{
(1+\|\widehat V\|_{1,\al;\dN_+})
\|\om_1\|_{1,\al;\dN_+}
+\|V_4\|_{2,\al;\dN_+}\\
&+\|\widehat\xi-\bxi^*\|_{2,\al;\ml C}
\|\widehat V\|_{2,\al;\dN_+}\Bigr\}
\leq C\si.
\end{aligned}
\end{equation*}
The boundary functions in \eqref{it_shock_data} and
\eqref{it_exit_data} satisfy
\begin{equation*}
\begin{aligned}
\|\ddot g_1\|_{2,\al;\ml C}
+\|\ddot g_e\|_{2,\al;\ml C}
\quad\leq& C\Bigl\{
\|V_-\|_{2,\al;\ml N}+\|V_4\|_{2,\al;\dN_+}+\si\\
&+\bigl(\|\widehat V\|_{2,\al;\dN_+}
+\|V_-\|_{2,\al;\ml N}
+\|V_4\|_{2,\al;\dN_+}+\si\bigr)^2\Bigr\}\\
\leq& C\si.
\end{aligned}
\end{equation*}
The constants here may depend on $M$, which will be fixed below.

For comparison, \eqref{lin_down_eq} and \eqref{lin_down_const}
give the data of the linear solution:
\begin{equation*}
\begin{aligned}
\dot F_0(\y)&=\si\frac{\bu_+}{\bc_+^2}\p_{y_1}\Phi(\y),
&\dot\om_{1+}&=0,\\
\dot\om_{2+}&=-\frac{\brho_+^{\ga-1}}{(\ga-1)\bu_+}
\p_{y_3}\dS_+,
&\dot\om_{3+}&=\frac{\brho_+^{\ga-1}}{(\ga-1)\bu_+}
\p_{y_2}\dS_+.
\end{aligned}
\end{equation*}
For the forcing term, the chain rule gives
\begin{equation*}
\ml D_i^{\widehat\xi}\bigl(\Phi(X_1(\y;\widehat\xi),y')\bigr)
=(\p_{x_i}\Phi)(X_1(\y;\widehat\xi),y'),
\qquad i=1,2,3.
\end{equation*}
Consequently, \eqref{it_F0_split} yields
\begin{equation*}
\begin{aligned}
\|\ml F_0-\dot F_0\|_{0,\al;\dN_+}
\leq& C\Bigl\{
\bigl(\|\widehat V\|_{1,\al;\dN_+}
+\|\widehat\xi-\bxi^*\|_{1,\al;\ml C}+\si\bigr)
\|\widehat V\|_{1,\al;\dN_+}\\
&+\si\bigl(\|\widehat V\|_{0,\al;\dN_+}
+\|\widehat\xi-\bxi^*\|_{0,\al;\ml C}\bigr)\Bigr\}
\leq C\si^2.
\end{aligned}
\end{equation*}
Since $\p_{y_1}\dS_+=0$,
\begin{equation*}
\ml D_i^{\widehat\xi}\dS_+=\p_{y_i}\dS_+,
\qquad i=2,3.
\end{equation*}
The last two equations in \eqref{it_G123} therefore give
\begin{equation*}
\begin{aligned}
\ml G_2-\dot\om_{2+}
={}&\frac{\widehat u_2}{\widehat u_1}\om_1
-\left(\frac{\rho^{\ga-1}}{(\ga-1)\widehat u_1}
-\frac{\brho_+^{\ga-1}}{(\ga-1)\bu_+}\right)
\ml D_3^{\widehat\xi}V_4\\
&-\frac{\brho_+^{\ga-1}}{(\ga-1)\bu_+}
\ml D_3^{\widehat\xi}(V_4-\dS_+)
+\frac{L-y_1}{L-\widehat\xi(y')}
\widehat q_3\p_{y_1}\widehat V_1\\
&+\left(\frac{L-\bxi^*}{L-\widehat\xi(y')}-1\right)
\p_{y_1}\widehat V_3,\\
\ml G_3-\dot\om_{3+}
={}&\frac{\widehat u_3}{\widehat u_1}\om_1
+\left(\frac{\rho^{\ga-1}}{(\ga-1)\widehat u_1}
-\frac{\brho_+^{\ga-1}}{(\ga-1)\bu_+}\right)
\ml D_2^{\widehat\xi}V_4\\
&+\frac{\brho_+^{\ga-1}}{(\ga-1)\bu_+}
\ml D_2^{\widehat\xi}(V_4-\dS_+)
-\frac{L-y_1}{L-\widehat\xi(y')}
\widehat q_2\p_{y_1}\widehat V_1\\
&-\left(\frac{L-\bxi^*}{L-\widehat\xi(y')}-1\right)
\p_{y_1}\widehat V_2.
\end{aligned}
\end{equation*}
Together with the first equation in \eqref{it_G123}, these
identities imply
\begin{equation*}
\begin{aligned}
\sum_{i=1}^3\|\ml G_i-\dot\om_{i+}\|_{0,\al;\dN_+}
\leq& C\Bigl\{
(1+\|\widehat V\|_{0,\al;\dN_+})\|\om_1\|_{0,\al;\dN_+}
+\|V_4-\dS_+\|_{1,\al;\dN_+}\\
&+
\bigl(\|\widehat V\|_{0,\al;\dN_+}
+\|V_4\|_{0,\al;\dN_+}+\si\bigr)
\|V_4\|_{1,\al;\dN_+}\\
&+
\|\widehat\xi-\bxi^*\|_{1,\al;\ml C}
\|\widehat V\|_{1,\al;\dN_+}\Bigr\}
\leq C\si^2.
\end{aligned}
\end{equation*}
The last inequality follows from \eqref{est_it_S}, the estimate
for $\om_1$, and \eqref{est_it_pos}.

The linear terms of $g_1$ and $g_e$ agree with the boundary data
of $\du_{1+}$. Subtracting these terms in
\eqref{it_shock_data} and \eqref{it_exit_data} gives
\begin{equation*}
\begin{aligned}
&\|\ddot g_1-\dot g_1\|_{1,\al;\ml C}
+\|\ddot g_e-\dot g_e\|_{1,\al;\ml C}\\
\leq& C\Bigl\{
\|V_- -\ml{\dU}_-\|_{1,\al;\ml N}
+\si\|\widehat\xi-\bxi^*\|_{1,\al;\ml C}
+\|V_4-\dS_+\|_{1,\al;\dN_+}\\
&+
\bigl(\|\widehat V\|_{1,\al;\dN_+}
+\|V_-\|_{1,\al;\ml N}
+\|V_4\|_{1,\al;\dN_+}+\si\bigr)^2\Bigr\}\\
\leq& C\si^2.
\end{aligned}
\end{equation*}
Thus the complete data estimates are
\begin{equation}\label{it_data_est}
\begin{aligned}
&\|\ml F_0\|_{1,\al;\dN_+}
+\sum_{i=1}^3\|\ml G_i\|_{1,\al;\dN_+}
+\|\ddot g_1\|_{2,\al;\ml C}
+\|\ddot g_e\|_{2,\al;\ml C}
\leq C\si,\\
&\|\ml F_0-\dot F_0\|_{0,\al;\dN_+}
+\sum_{i=1}^3\|\ml G_i-\dot\om_{i+}\|_{0,\al;\dN_+}+
\|\ddot g_1-\dot g_1\|_{1,\al;\ml C}
+\|\ddot g_e-\dot g_e\|_{1,\al;\ml C}
\leq C\si^2.
\end{aligned}
\end{equation}

Subtracting the linear velocity equations from
\eqref{it_dc} gives
\begin{equation}\label{vel_lin_diff}
\begin{cases}
(1-\bM_+^2)\p_{y_1}(V_1-\du_{1+})
+\displaystyle\sum_{i=2}^3\p_{y_i}(V_i-\du_{i+})
=\ml F_0-\dot F_0,\\
\p_{y_2}(V_3-\du_{3+})-\p_{y_3}(V_2-\du_{2+})
+\p_{y_1}\Pi=\ml G_1-\dot\om_{1+},\\
\p_{y_3}(V_1-\du_{1+})-\p_{y_1}(V_3-\du_{3+})
+\p_{y_2}\Pi=\ml G_2-\dot\om_{2+},\\
\p_{y_1}(V_2-\du_{2+})-\p_{y_2}(V_1-\du_{1+})
+\p_{y_3}\Pi=\ml G_3-\dot\om_{3+},
\end{cases}
\quad\text{in }\dN_+,
\end{equation}
endowed with
\begin{equation*}
\begin{cases}
(V_1-\du_{1+})(\bxi^*,y')
=\ddot g_1(y')-\dot g_1(y'),\\
(V_1-\du_{1+})(L,y')
=\ddot g_e(y')-\dot g_e(y'),\\
V_2-\du_{2+}=0\quad\text{on }\Ga_2^\pm,
\qquad V_3-\du_{3+}=0\quad\text{on }\Ga_3^\pm,
\end{cases}
\end{equation*}
and the conditions \eqref{it_Pi_bc} for $\Pi$. The
compatibility condition follows by subtracting that of the linear
solution from \eqref{pos_eq}. Applying
Lemma~\ref{lem_it_dc} first to \eqref{it_dc} with
$k=1$, and then to \eqref{vel_lin_diff} with $k=0$,
we obtain
\begin{equation}\label{est_it_vel}
\begin{aligned}
&\sum_{i=1}^3\|V_i\|_{2,\al;\dN_+}
+\|\Pi\|_{2,\al;\dN_+}\\
\leq& C\Bigl(
\|\ml F_0\|_{1,\al;\dN_+}
+\sum_{i=1}^3\|\ml G_i\|_{1,\al;\dN_+}
+\sum_{i=1,e}^{}\|\ddot g_i\|_{2,\al;\ml C} \Bigr)\\
\leq& C\si,\\
&\sum_{i=1}^3\|V_i-\du_{i+}\|_{1,\al;\dN_+}
+\|\Pi\|_{1,\al;\dN_+}\\
\leq&
C\Bigl(
\|\ml F_0-\dot F_0\|_{0,\al;\dN_+}
+\sum_{i=1}^3\|\ml G_i-\dot\om_{i+}\|_{0,\al;\dN_+}+
\sum_{i=1,e}^{}\|\ddot g_i-\dot g_i\|_{1,\al;\ml C}
\Bigr)\\
\leq& C\si^2.
\end{aligned}
\end{equation}

\medskip
\noindent
\textit{Step~4. Recovery of the shock.}
The velocity just obtained determines the updated shock derivatives
through \eqref{shock_slopes}:
\begin{equation}\label{it_slopes}
q_i(y')=a_0\bigl(V_i(\bxi^*,y')
-V_{i-}(\widehat\xi(y'),y')\bigr)+\ml R_i(y'),
\qquad i=2,3.
\end{equation}
Here $\ml R_i(y')$ denotes the same evaluation at the iteration
input as in \eqref{shock_slopes}. By the first curl equation in
\eqref{it_dc}, \eqref{it_G1_shock}, and
$\p_{y_1}\Pi=0$ on $y_1=\bxi^*$,
\begin{equation*}
\begin{aligned}
\p_{y_2}q_3-\p_{y_3}q_2
={}&a_0\ml G_1(\bxi^*,y')
-a_0\p_{y_2}\bigl(V_{3-}(\widehat\xi(y'),y')\bigr)\\
&+a_0\p_{y_3}\bigl(V_{2-}(\widehat\xi(y'),y')\bigr)
+\p_{y_2}\ml R_3(y')-\p_{y_3}\ml R_2(y')=0.
\end{aligned}
\end{equation*}
Thus
\begin{equation}\label{it_slope_curl}
\p_{y_2}q_3=\p_{y_3}q_2\qquad\text{in }\ml C.
\end{equation}
The output shock is
\begin{equation}\label{it_output_front}
\xi(y_2,y_3)=\bxi^*+\ddot\xi^*
+\int_1^{y_2}q_2(s,1)\,ds
+\int_1^{y_3}q_3(y_2,s)\,ds.
\end{equation}
Indeed, differentiating this formula and using
\eqref{it_slope_curl} gives
\begin{equation*}
\p_{y_2}\xi=q_2,\qquad \p_{y_3}\xi=q_3,
\qquad \xi(1,1)=\bxi^*+\ddot\xi^*.
\end{equation*}
Since $q_2,q_3\in C^{2,\al}$, these derivative identities give
$\xi\in C^{3,\al}$. The wall conditions for the velocity, the upstream
solution, and the remainders give
\begin{equation*}
q_2=\p_{y_2}^2q_2=0\quad\text{on }y_2=\pm1,
\qquad q_3=\p_{y_3}^2q_3=0\quad\text{on }y_3=\pm1.
\end{equation*}
Hence the updated derivatives satisfy \eqref{it_shape_wall}.

The preceding steps define the iteration map
\begin{equation}\label{iter_map}
\ml T:(\widehat V,\widehat{\bm q})\longmapsto(V,\bm q).
\end{equation}

\begin{proposition}\label{prop_iter_map}
There exists $M>1$, and for this $M$ there exists $\si_2>0$, such
that, for $0<\si\leq\si_2$,
\begin{equation}\label{iter_self_map}
\ml T(\mk K)\subset\mk K.
\end{equation}
Moreover,
\begin{equation}\label{est_iter_output}
\begin{aligned}
&\|V\|_{2,\al;\dN_+}
+\sum_{i=2}^3\|q_i\|_{2,\al;\ml C}+|\ddot\xi^*|\leq C\si,\\
&\|V-\ml{\dU}_+\|_{1,\al;\dN_+}
+\sum_{i=2}^3\|q_i-\p_{y_i}\dot\xi\|_{1,\al;\ml C}
\leq C\si^2.
\end{aligned}
\end{equation}
\end{proposition}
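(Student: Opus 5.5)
The plan is to collect the estimates from Steps~1--4 and track which constants depend on $M$ and which do not. Everything then rests on one separation. The $C^{2,\al}$ bounds may carry constants $C(M)$, but these can be absorbed by $\si$. The $C^{1,\al}$ comparison with the linear solution must be $O(\si^2)$, with a constant that may depend on $M$, because $\si^2\leq\si^{3/2}$ only after $\si$ is made small depending on $M$.

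\emph{Self-map in $\mk X_{M\si}$.} Fix $(\widehat V,\widehat{\bm q})\in\mk K$. By Lemma~\ref{lem_it_pos}, $|\ddot\xi^*|\leq r_*\si$ with $r_*$ independent of $M$. Step~1 gives $\|V_4\|_{2,\al}\leq C\|\ddot g_4\|_{2,\al}$. For the velocity, \eqref{est_it_vel} gives $\|V_i\|_{2,\al}\leq C\si$. To obtain the bound $M\si$, I would not use these constants directly, since they may depend on $M$. Instead I will expand each datum as (linear part) $+$ (quadratic remainder). The linear parts are $V_{1-}$, $\si\Phi$, $\si p_{ex}$, together with the linear parts $\ddot g_1,\ddot g_4,\ddot g_e$ and the leading term of $\ml F_0$. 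These are bounded by $C_1\si$ with $C_1$ independent of $M$, using Proposition~\ref{prop_up}. The remainders $\ml R_i$, $\ml R_e$ and the coefficient and coordinate corrections in $\ml F_0$ and $\ml G_i$ are bounded by $C(M)\si^2$. Hence
\[
\|V\|_{2,\al;\dN_+}\leq C_1\si+C(M)\si^2 .
\]
For the shock derivatives, \eqref{it_slopes} gives
\[
\|q_i\|_{2,\al;\ml C}\leq C\bigl(\|V\|_{2,\al;\dN_+}+\|V_-\|_{2,\al;\ml N}\bigr)+C(M)\si^2 .
\]
We then choose $M>2C_1'$, where $C_1'$ collects these $M$-independent constants, and take $\si$ small so that $C(M)\si\leq C_1'$. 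This gives the $\mk X_{M\si}$ bound. The wall conditions \eqref{it_wall2}, \eqref{it_wall3} and \eqref{it_shape_wall} follow from the reflection statements in Lemma~\ref{lem_it_dc}, the entropy wall identities, and Step~4. The compatibility condition \eqref{it_shape_comp} is \eqref{it_slope_curl}. Note that $V$ here is the velocity from \eqref{it_dc}, with $\Pi$ discarded.

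\emph{Closeness to the linear solution.} The second line of \eqref{est_iter_output} follows from \eqref{est_it_S}, with $|\ddot\xi^*|\leq r_*\si$, which gives $\|V_4-\dS_+\|_{1,\al}\leq C\si^2$, and from the second estimate in \eqref{est_it_vel}. For the slopes, subtract \eqref{def_lin_shock_shape} from \eqref{it_slopes}:
\[
q_i-\p_{y_i}\dot\xi=a_0(V_i-\du_{i+})(\bxi^*,\cdot)-a_0\bigl(V_{i-}(\widehat\xi,\cdot)-\du_{i-}(\bxi^*,\cdot)\bigr)+\ml R_i .
\]
The middle term is split, as in the proof of Lemma~\ref{lem_it_pos}, into $V_{i-}-\du_{i-}=O(\si^2)$ by \eqref{up_quad_est}, plus a shift $\du_{i-}(\widehat\xi)-\du_{i-}(\bxi^*)=O(\si\|\widehat\xi-\bxi^*\|_{1,\al})=O(\si^2)$, using $|\ddot\xi^*|\leq r_*\si$. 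Together with \eqref{rem_slopes} this gives $C\si^2$. Choosing $\si_2$ so that $C(M)\si_2^{1/2}\leq 1$ puts the output in $\mk K$.

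\emph{Main obstacle.} The main difficulty is the bookkeeping that keeps the leading constant $C_1$ independent of $M$. The source $\ml F_0$ contains $(\bar a_{ij}-\widehat a_{ij})\p\widehat V$, which is bounded by $C\,M\si\cdot M\si$; this is quadratic and harmless. It also contains the coordinate term $(\p_{y_i}-\ml D_i^{\widehat\xi})\widehat V$, which involves $|\ddot\xi^*|\,\|\widehat V\|_{2,\al}\leq r_*\si\cdot M\si$. This is again quadratic only because $r_*$ was chosen independently of $M$ in Lemma~\ref{lem_it_pos}. I would therefore verify carefully that every term multiplying $\|\widehat V\|_{2,\al}$ carries an extra factor $\si$ whose constant does not involve $M$.
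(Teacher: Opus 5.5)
Your proposal is correct and follows essentially the same route as the paper. The paper also writes the output bounds as $C_0\si+C_1(1+M)^2\si^2$, with $C_0$ independent of $M$, and $C_1(1+M)^2\si^2$; it then takes $M>2C_0$, shrinks $\si$, and handles the slopes with the same splitting of $V_{i-}(\widehat\xi,\cdot)-\du_{i-}(\bxi^*,\cdot)$. One minor correction to your ``main obstacle'' remark: the term $|\ddot\xi^*|\,\|\widehat V\|_{2,\al}\le r_*M\si^2$ would be absorbable even if $r_*$ depended on $M$, because $\si$ is fixed after $M$; all that is needed is that every occurrence of $\|\widehat V\|_{2,\al}$ in the output $C^{2,\al}$ bound carries an extra $O(\si)$ factor, and that factor's constant may depend on $M$.
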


\begin{proof}
The position bound follows from Lemma~\ref{lem_it_pos}.
The entropy formula and \eqref{est_it_S}, together with
\eqref{est_it_vel} and $V_5=\dB_+=0$, give the bounds for $V$.
For the shock derivatives, \eqref{it_slopes} first gives
\begin{equation*}
\sum_{i=2}^3\|q_i\|_{2,\al;\ml C}
\leq C\left(
\sum_{i=2}^3\|V_i\|_{2,\al;\dN_+}
+\|V_-\|_{2,\al;\ml N}
+\sum_{i=2}^3\|\ml R_i\|_{2,\al;\ml C}\right)
\leq C\si.
\end{equation*}
Subtracting the linear shock relations in
\eqref{def_lin_shock_shape}, we obtain, for $i=2,3$,
\begin{equation*}
\begin{aligned}
q_i(y')-\p_{y_i}\dot\xi(y')
={}&a_0(V_i-\du_{i+})(\bxi^*,y')
-a_0(V_{i-}-\du_{i-})(\widehat\xi(y'),y')\\
&-a_0\int_{\bxi^*}^{\widehat\xi(y')}
\p_{x_1}\du_{i-}(s,y')\,ds+\ml R_i(y').
\end{aligned}
\end{equation*}
Consequently,
\begin{equation}\label{est_it_slopes}
\begin{aligned}
&\sum_{i=2}^3\|q_i-\p_{y_i}\dot\xi\|_{1,\al;\ml C}\\
\leq& C\Bigl(
\sum_{i=2}^3\|V_i-\du_{i+}\|_{1,\al;\dN_+}
+\|V_- -\ml{\dU}_-\|_{1,\al;\ml N}\\
&+
\|\widehat\xi-\bxi^*\|_{1,\al;\ml C}
\|\ml{\dU}_-\|_{2,\al;\ml N}
+\sum_{i=2}^3\|\ml R_i\|_{1,\al;\ml C}\Bigr)\\
\leq& C\si^2.
\end{aligned}
\end{equation}
The preceding estimates, together with \eqref{it_space}
and \eqref{it_low_bound}, yield
\begin{equation}\label{raw_iter_output}
\begin{aligned}
&\|V\|_{2,\al;\dN_+}
+\sum_{i=2}^3\|q_i\|_{2,\al;\ml C}
\leq C_0\si+C_1(1+M)^2\si^2,\\
&\|V-\ml{\dU}_+\|_{1,\al;\dN_+}
+\sum_{i=2}^3\|q_i-\p_{y_i}\dot\xi\|_{1,\al;\ml C}
\leq C_1(1+M)^2\si^2,
\end{aligned}
\end{equation}
where $C_0,C_1$ are independent of $M$ and $\si$. Take $C_0$
also larger than the constant in \eqref{est_lin_approx}.
Choose $M>2C_0$ and then decrease $\si_2$ so that
\begin{equation*}
C_1(1+M)^2\si\leq\frac M2,
\qquad C_1(1+M)^2\si^2\leq\si^{3/2}.
\end{equation*}
The transport equations, Lemma~\ref{lem_it_dc}, and
\eqref{it_slope_curl} preserve the wall conditions and the
derivative relation defining $\mk X$. Together with
\eqref{raw_iter_output}, this proves
\eqref{iter_self_map}.
\end{proof}

The update therefore preserves the iteration class and its wall
conditions. It remains to prove convergence and to check
that the auxiliary function introduced in the velocity problem
vanishes at the limit.

\subsection{Convergence and completion of the proof}
\label{sub:contraction}

The contraction estimate uses one derivative less than the bounds
for the iteration map. On $\mk K$, define the distance
\begin{equation}\label{it_distance}
d\bigl((V^{(1)},\bm q^{(1)}),(V^{(2)},\bm q^{(2)})\bigr)
:=\|V^{(1)}-V^{(2)}\|_{1,\al;\dN_+}
+\sum_{i=2}^3\|q_i^{(1)}-q_i^{(2)}\|_{1,\al;\ml C}.
\end{equation}
The set $\mk K$ is complete with respect to the distance
\eqref{it_distance}. By \eqref{est_lin_approx} and the
choice of $M$,
\begin{equation*}
\bigl(\ml{\dU}_+,
(\p_{y_2}\dot\xi,\p_{y_3}\dot\xi)\bigr)\in\mk K,
\end{equation*}
so $\mk K$ is nonempty.

\begin{lemma}\label{lem_contraction}
For sufficiently small $\si>0$, the iteration map satisfies
\begin{equation}\label{contraction_est}
\begin{split}
d\bigl(\ml T(\widehat V^{(1)},\widehat{\bm q}^{(1)}),
\ml T(\widehat V^{(2)},\widehat{\bm q}^{(2)})\bigr)
\leq C\si\,
d\bigl((\widehat V^{(1)},\widehat{\bm q}^{(1)}),
(\widehat V^{(2)},\widehat{\bm q}^{(2)})\bigr)
\end{split}
\end{equation}
for all $(\widehat V^{(j)},\widehat{\bm q}^{(j)})\in\mk K$, $j=1,2$.
\end{lemma}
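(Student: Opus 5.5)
We follow the construction of $\ml T$ step by step, estimating differences of all quantities in norms one derivative lower than those used for the self-map, and using the uniform $C^{2,\al}$ bounds from $\mk X_{M\si}$ to absorb the loss of derivatives. Write $\delta:=d\bigl((\widehat V^{(1)},\widehat{\bm q}^{(1)}),(\widehat V^{(2)},\widehat{\bm q}^{(2)})\bigr)$ and denote differences by $\delta f:=f^{(1)}-f^{(2)}$.

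The first and most delicate step is the displacement. Let $\ddot\xi^{*(j)}$ be the roots given by Lemma~\ref{lem_it_pos}. Then
\begin{equation*}
0=\ml I^{(1)}(\ddot\xi^{*(1)})-\ml I^{(2)}(\ddot\xi^{*(2)})
=\bigl(\ml I^{(1)}-\ml I^{(2)}\bigr)(\ddot\xi^{*(1)})
+\ml I^{(2)}(\ddot\xi^{*(1)})-\ml I^{(2)}(\ddot\xi^{*(2)}).
\end{equation*}
By \eqref{pos_deriv}, the last difference is bounded below by $c_*\si|\delta\ddot\xi^*|$. So it suffices to show that
\begin{equation*}
|\ml I^{(1)}-\ml I^{(2)}|\leq C\si^2\delta
\end{equation*}
at a fixed displacement, which gives $|\delta\ddot\xi^*|\leq C\si\delta$. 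To obtain this, use the expansion \eqref{pos_exp}. Its principal term $\si(L-\bxi^*)(L-\bxi^*-\ddot\xi^*)^{-1}(F(\bxi^*+\ddot\xi^*)-F(\bxi^*))$ does not depend on the input, so only $\delta R_I$ has to be estimated. Each piece of $R_I$ is either quadratic in the small quantities, or a product of $\si$ or $\ddot\xi^*$ with input-dependent terms ($\ml R_1,\ml R_4,\ml R_e$, $\widehat\xi-\bxi^*-\ddot\xi^*$, $\bar a_{ij}-\widehat a_{ij}$, the $\widehat b_i$). The difference of a quadratic remainder is bounded by (size $\si$)$\times\delta$, using the $C^{1,\al}$ input differences and the $C^{2,\al}$ bounds. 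Since $R_I$ enters at order $\si^2$, its difference comes with one extra factor of $\si$. The one exception is the term in \eqref{pos_key_error}. Its difference is $|\ddot\xi^*|\cdot\|\delta\widehat V\|_{1,\al}\leq C\si\delta$, which is again acceptable. The obstacle here is bookkeeping: we must make sure no term contributes $\si\delta$ (rather than $\si^2\delta$) to $\delta\ml I$. I expect the integral structure to save us, since $\int_{\dN_+}\p_{y_1}(\cdot)$ only involves traces.

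Next, the entropy and vorticity. $\delta V_4$ satisfies a transport equation with coefficients $\widehat b_i^{(1)}$. Its source is $-\delta\widehat b_i\,\p_{y_i}V_4^{(2)}$ and its boundary datum is $\delta\ddot g_4$. The datum is controlled by $\si\bigl(\delta+|\delta\ddot\xi^*|\bigr)$ via the expansion \eqref{it_S_data}, since $V_{1-}$ is fixed and $C^{2,\al}$ and $\widehat\xi$ depends on $\widehat{\bm q}$ and $\ddot\xi^*$. The source is controlled by $C\si\delta$, because $\|\delta\widehat b_i\|_{1,\al}\leq C\delta$ and $\|V_4\|_{2,\al}\leq C\si$. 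Hence $\|\delta V_4\|_{1,\al}\leq C\si\delta$. The vorticity formula \eqref{om_formula} has $\ddot g_\om$ and $f_\om$ quadratic, so $\|\delta\om_1\|_{0,\al}\leq C\si\delta$. This is only a $C^{0,\al}$ estimate, because $\ddot g_\om$ contains $\p\ml R_i$ and $\p_{y_1}\widehat V$; that suffices, since Lemma~\ref{lem_it_dc} with $k=0$ only needs $C^{0,\al}$ sources.

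Finally, the velocity and the shock slopes. The pair $(\delta V_{1,2,3},\delta\Pi)$ solves \eqref{lin_mixed_dc} with data $\delta\ml F_0$, $\delta\ml G_i$, $\delta\ddot g_1$, $\delta\ddot g_e$. The compatibility condition \eqref{dc_compat} holds automatically, because each solution satisfies its own condition. Every term in \eqref{it_F0_split} and \eqref{it_G123} is a product of two small factors, or contains $\si\Phi$ composed with $X_{\widehat\xi}$. Therefore
\begin{equation*}
\|\delta\ml F_0\|_{0,\al}+\sum_{i=1}^3\|\delta\ml G_i\|_{0,\al}+\|\delta\ddot g_1\|_{1,\al}+\|\delta\ddot g_e\|_{1,\al}\leq C\si\bigl(\delta+|\delta\ddot\xi^*|\bigr),
\end{equation*}
where the $C^{2,\al}$ bounds of the inputs absorb the derivative in front of the differenced factor. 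Lemma~\ref{lem_it_dc} with $k=0$ then gives $\|\delta V\|_{1,\al}\leq C\si\delta$. The slopes follow from \eqref{it_slopes}: $\delta q_i=a_0\delta V_i(\bxi^*,\cdot)-a_0\delta\bigl(V_{i-}(\widehat\xi)\bigr)+\delta\ml R_i$, and the last two terms are $O(\si\delta)$ in $C^{1,\al}$. Summing all these bounds yields \eqref{contraction_est}.
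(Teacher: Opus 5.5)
Your overall route matches the paper's. You compare the two position functionals at a common displacement $r$, use $|\p_{\ddot\xi^*}\ml I|\geq c_*\si$ from \eqref{pos_deriv} with the mean value theorem, run the entropy and vorticity transport estimates one derivative down, apply Lemma~\ref{lem_it_dc} with $k=0$ to the velocity difference (whose compatibility condition is inherited), and close with \eqref{it_slopes}.

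The step you call delicate, however, is misdiagnosed. The bound $|\ml I^{(1)}(r)-\ml I^{(2)}(r)|\leq C\si^2\delta$, and hence $|\ddot\xi^*_1-\ddot\xi^*_2|\leq C\si\delta$, does not follow from your term-by-term analysis. A remainder that is quadratic in quantities of size $\si$ has Lipschitz constant of order $\si$, not $\si^2$. So $\int_{\ml C}(\ml R_1^{(1)}-\ml R_1^{(2)})\,dy'$, the difference of $\int_{\dN_+}(\bar a_{ij}-\widehat a_{ij})\p_{y_i}\widehat V_j\,d\y$, and the term in \eqref{pos_key_error} are each of order $\si\delta$. You note this yourself for the last term, which contradicts your target. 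The integral structure does not rescue it, and the bound fails in general. At fixed $r$ the shocks $\widehat\xi^{(j)}(\cdot\,;r)$ agree only at $(1,1)$. To leading order, $\ml I$ responds to a change $\eta$ of the shock by a linear functional of size $\si$ whose value at $\eta\equiv1$ is $\p_{\ddot\xi^*}\ml I\neq0$. Taking $\widehat\xi^{(1)}-\widehat\xi^{(2)}$ close to a nonzero constant away from the corner makes $\ml I^{(1)}(r)-\ml I^{(2)}(r)$ comparable to $\si\delta$. In other words, the solvability condition fixes an averaged shock position, so an $O(\delta)$ change of slopes moves $\ddot\xi^*=\xi(1,1)-\bxi^*$ by $O(\delta)$.

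You do not need more than this. Your estimates give $|\ml I^{(1)}(r)-\ml I^{(2)}(r)|\leq C\si\delta$, which is \eqref{pos_input_lip}. Dividing by $c_*\si$ gives $|\ddot\xi^*_1-\ddot\xi^*_2|\leq C\delta$, which is \eqref{diff_pos}. This suffices because the displacement enters only through $\widehat\xi$, and every occurrence of $\widehat\xi$ carries a factor of size $\si$:
\begin{itemize}
\item the traces $V_-(\widehat\xi(\cdot),\cdot)$ and $\si\Phi(\widehat\xi(\cdot),\cdot)$;
\item the composition $\si\Phi\circ X_{\widehat\xi}$;
\item the corrections $\ml D_i^{\widehat\xi}-\p_{y_i}$, which act only on quantities of size $\si$.
\end{itemize}
Hence $\|\widehat\xi^{(1)}-\widehat\xi^{(2)}\|_{2,\al;\ml C}\leq C\delta$ as in \eqref{diff_front}. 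Your later bound $C\si(\delta+|\ddot\xi^*_1-\ddot\xi^*_2|)$ is still $C\si\delta$, and the remaining steps go through as you describe.

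Two smaller corrections.
\begin{itemize}
\item The difference $R_I^{(1)}-R_I^{(2)}$ involves $\widetilde r_4$ and $r_e$, hence $V_4(L,\cdot)$. So the entropy comparison at fixed $r$ must precede the displacement step, which is the order the paper uses.
\item $\ml G_1,\ml G_2,\ml G_3$ and $\ddot g_e$ are not products of two small factors: they contain $\om_1$ and terms linear in $\p_{y'}V_4$ and $V_4(L,\cdot)$. Their differences are controlled by your bounds on $\|\om_1^{(1)}-\om_1^{(2)}\|_{0,\al;\dN_+}$ and $\|V_4^{(1)}-V_4^{(2)}\|_{1,\al;\dN_+}$, both of order $\si\delta$, exactly as in \eqref{diff_dc_data}.
\end{itemize}
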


\begin{proof}
Let
\begin{equation*}
\delta:=d\bigl((\widehat V^{(1)},\widehat{\bm q}^{(1)}),
(\widehat V^{(2)},\widehat{\bm q}^{(2)})\bigr),
\qquad
(V^{(j)},\bm q^{(j)})
:=\ml T(\widehat V^{(j)},\widehat{\bm q}^{(j)}).
\end{equation*}
We first compare the position conditions for these two inputs.
Fix a displacement $r$ in the interval given by
Lemma~\ref{lem_it_pos}. For $j=1,2$, define
\begin{equation*}
\widehat\xi^{(j)}(y_2,y_3;r)
:=\bxi^*+r
+\int_1^{y_2}\widehat q_2^{(j)}(s,1)\,ds
+\int_1^{y_3}\widehat q_3^{(j)}(y_2,s)\,ds.
\end{equation*}
Here the superscript $(j)$ identifies the input used in the construction.
In particular, we write
\begin{equation*}
\ddot g_k^{(j)}(y';r)
:=\ddot g_k\bigl(\widehat V^{(j)},\widehat{\bm q}^{(j)};
\ml U_-,\Phi;r\bigr)(y'),
\qquad k=1,4.
\end{equation*}
We use the same superscript for the entropy, coefficients, and remaining
data obtained from this input and displacement. All prescribed data,
including $p_{ex}$, are the same in the two constructions.

Subtracting the reconstruction formulas gives
\begin{equation*}
\|\widehat\xi^{(1)}(\cdot;r)-\widehat\xi^{(2)}(\cdot;r)
\|_{2,\al;\ml C}
\leq C\sum_{i=2}^3
\|\widehat q_i^{(1)}-\widehat q_i^{(2)}\|_{1,\al;\ml C}
\leq C\delta.
\end{equation*}
The upstream values satisfy
\begin{equation*}
\begin{aligned}
V_-\bigl(\widehat\xi^{(1)}(y';r),y'\bigr)
-V_-\bigl(\widehat\xi^{(2)}(y';r),y'\bigr)
&\quad=\int_{\widehat\xi^{(2)}(y';r)}^{\widehat\xi^{(1)}(y';r)}
\p_{x_1}V_-(s,y')\,ds.
\end{aligned}
\end{equation*}
This identity and the corresponding identity for $\Phi$ yield
\begin{equation*}
\begin{split}
&\|V_-(\widehat\xi^{(1)}(\cdot;r),\cdot)
-V_-(\widehat\xi^{(2)}(\cdot;r),\cdot)\|_{1,\al;\ml C}
+\si\|\Phi(\widehat\xi^{(1)}(\cdot;r),\cdot)
-\Phi(\widehat\xi^{(2)}(\cdot;r),\cdot)
\|_{1,\al;\ml C}\\
\leq& C\bigl(\|V_-\|_{2,\al;\ml N}
+\si\|\Phi\|_{2,\al;\ml N}\bigr)
\|\widehat\xi^{(1)}(\cdot;r)-\widehat\xi^{(2)}(\cdot;r)
\|_{1,\al;\ml C}\\
\leq& C\si\delta.
\end{split}
\end{equation*}
The shock remainders are of second order in the perturbations.
Together with \eqref{it_S_coeff}, this gives
\begin{equation*}
\begin{split}
&\sum_{k=1,4}\|\ddot g_k^{(1)}(\cdot;r)
-\ddot g_k^{(2)}(\cdot;r)\|_{1,\al;\ml C}
\leq C\si\delta,\\
&\sum_{i=2}^3\|\widehat b_i^{(1)}(\cdot;r)
-\widehat b_i^{(2)}(\cdot;r)\|_{1,\al;\dN_+}
\leq C\delta.
\end{split}
\end{equation*}

For the fixed $r$, subtracting the two entropy equations yields
\begin{equation}\label{diff_S_eq}
\begin{split}
&\p_{y_1}(V_4^{(1)}-V_4^{(2)})
+\sum_{i=2}^3\widehat b_i^{(1)}
\p_{y_i}(V_4^{(1)}-V_4^{(2)})=-\sum_{i=2}^3
(\widehat b_i^{(1)}-\widehat b_i^{(2)})\p_{y_i}V_4^{(2)}
\end{split}
\end{equation}
in $\dN_+$ endowed with
\begin{equation*}
(V_4^{(1)}-V_4^{(2)})(\bxi^*,y';r)
=\ddot g_4^{(1)}(y';r)-\ddot g_4^{(2)}(y';r).
\end{equation*}
The dependence on $r$ is omitted in \eqref{diff_S_eq}.
The transport estimate gives
\begin{equation*}
\begin{split}
&\|V_4^{(1)}(\cdot;r)-V_4^{(2)}(\cdot;r)\|_{1,\al;\dN_+}\\
\leq& C\Bigl(
\|\ddot g_4^{(1)}(\cdot;r)-\ddot g_4^{(2)}(\cdot;r)
\|_{1,\al;\ml C}\\
&+\sum_{i=2}^3\|\widehat b_i^{(1)}(\cdot;r)
-\widehat b_i^{(2)}(\cdot;r)\|_{1,\al;\dN_+}
\|V_4^{(2)}(\cdot;r)\|_{2,\al;\dN_+}
\Bigr)\\
\leq& C\si\delta.
\end{split}
\end{equation*}
The exit data in \eqref{it_exit_data} and \eqref{it_F0_split}
therefore satisfy
\begin{equation*}
\begin{split}
&\|\ddot g_e^{(1)}(\cdot;r)-\ddot g_e^{(2)}(\cdot;r)
\|_{1,\al;\ml C}
+\|\ml F_0^{(1)}(\cdot;r)-\ml F_0^{(2)}(\cdot;r)
\|_{0,\al;\dN_+}\\
\leq& C\Bigl(
\|V_4^{(1)}(\cdot;r)-V_4^{(2)}(\cdot;r)\|_{1,\al;\dN_+}
+\si\delta\Bigr)\\
\leq& C\si\delta.
\end{split}
\end{equation*}
Let $\ml I^{(j)}(r)$ denote the functional \eqref{pos_fun}
for the $j$th input. By its integral formula,
\begin{equation}\label{pos_input_lip}
\begin{split}
|\ml I^{(1)}(r)-\ml I^{(2)}(r)|
\leq& C\Bigl(
\sum_{i=1,e}^{}\|\ddot g_i^{(1)}-\ddot g_i^{(2)}\|_{0,\al;\ml C}
+\|\ml F_0^{(1)}-\ml F_0^{(2)}\|_{0,\al;\dN_+}\Bigr)\\
\leq& C\si\delta.
\end{split}
\end{equation}
All quantities on the right are evaluated at $r$. Let $\ddot\xi_j^*$ be the zero of $\ml I^{(j)}$. Since
\begin{equation*}
\ml I^{(2)}(\ddot\xi_1^*)-\ml I^{(2)}(\ddot\xi_2^*)
=\ml I^{(2)}(\ddot\xi_1^*)-\ml I^{(1)}(\ddot\xi_1^*),
\end{equation*}
the mean value theorem, \eqref{pos_deriv}, and
\eqref{pos_input_lip} yield
\begin{equation}\label{diff_pos}
c_*\si|\ddot\xi_1^*-\ddot\xi_2^*|
\leq|\ml I^{(2)}(\ddot\xi_1^*)-\ml I^{(1)}(\ddot\xi_1^*)|
\leq C\si\delta,
\qquad
|\ddot\xi_1^*-\ddot\xi_2^*|\leq C\delta.
\end{equation}

We now evaluate all quantities for the $j$th input at its selected
displacement $\ddot\xi_j^*$ and omit the position argument. In particular,
$\widehat\xi^{(j)}(y'):=\widehat\xi^{(j)}(y';\ddot\xi_j^*)$.
The reconstruction formula and \eqref{diff_pos} give
\begin{equation}\label{diff_front}
\|\widehat\xi^{(1)}-\widehat\xi^{(2)}\|_{2,\al;\ml C}
\leq |\ddot\xi_1^*-\ddot\xi_2^*|
+C\sum_{i=2}^3
\|\widehat q_i^{(1)}-\widehat q_i^{(2)}\|_{1,\al;\ml C}
\leq C\delta.
\end{equation}
The same integral identity for the upstream values therefore gives
\begin{equation}\label{diff_up_trace}
\begin{split}
&\|V_-(\widehat\xi^{(1)}(\cdot),\cdot)
-V_-(\widehat\xi^{(2)}(\cdot),\cdot)\|_{1,\al;\ml C}
+\si\|\Phi(\widehat\xi^{(1)}(\cdot),\cdot)
-\Phi(\widehat\xi^{(2)}(\cdot),\cdot)
\|_{1,\al;\ml C}\\
\leq& C\si\|\widehat\xi^{(1)}-\widehat\xi^{(2)}\|_{1,\al;\ml C}
\leq C\si\delta.
\end{split}
\end{equation}
Equation \eqref{diff_S_eq} and its boundary condition remain
valid for the selected displacements. Repeating the preceding estimates
using \eqref{diff_front}, we obtain
\begin{equation}\label{diff_S_data}
\begin{split}
&\|V_4^{(1)}-V_4^{(2)}\|_{1,\al;\dN_+}
+\|\ddot g_1^{(1)}-\ddot g_1^{(2)}\|_{1,\al;\ml C}
+\|\ddot g_e^{(1)}-\ddot g_e^{(2)}\|_{1,\al;\ml C}
\leq C\si\delta,\\
&\sum_{i=2}^3\|\widehat b_i^{(1)}-\widehat b_i^{(2)}
\|_{1,\al;\dN_+}\leq C\delta.
\end{split}
\end{equation}

Next, subtracting \eqref{it_om1_eq} for the two constructions
gives
\begin{equation}\label{diff_om_eq}
\begin{split}
&\p_{y_1}(\om_1^{(1)}-\om_1^{(2)})
+\sum_{i=2}^3\widehat b_i^{(1)}
\p_{y_i}(\om_1^{(1)}-\om_1^{(2)})
+\mu^{(1)}(\om_1^{(1)}-\om_1^{(2)})\\
=&f_\om^{(1)}-f_\om^{(2)}
-\sum_{i=2}^3(\widehat b_i^{(1)}-\widehat b_i^{(2)})
\p_{y_i}\om_1^{(2)}
-(\mu^{(1)}-\mu^{(2)})\om_1^{(2)}
\end{split}
\end{equation}
in $\dN_+$, endowed with
\begin{equation*}
(\om_1^{(1)}-\om_1^{(2)})(\bxi^*,y')
=\ddot g_\om^{(1)}(y')-\ddot g_\om^{(2)}(y').
\end{equation*}
By \eqref{it_om_data}, \eqref{it_Fomega}, and the preceding
estimates, the boundary data and source term satisfy
\begin{equation*}
\begin{split}
&\|\ddot g_\om^{(1)}-\ddot g_\om^{(2)}\|_{0,\al;\ml C}
+\|f_\om^{(1)}-f_\om^{(2)}\|_{0,\al;\dN_+}\\
\leq& C\si\Bigl(
\|\widehat V^{(1)}-\widehat V^{(2)}\|_{1,\al;\dN_+}
+\|\widehat\xi^{(1)}-\widehat\xi^{(2)}\|_{2,\al;\ml C}
+\|V_4^{(1)}-V_4^{(2)}\|_{1,\al;\dN_+}\Bigr)\\
\leq& C\si\delta,
\end{split}
\end{equation*}
and
\begin{equation*}
\|\mu^{(1)}-\mu^{(2)}\|_{0,\al;\dN_+}\leq C\delta.
\end{equation*}
The transport estimate for \eqref{diff_om_eq}, together with
the bound for $\om_1^{(2)}$ in Step~2, gives
\begin{equation}\label{diff_vort}
\begin{split}
&\|\om_1^{(1)}-\om_1^{(2)}\|_{0,\al;\dN_+}\\
\leq &C\Bigl(
\|\ddot g_\om^{(1)}-\ddot g_\om^{(2)}\|_{0,\al;\ml C}
+\|f_\om^{(1)}-f_\om^{(2)}\|_{0,\al;\dN_+}\\
&
+\Bigl(\sum_{i=2}^3
\|\widehat b_i^{(1)}-\widehat b_i^{(2)}\|_{0,\al;\dN_+}
+\|\mu^{(1)}-\mu^{(2)}\|_{0,\al;\dN_+}\Bigr)
\|\om_1^{(2)}\|_{1,\al;\dN_+}\Bigr)
\leq C\si\delta.
\end{split}
\end{equation}

Subtracting the two linear velocity problems gives
\begin{equation}\label{diff_vel_eqs}
\begin{split}
&(1-\bM_+^2)\p_{y_1}(V_1^{(1)}-V_1^{(2)})
+\sum_{i=2}^3\p_{y_i}(V_i^{(1)}-V_i^{(2)})
=\ml F_0^{(1)}-\ml F_0^{(2)},\\
&\p_{y_2}(V_3^{(1)}-V_3^{(2)})
-\p_{y_3}(V_2^{(1)}-V_2^{(2)})
+\p_{y_1}(\Pi^{(1)}-\Pi^{(2)})=\ml G_1^{(1)}-\ml G_1^{(2)},\\
&\p_{y_3}(V_1^{(1)}-V_1^{(2)})
-\p_{y_1}(V_3^{(1)}-V_3^{(2)})
+\p_{y_2}(\Pi^{(1)}-\Pi^{(2)})=\ml G_2^{(1)}-\ml G_2^{(2)},\\
&\p_{y_1}(V_2^{(1)}-V_2^{(2)})
-\p_{y_2}(V_1^{(1)}-V_1^{(2)})
+\p_{y_3}(\Pi^{(1)}-\Pi^{(2)})=\ml G_3^{(1)}-\ml G_3^{(2)}
\end{split}
\end{equation}
in $\dN_+$, endowed with
\begin{equation}\label{diff_vel_bc}
\begin{cases}
V_1^{(1)}-V_1^{(2)}=\ddot g_1^{(1)}-\ddot g_1^{(2)},
&\text{on }\Ga_s^*,\\
V_1^{(1)}-V_1^{(2)}=\ddot g_e^{(1)}-\ddot g_e^{(2)},
&\text{on }\Ga_e,\\
V_i^{(1)}-V_i^{(2)}=0,
&\text{on }\Ga_i^\pm,\quad i=2,3,\\
\p_{y_1}(\Pi^{(1)}-\Pi^{(2)})=0,
&\text{on }\Ga_s^*,\\
\Pi^{(1)}-\Pi^{(2)}=0,
&\text{on }\Ga_e\cup\Ga_2^\pm\cup\Ga_3^\pm.
\end{cases}
\end{equation}
The integral condition follows by subtracting the two position
equations. Substituting \eqref{it_om23} into \eqref{it_G123},
and using \eqref{it_F0_split}, \eqref{diff_front}, and the
preceding estimates, gives
\begin{equation}\label{diff_dc_data}
\begin{split}
&\|\ml F_0^{(1)}-\ml F_0^{(2)}\|_{0,\al;\dN_+}
+\sum_{i=1}^3\|\ml G_i^{(1)}-\ml G_i^{(2)}\|_{0,\al;\dN_+}\\
\leq& C\Bigl(
\|\om_1^{(1)}-\om_1^{(2)}\|_{0,\al;\dN_+}
+\|V_4^{(1)}-V_4^{(2)}\|_{1,\al;\dN_+}\\
&
+\si\|\widehat V^{(1)}-\widehat V^{(2)}\|_{1,\al;\dN_+}
+\si\|\widehat\xi^{(1)}-\widehat\xi^{(2)}\|_{2,\al;\ml C}
\Bigr)\\
\leq& C\si\delta.
\end{split}
\end{equation}
Applying Lemma~\ref{lem_it_dc} with $k=0$ to
\eqref{diff_vel_eqs} with \eqref{diff_vel_bc} yields
\begin{equation}\label{diff_vel}
\begin{split}
&\sum_{i=1}^3\|V_i^{(1)}-V_i^{(2)}\|_{1,\al;\dN_+}
+\|\Pi^{(1)}-\Pi^{(2)}\|_{1,\al;\dN_+}\\
\leq& C\Bigl(
\|\ml F_0^{(1)}-\ml F_0^{(2)}\|_{0,\al;\dN_+}
+\sum_{i=1}^3\|\ml G_i^{(1)}-\ml G_i^{(2)}\|_{0,\al;\dN_+}\\
&
+\|\ddot g_1^{(1)}-\ddot g_1^{(2)}\|_{1,\al;\ml C}
+\|\ddot g_e^{(1)}-\ddot g_e^{(2)}\|_{1,\al;\ml C}
\Bigr)\\
\leq& C\si\delta.
\end{split}
\end{equation}
Together with \eqref{diff_S_data} and $V_5^{(j)}=0$, this controls
the full difference $V^{(1)}-V^{(2)}$ in $C^{1,\al}$.

Finally, let $\ml R_i^{(j)}(y')$, $i=2,3$, denote the shock remainder
in \eqref{it_slopes} evaluated at the $j$th input and its selected
displacement. Subtracting the two slope relations gives, for $i=2,3$,
\begin{equation*}
\begin{split}
q_i^{(1)}(y')-q_i^{(2)}(y')
={}&a_0\bigl(V_i^{(1)}(\bxi^*,y')-V_i^{(2)}(\bxi^*,y')\bigr)\\
&-a_0\bigl(V_{i-}(\widehat\xi^{(1)}(y'),y')
-V_{i-}(\widehat\xi^{(2)}(y'),y')\bigr)
+\ml R_i^{(1)}(y')-\ml R_i^{(2)}(y').
\end{split}
\end{equation*}
Since the remainders are of second order in the perturbations,
\eqref{diff_up_trace} gives
\begin{equation*}
\sum_{i=2}^3\|\ml R_i^{(1)}-\ml R_i^{(2)}\|_{1,\al;\ml C}
\leq C\si\delta.
\end{equation*}
Together with the velocity estimate on $y_1=\bxi^*$, this yields
\begin{equation}\label{diff_slopes}
\begin{split}
\sum_{i=2}^3\|q_i^{(1)}-q_i^{(2)}\|_{1,\al;\ml C}
\leq& C\Bigl(
\sum_{i=2}^3\|V_i^{(1)}-V_i^{(2)}\|_{1,\al;\dN_+}
+\si\|\widehat\xi^{(1)}-\widehat\xi^{(2)}\|_{1,\al;\ml C}\\
&+\sum_{i=2}^3\|\ml R_i^{(1)}-\ml R_i^{(2)}\|_{1,\al;\ml C}
\Bigr)\\
\leq& C\si\delta.
\end{split}
\end{equation}
This proves \eqref{contraction_est}.
\end{proof}

Choose $M$ as in Proposition~\ref{prop_iter_map} and then take
$\si$ small enough that $C\si<1/2$. The contraction mapping theorem
gives a unique fixed point
\begin{equation}\label{fix_point}
(V,\bm q)\in\mk K,
\qquad \ml T(V,\bm q)=(V,\bm q).
\end{equation}
Let $\ddot\xi^*$ be its displacement determined by
\eqref{pos_eq}. The shock used in Steps~1 to~3 and the shock
recovered in Step~4 have the same value $\bxi^*+\ddot\xi^*$ at $(1,1)$
and the same derivatives $q_2,q_3$. Hence
\begin{equation}\label{fix_front_eq}
\widehat\xi=\xi\quad\text{in }\ml C.
\end{equation}

It remains to show that $\Pi$ vanishes, so that the fixed point
satisfies the original velocity equations. At the fixed point,
$\widehat V=V$ and \eqref{it_dc} gives
\begin{equation}\label{fix_Pi_gradient}
\begin{split}
\p_{y_1}\Pi
&=\om_1-(\ml D_2^\xi u_3-\ml D_3^\xi u_2),\\
\p_{y_2}\Pi
&=\om_2-(\ml D_3^\xi u_1-\ml D_1^\xi u_3),\\
\p_{y_3}\Pi
&=\om_3-(\ml D_1^\xi u_2-\ml D_2^\xi u_1).
\end{split}
\end{equation}
Apply $\ml D_i^\xi$ to the $i$th equality and add the three results.
Substituting the algebraic formulas \eqref{it_om23} and using
the equation \eqref{it_om1_eq} for $\om_1$, we obtain
\begin{equation}\label{fix_Pi_eq}
\begin{split}
&\frac{L-\bxi^*}{L-\xi(y')}\p_{y_1}^2\Pi
+\p_{y_2}^2\Pi+\p_{y_3}^2\Pi-\frac{L-y_1}{L-\xi(y')}
\sum_{i=2}^3\p_{y_i}\xi(y')\p_{y_1y_i}\Pi=0
\end{split}
\end{equation}
in $\dN_+$. This equation is uniformly elliptic for sufficiently small $\si$.
The maximum principle applied to \eqref{fix_Pi_eq} with the
homogeneous boundary conditions \eqref{it_Pi_bc} yields
\begin{equation}\label{fix_Pi_zero}
\Pi\equiv0\qquad\text{in }\dN_+.
\end{equation}
The velocity equations now have exactly the form of the nonlinear
equations in Subsection~\ref{sub:fixed}. The entropy
equation, the equation for $\om_1$, the algebraic formulas for
$\om_2,\om_3$, and the boundary conditions agree with that formulation
because $\widehat V=V$ and $\widehat\xi=\xi$.
At the fixed point, \eqref{shock_slopes} agrees with
\eqref{fix_slope_exp}, giving
\begin{equation*}
\p_{y_i}\xi(y')
=\ml S_i\bigl(\ml U(\bxi^*,y'),\ml U_-(\xi(y'),y');
\Phi(\xi(y'),y')\bigr),\qquad i=2,3.
\end{equation*}
Together with the boundary functions $g_1,g_4$ and $B=\bB$,
these are all the conditions in \eqref{fix_RH}. Thus the fixed
point solves the nonlinear problem in the fixed domain.

\begin{proof}[Proof of Theorem~\ref{thm_fix_domain}]
For sufficiently small $\si$, Proposition~\ref{prop_iter_map}
and Lemma~\ref{lem_contraction} show that $\ml T$ maps the
complete metric space $\mk K$ into itself and is a contraction.
The contraction mapping theorem therefore gives a unique fixed
point, which solves the nonlinear problem on the fixed domain
by the preceding verification.
The stated regularity and estimates follow from
\eqref{est_iter_output} and the reconstruction formula for the
shock. The wall compatibility conditions follow from the
construction.
\end{proof}

\begin{proof}[Proof of Theorem~\ref{thm_nonlin}]
Returning to the original coordinates by the inverse of
\eqref{shock_flatten}, the solution in
Theorem~\ref{thm_fix_domain}, together with the upstream
solution from Proposition~\ref{prop_up}, solves
Problem~\ref{prob1} by Lemma~\ref{lem_equivalence}.
For sufficiently small $\si$, the coordinate transformation
preserves the stated regularity and estimates.
The supersonic and subsonic conditions, as well as the entropy
condition, follow from \eqref{bg_shock_cond} and the smallness
of the perturbation.
\end{proof}

\textbf{Acknowledgment.}
Weng is supported by the National Natural Science Foundation of China under grants 12571240 and 12221001. Yang is supported by the National Natural Science Foundation of China under grant 12601414. The first author would like to thank Prof. Fang Beixiang at Shanghai Jiao Tong University for discussions on the stability of steady transonic shocks in flat nozzles with general sections.

\textbf{Data Availability Statement.}
Data sharing is not applicable because no data sets were generated or
analyzed in this study.

\textbf{Conflict of interest.}
The authors declare that they have no conflict of interest.

\bibliographystyle{amsplain}
\bibliography{RefEuler}

\end{document}